\documentclass[12pt]{article}
\usepackage{bbm}
\usepackage{graphicx}
\graphicspath{ {./images/} }
\usepackage{wrapfig}
\usepackage[mathscr]{euscript}
\usepackage{subcaption}
\usepackage{hyperref}
\usepackage{epstopdf}
\usepackage{enumerate}
\usepackage{amsmath,amsfonts,amssymb,amsthm,epsfig,epstopdf,titling,url,array}
\usepackage{xcolor}
\usepackage{framed}
\usepackage[utf8]{inputenc}
\usepackage[english]{babel}
\usepackage{xparse}
\usepackage{authblk}
\usepackage{tikz}
\usepackage{placeins}
\usepackage{float}
\usetikzlibrary{shapes.geometric}
\usepackage[margin=2.4cm]{geometry}
\usepackage{authblk}
\usepackage{booktabs} % For professional table lines
\newtheorem{lem}{Lemma}[section]
\newtheorem{prop}{Proposition}[section]

\newtheorem{rem}{Remark}[section]

\newtheorem{maintheorem}{Theorem}

\date{}
\begin{document}
	\title{Chebyshev's method applied to polynomials with rotational symmetry}
	%\iffalse
	%-------adding authors--------------
	\author[1]{Tarakanta Nayak 
		\footnote{tnayak@iitbbs.ac.in}}
	\author[1]{Pooja Phogat  \footnote{Corresponding author, poojaphogat174acad@gmail.com}}
	\affil[1]{Department of Mathematics, 
		Indian Institute of Technology Bhubaneswar, India}
	\date{}
	%------------------------------------
	%\fi
	\maketitle
	%	\begin{verbatim}\end{verbatim}\vspace{2.5cm}
	\begin{abstract}
We investigate the dynamics of Chebyshev's method applied to the polynomial family $p_n(z)=z(z^n-1)$ for $n>1$. The resulting map is denoted by  $C_n$. It is proved that the immediate basins corresponding to the non-zero roots are unbounded and simply connected. We also show that the Julia set of $C_n$ is connected. It is proved that the immediate basin of the root at the origin exhibits a different behavior: it is unbounded for $n\leq 16$ and bounded for $n\geq 17$. We establish that $C_n$ is convergent whenever $n\leq 16$ or $n$ is odd. Finally, we determine the symmetry group of $C_n$ and prove that it coincides with the symmetry group of the polynomial $p_n$, thereby confirming, for this family, a conjecture proposed by Nayak and Pal.
	\end{abstract}
	\textit{Keywords:}
	Chebyshev's method; Fatou and Julia sets; Bounded immediate basin; Rotational symmetry.\\
	AMS Subject Classification: 37F10, 65H05
	
	\section{Introduction}
	The problem of approximating the roots of nonlinear equations is one of the oldest and most fundamental problems in mathematics. For a polynomial $p$, a root-finding method assigns to $p$ a rational map $F_p$ such that every root $z_0$ of $p$ is an attracting fixed point of $F_p$, i.e., $F_p (z_0)=z_0$ and  $|F_p '(z_0)|<1$. Starting from an initial guess $\alpha_0$, the method generates a sequence $\alpha_{n+1}=F_p(\alpha_n)$ with the aim of converging to a root of $p$. The set of all initial points converging to a given root is called its basin of attraction.
	
	Beginning with Newton's method, numerous iterative schemes, including those of Halley, Traub, Chebyshev, Householder, and Ostrowski, have been developed to improve convergence and computational efficiency. Beyond their numerical significance, these methods naturally give rise to dynamical systems involving  rational maps on the Riemann sphere $\widehat{\mathbb C}$, establishing a connection between numerical analysis and complex dynamics. The iterative behavior of these maps is studied through the associated Fatou and Julia sets. 
	
	The Fatou set of a rational map $R$, denoted by $\mathcal{F}(R)$, is the maximal open subset of $\widehat{\mathbb{C}}$ on which the family $\{R^n:n\in\mathbb{N}\}$ is equicontinuous. The Julia set, denoted by  $\mathcal{J}(R)$ is the complement of the Fatou set in $\widehat{\mathbb{C}}$. A maximal connected component of the Fatou set is called a Fatou component.
	In the context of root-finding methods, Fatou components often coincide with attracting basins of the roots, whereas the Julia set forms the boundary of these basins. Necessary background is provided in Section~\ref{S2_prelim}.
	
	The dynamical study of root-finding methods has revealed a close relationship between the algebraic properties of a polynomial and the topology of the associated Fatou components. Motivated by these connections, we investigate the dynamics of Chebyshev's method, with particular emphasis on the topology of immediate basins of attraction and the influence of polynomial symmetries on the associated dynamical plane.
	
	The dynamics of Chebyshev's method have attracted considerable attention in recent years, primarily because of the existence of non-repelling extraneous fixed points and its third order convergence. Garc\'{i}a-Olivo et al. \cite{GGM2015} initiated its dynamical study by proving the existence of attracting extraneous fixed points for cubic polynomials. Campos, Canela, and Vindel \cite{CCV2020} investigated the Chebyshev--Halley family applied to the polynomials $z^n+c$, obtaining criteria for the simple connectivity of Fatou components and the connectedness of Julia sets. Guti\'{e}rrez and Varona \cite{GV2020} established the existence of superattracting extraneous fixed points and periodic cycles for cubic polynomials and analysed their influence on the parameter space.
	
	Nayak and Pal \cite{Nayak-Pal2022} carried out a systematic study of the  Chebyshev's method for cubic polynomials, determining the possible degrees of the associated rational maps, proving connectedness of the Julia sets, and parametrising the generic family by the multiplier of an extraneous fixed point. More recently, Nayak and Pal \cite{Sym_dyn} investigated the symmetry groups of Chebyshev maps, relating them to the symmetries of the underlying polynomial, and establishing the locally connectedness of Julia sets. Furthermore, Nayak and Phogat \cite{Nayak-Phogat2025} studied Chebyshev's method for polynomials with exactly two distinct roots, proving that the Julia set is connected and that the Fatou set consists precisely of the attracting basins of the roots. 
	
For $n>1$, we consider the Chebyshev map $C_n$ associated to the polynomial $p_n(z) =z(z^n-1)$. The motivation for considering $p_n$ is not only that the Halley's method applied to it has already been investigated \cite{CGJ2025,halley-liu-etal-2025} but also because it is the simplest polynomial with rotational symmetry. Furthermore, Chebyshev's method applied to $A z(z^n+a)$ for any $ A,a\in\mathbb{C}\setminus\{0\}$ reduces to that of $p_n$. This is a consequence of the Scaling property and is discussed in Subsection~\ref{dyn_results}.
	\par
The basin of attraction of a (super)attracting fixed point $z_0$ of $C_n$ is the set $\{z \in \widehat{\mathbb{C}}: \lim_{m \to \infty} C_n ^m (z)=z_0\}$. The connected component of the basin containing $z_0$ is called the immediate basin of $z_0$. A central problem in the dynamics of root-finding methods is to understand the topology of the immediate basins of attraction and their relation to the geometry of the Julia set. In particular, simple connectivity and unboundedness of immediate basins often lead to information about the global dynamics. Our first two results establish these properties for the non-zero roots of $p_n$.
	
	\begin{maintheorem}\label{Imm_unbd}
		The immediate basins of $C_n$ corresponding to every non-zero root of $p_n$ are unbounded and simply connected for all $n$.
	\end{maintheorem}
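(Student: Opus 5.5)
By symmetry it suffices to treat the root $1$. Since $p_n(\omega z)=\omega\, p_n(z)$ for $\omega^n=1$ and $p_n$ has real coefficients, the scaling and conjugation properties of Chebyshev's method give
\[
C_n(\omega z)=\omega\, C_n(z) \quad\text{and}\quad C_n(\bar z)=\overline{C_n(z)}.
\]
Hence the rotation $z\mapsto \omega z$ maps the immediate basin $A_1$ of $1$ onto the immediate basin $A_\omega$ of $\omega$. So I would prove everything for $A_1$ only. I write $C_n(z)=z-\bigl(1+\tfrac12 L_p(z)\bigr)\frac{p_n(z)}{p_n'(z)}$ with $L_p=p_np_n''/(p_n')^2$.

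\emph{Unboundedness.} I would show that the real ray $(1,\infty)$ lies in $A_1$. For real $x>1$ we have $p_n(x)=x^{n+1}-x>0$, $p_n'(x)=(n+1)x^n-1>0$ and $p_n''(x)=n(n+1)x^{n-1}>0$, so $C_n(x)<x$. The substantive point is $C_n(x)>1$ for all $x>1$. Asymptotically $C_n(x)\sim x\bigl(1-\frac{1}{n+1}-\frac{n}{2(n+1)^2}\bigr)$, and near $1$ we have $C_n(x)-1=O((x-1)^3)$ with positive leading coefficient. So I expect that clearing denominators turns $C_n(x)-1>0$ into positivity of an explicit polynomial in $x$ on $(1,\infty)$. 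I would check this by substituting $x=1+t$ and showing all coefficients are nonnegative, or by a direct estimate. Granting it, $C_n$ maps $(1,\infty)$ into itself with $C_n(x)<x$. Every orbit then decreases monotonically to a fixed point in $[1,\infty)$, which must be $1$. Since $[1,\infty)$ is connected, contains $1$ and lies in the basin, it lies in $A_1$, so $A_1$ is unbounded.

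\emph{Simple connectivity.} I would use the Riemann--Hurwitz formula for the proper map $C_n\colon A_1\to A_1$ of some degree $d$. If the only critical point of $C_n$ in $A_1$ is $1$, necessarily of multiplicity $d-1$, then $\chi(A_1)=1$ and $A_1$ is simply connected. So the key step is locating the critical points. From $C_n'=\frac{L_p^2}{2}\,(3-L_{p'})$ the critical points are the roots of $p_n$, the zeros of $p_n''$ (only $z=0$, a root), and the solutions of $p_n'p_n'''=3(p_n'')^2$. Writing $w=z^n$, the last equation is $(n-1)\bigl((n+1)w-1\bigr)=3n(n+1)w$, so
\[
w=-\frac{n-1}{(n+1)(2n+1)}<0.
\]
Thus all free critical points lie on the rays $\arg z=(2k+1)\pi/n$, the symmetry axes bisecting adjacent non-zero roots. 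I would also check that poles of $C_n$ contribute no extra critical points inside $A_1$; this is automatic, since $A_1$ contains no poles.

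The main obstacle is to exclude the free critical points from $A_1$. I would use the reflection $\sigma(z)=\omega\bar z$ with $\omega=e^{2\pi i/n}$. It commutes with $C_n$, fixes the ray $\arg z=\pi/n$ pointwise, and swaps $1$ and $\omega$, so $\sigma(A_1)=A_\omega$. If $A_1$ met this ray at a point $z$, then $z=\sigma(z)\in A_1\cap A_\omega$. This is impossible, since distinct immediate basins are disjoint. The same argument with $\bar\omega$ handles the ray $\arg z=-\pi/n$, and any other bisecting ray through a free critical point is the image of one of these under a rotation preserving the set of such rays. So I would argue that $A_1$, being connected and containing $1$, lies in the open sector $|\arg z|<\pi/n$ and misses every such ray. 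Hence $A_1$ contains no free critical point, which completes the Riemann--Hurwitz argument.
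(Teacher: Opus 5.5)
Your proposal is correct in outline. The unboundedness part is essentially the paper's, but you exclude the free critical points by a different and cleaner route.

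\textbf{Comparison of routes.} The paper gets $(e_2,\infty)\subset\mathcal{A}_1$ from monotone real dynamics, as you do. It then treats the free critical points case by case:
\begin{enumerate}
\item For even $n$, it uses that the critical lines $L_k$ ($k$ odd) are invariant and contain no root of unity.
\item For odd $n$, it shows by a loop-intersection argument that $\mathcal{A}_1$ contains no negative real number. This removes only the real free critical point $c_r$; the other $n-1$ free critical points, which lie on invariant lines not passing through $1$, are not addressed explicitly.
\end{enumerate}
Your reflection $z\mapsto\omega\bar z$ commutes with $C_n$, swaps $\mathcal{A}_1$ and $\mathcal{A}_\omega$, and fixes $L_1$ pointwise. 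This yields the sharper statement $\mathcal{A}_1\subset\{|\arg z|<\pi/n\}$, uniformly in the parity of $n$. It excludes all free critical points at once and contains the paper's lemma on negative reals as a corollary. The paper's route has the advantage of reusing the invariant-line machinery that drives its later results on $\mathcal{A}_0$; yours is a pure symmetry argument. Your sentence about rotating the rays is unnecessary, since a rotation only gives information about the other basins. The confinement follows from the two rays $\arg z=\pm\pi/n$, the fact that $0\notin\mathcal{A}_1$, and the connectedness of $\mathcal{A}_1$.

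\textbf{Two steps still need closing.}

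First, you leave $C_n(x)>1$ for $x>1$ as granted. No coefficient check is needed. Your own formula $C_n'=\frac{1}{2}L_p^2\,(3-L_{p'})$ gives, for $x>1$,
\[
L_{p'}(x)=\frac{(n-1)\bigl((n+1)x^n-1\bigr)}{n(n+1)x^n}<1 \quad\text{and}\quad L_p(x)>0 .
\]
Hence $C_n'>0$ on $(1,\infty)$, so $C_n(x)>C_n(1)=1$. This is how the paper obtains it.

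Second, the Euler-characteristic form of Riemann--Hurwitz does not by itself rule out an infinitely connected $\mathcal{A}_1$. You can cite the B\"ottcher-extension result (Milnor, Theorem~9.3), as the paper does. Alternatively, take a small disk $U_0$ about $1$ with $C_n(U_0)\subset U_0$, and let $U_{k+1}$ be the component of $C_n^{-1}(U_k)$ containing $1$. Riemann--Hurwitz applied to $C_n\colon U_{k+1}\to U_k$, whose only critical point is $1$ with local degree $3$, forces degree $3$ and simple connectivity of $U_{k+1}$. Since $\mathcal{A}_1=\bigcup_k U_k$ is an increasing union of simply connected domains, it is simply connected.
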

	
	\begin{maintheorem}\label{Connected_J.set}
		The Julia set of $C_n$ is connected  for all $n$.
	\end{maintheorem}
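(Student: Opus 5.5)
We prove Theorem~\ref{Connected_J.set} by showing that every Fatou component of $C_n$ is simply connected; for a rational map this is equivalent to connectedness of the Julia set. The plan is to classify the Fatou components and then use Theorem~\ref{Imm_unbd} to control the components that might fail to be simply connected. We use the standard fact that a non-simply connected Fatou component is either a Herman ring or eventually maps onto a multiply connected periodic component. We also use the criterion, in the style of Przytycki and Shishikura, that a periodic attracting or parabolic component $U$ which is not simply connected either contains at least two critical points (counted with multiplicity) or has an essential loop whose image surrounds points of the Julia set.

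\textbf{Step 1 (symmetry and critical points).} First I compute $C_n$ explicitly. Using $p_n(z)=z^{n+1}-z$, the Chebyshev map $C_n(z)=z-\frac{p_n}{p_n'}\bigl(1+\frac12 L_{p_n}(z)\bigr)$ with $L_p=pp''/p'^2$ commutes with $z\mapsto \omega z$ for $\omega^n=1$, and it also commutes with $z\mapsto\bar z$. I then locate the free critical points. They come in orbits of the rotation group, and real-axis symmetry reduces their study to a few representatives on the rays $\arg z=k\pi/n$. The point $\infty$ is a repelling fixed point of $C_n$, with multiplier depending on $n$, so every unbounded Fatou component must touch $\infty$ only through the Julia set.

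\textbf{Step 2 (ruling out multiply connected periodic components).} The immediate basins $A_\omega$ of the non-zero roots are simply connected by Theorem~\ref{Imm_unbd}. They are unbounded and land at $\infty$ along the rays $\arg z = 2\pi k/n$, so the $n$ basins $A_\omega$ together with $\infty$ separate the plane into $n$ sectors-like regions. A Herman ring or a non-simply-connected periodic component $V$ would need an essential Jordan curve $\gamma\subset V$ whose bounded complementary region contains Julia points. By the maximum principle, $C_n^k(\gamma)$ must not be contractible in the complement of the other components. I will argue that $\gamma$ cannot surround any unbounded $A_\omega$, since $\gamma$ is bounded and $A_\omega$ is unbounded and connected. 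It also cannot surround $0$ unless it lies in $A_0$. Hence $\gamma$ lies in one of the regions cut out by the $A_\omega$. Then I will use the symmetry together with the fact that each such region contains few critical values to show that the forward images of $\gamma$ must contract, giving a contradiction.

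\textbf{Step 3 (the basin $A_0$).} The remaining periodic component is the immediate basin $A_0$ of the superattracting origin, which I must show is simply connected. Because of the rotational symmetry, $A_0$ is invariant under $z\mapsto\omega z$. The critical points near $0$ come in symmetric orbits, and a symmetric proper map of degree $d$ with all its critical points arranged symmetrically can be treated by passing to the quotient by $z\mapsto z^n$. On the quotient, $A_0$ becomes a basin with essentially one critical point, which forces simple connectivity. Once all periodic components are simply connected and no Herman rings exist, preimage components are also simply connected. This holds because a preimage component of a simply connected component that contains no critical values of $C_n$ in its full preimage is simply connected. When critical values do occur there, I instead invoke the Shishikura-type result that all preimages of simply connected components under a map with connected Julia set structure remain so, or I apply Riemann--Hurwitz directly.

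\textbf{Main obstacle.} The hardest step is Step 2, and specifically excluding Herman rings and additional attracting or parabolic cycles whose components are not simply connected. I would first try to show that the free critical points lie in the basins $A_\omega$ or $A_0$. If that fails, I would use the sector decomposition given by the unbounded basins from Theorem~\ref{Imm_unbd}, together with a degree count in each sector, to force any essential curve to be null-homotopic.
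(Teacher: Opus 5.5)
There is a genuine gap: the framework you set up does not close, and each of its hard steps is stated rather than proved. Your starting ``standard fact'' is false. Riemann--Hurwitz forces \emph{preimages} of multiply connected components to be multiply connected, but a multiply connected component can map onto a simply connected one. For example, McMullen maps $z^m+\lambda/z^m$ ($m\ge 3$, $\lambda$ small) have only simply connected periodic Fatou components, yet their Julia set is a Cantor set of circles; the annular components eventually map onto the simply connected trap door.

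So even completed Steps~2--3 would not suffice. A component $V$ mapping with degree $d$ onto a simply connected component and containing $k$ critical points has $\chi(V)=d-k$, so you must know where every free critical point lands. For even $n\ge 18$ this is not known (convergence is left open there), and your fallback result ``for maps with connected Julia set'' is circular.

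Step~2 gives no reason why an essential curve in a component $V\ne\mathcal{A}_0$ cannot surround $0$. That claim is itself a case of the theorem, since such a curve separates $\partial\mathcal{A}_0$ from $\infty$ in the Julia set. Nothing in your argument excludes an annular component around $\overline{\mathcal{A}_0}$ when $\mathcal{A}_0$ is bounded. Also, disjoint unbounded basins reaching $\infty$ need not cut the plane into sectors.

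In Step~3, for $n\le 16$ all free critical points lie in $\mathcal{A}_0$ (Remark~\ref{Crit_A_0}). The quotient basin under $z\mapsto z^n$ then contains two critical points, $0$ and $c^n$, so the one-critical-point criterion does not apply. In the paper, simple connectivity of $\mathcal{A}_0$ is derived \emph{from} this theorem, not used to prove it.

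The missing idea is the pole criterion, Lemma~\ref{Jconnected}, which avoids all Fatou-component bookkeeping. Since $\infty$ is repelling and $\mathcal{A}_1$ is unbounded, $\partial\mathcal{A}_1$ contains a pole. Since $\mathcal{A}_1$ is simply connected (Theorem~\ref{Imm_unbd}), $\partial\mathcal{A}_1$ is a connected subset of the Julia set containing both that pole and $\infty$, so the pole lies on the unbounded Julia component. The rotation $z\mapsto e^{2\pi i/n}z$ preserves the Julia set and $\infty$, and carries this pole through all $n$ poles (the roots of $z^n=\frac{1}{n+1}$). Hence every pole lies on the unbounded Julia component, and the second part of the lemma yields connectedness. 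Nothing about the free critical points, $\mathcal{A}_0$, or preperiodic components is needed.
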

 For the Newton's method applied to a polynomial $p$, all the immediate basins corresponding to the roots of $p$ are unbounded (see \cite[Proposition 6]{HSS2001}). In contrast, for the family $p_n(z)=z(z^n-1)$, $ n>1$, bounded immediate basins were recently shown to occur for Halley's method \cite{CGJ2025,halley-liu-etal-2025}. We show that Chebyshev's method exhibits a similar behavior.
 
\begin{maintheorem}\label{Bdd_imm}
	The immediate basin $\mathcal{A}_0$ of $C_n$ corresponding to the origin is
	\begin{enumerate}
		\item unbounded whenever $n\leq 16$,
		\item bounded whenever $n\geq 17$.
	\end{enumerate}
\end{maintheorem}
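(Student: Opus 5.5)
The plan rests on the rotational and reflection symmetry of $C_n$. Writing $w=z^n$, one has $p_n/p_n'=z(w-1)/((n+1)w-1)$ and $L_{p_n}=p_np_n''/p_n'^2=n(n+1)w(w-1)/((n+1)w-1)^2$. Hence
\[
C_n(z)=z\,R_n(z^n),\qquad R_n(w)=1-\frac{w-1}{(n+1)w-1}\Bigl(1+\frac{n(n+1)w(w-1)}{2((n+1)w-1)^2}\Bigr),
\]
where $R_n$ has real coefficients. Consequently $C_n(\omega z)=\omega C_n(z)$ for $\omega^n=1$, $C_n(\bar z)=\overline{C_n(z)}$, and the union of the lines through $0$ on which $z^n$ is real is invariant. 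I will study the ``anti-root'' rays $\ell_k=\{te^{i\pi(2k+1)/n}:t>0\}$, on which $w=-s$ with $s=t^n>0$ and $C_n$ acts by $t\mapsto t\,R_n(-s)$. The point $\infty$ is a repelling fixed point of $C_n$, since $R_n(\infty)=1-\frac{1}{n+1}\bigl(1+\frac{n}{2(n+1)}\bigr)\in(0,1)$.

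For part 1 ($n\le 16$), I will show that $|R_n(-s)|<1$ for all $s>0$. Then $|C_n(z)|<|z|$ on every $\ell_k$, and these rays, together with their images under $z\mapsto -z$, are permuted by $C_n$. Each orbit starting on $\ell_k$ therefore decreases in modulus. Since the only fixed points on these lines are $0$ and $\infty$, such an orbit converges to $0$. Thus the whole ray $\ell_k$ is a connected subset of the basin of $0$ containing points near $0$, so $\ell_k\subset\mathcal A_0$ and $\mathcal A_0$ is unbounded.

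The inequality $-1<R_n(-s)<1$ clears to two polynomial inequalities in $s$ with coefficients polynomial in $n$, using $(n+1)s+1>0$. I expect that one of them, $R_n(-s)<1$, is automatic. The other reduces to a quadratic or cubic in $s$ having no positive root exactly when $n\le 16$. Equivalently, I will locate the extraneous fixed points, the solutions of $L_{p_n}=-2$:
\[
n(n+1)w(w-1)+2((n+1)w-1)^2=0 .
\]
I will check whether this quadratic in $w$, or the corresponding condition for a $2$-cycle when $R_n=-1$, has negative real roots, and where the discriminant or sign change crosses zero. I expect the threshold between $16$ and $17$ to come out of this computation.

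For part 2 ($n\ge 17$), the same computation should produce, on each $\ell_k$, a repelling point $\xi_k$ (an extraneous fixed point, or a point of a $2$-cycle alternating between $\ell_k$ and $-\ell_k$) lying in $\mathcal J(C_n)$. The segment $(0,\xi_k)$ lies in $\mathcal A_0$, while points beyond $\xi_k$ do not converge to $0$. Separately, on the root rays ($w>0$) the real dynamics show that $[1,\infty)$ lies in the immediate basin $\mathcal A_1$; this is consistent with Theorem~\ref{Imm_unbd}. To conclude boundedness, I will build a closed curve around $0$ avoiding $\overline{\mathcal A_0}\setminus\{\xi_k\}$. This curve alternates between arcs in $\mathcal A_{\omega^k}$ and arcs in $\mathcal A_{\omega^{k+1}}$ that meet at $\xi_k$, and by the symmetries it suffices to produce one such piece.

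The key claim is that $\xi_k$ is accessible from both neighbouring root basins. I would try to prove it by following the outer segment $(\xi_k,\infty)\subset\ell_k$: show that it lies in the Julia set or in a region whose preimages are pinched, and use the local linearisation at the repelling point $\xi_k$ to obtain invariant curves from $\mathcal A_{\omega^k}$ and $\mathcal A_{\omega^{k+1}}$ landing at $\xi_k$. Alternatively, I would show directly that $\mathcal A_0$ meets each $\ell_k$ only in $(0,\xi_k)$ and is confined to the sectors around $\ell_k$. Then an invariant access from $\mathcal A_0$ to $\infty$ would have to land at the repelling fixed point $\infty$ through one of the sectors bounded by the unbounded basins $\mathcal A_{\omega^k}$, which the ray dynamics rule out.

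I expect this topological step, separating $\mathcal A_0$ from $\infty$, to be the main obstacle. Both the $|R_n(-s)|<1$ computation for $n\le16$ and the existence of the repelling points $\xi_k$ for $n\ge17$ are explicit, if tedious, real-variable checks.
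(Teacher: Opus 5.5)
\paragraph{Part 1.} For even $n$ your argument works. It is the paper's argument on the odd-numbered lines, in a cleaner form: $-\ell_k$ is again an anti-root ray, and $R_n(-s)<1$ holds automatically. The other inequality, $R_n(-s)>-1$, is equivalent to $(n+1)(4n^2+5n+2)s^3+2(n+1)(4n+3)s^2-(n^2-7n-6)s+2>0$. For integers $n\ge 2$ this holds for all $s>0$ exactly when $n\le 16$; its minimum over $s>0$ is about $0.26$ at $n=16$ and about $-0.15$ at $n=17$.

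For odd $n$ the argument fails:
\begin{itemize}
\item Since $(-z)^n=-z^n$, the ray $-\ell_k$ is a \emph{root} ray.
\item Because $R_n(-s)\approx -\tfrac{n(n-1)}{2}s<0$ for small $s$, every orbit on $\ell_k$ eventually jumps onto a root ray.
\item On a root ray the factor is $R_n(s)$ with $s>0$. It exceeds $1$ on $(e_1^n,\xi^n)$ and has a pole at $\xi^n$, and these rays carry the fixed points $\omega^j,\omega^je_1,\omega^je_2$.
\end{itemize}
So neither \emph{each orbit decreases in modulus} nor \emph{the only fixed points are $0,\infty$} holds. By rotation it suffices to treat the negative axis, where the jump lands in $C_n((x_r,0))=(0,c_r^*]$. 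The condition you actually need is therefore $c_r^*<e_1$, which puts the landing point in $(0,e_1)\subset\mathcal{A}_0$. This is the paper's Proposition~\ref{Odd_free_cr}(1), a separate comparison of $S(n)$ with $E_1(n)$. Your bound $|R_n(-s)|<1$ does not give it.

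\paragraph{Part 2.} As you say, this is not yet a proof, and the proposed mechanism is doubtful.
\begin{itemize}
\item For odd $n$, the endpoint of $\mathcal{A}_0\cap\ell_k$ is neither an extraneous fixed point nor a periodic point. The extraneous fixed points satisfy $z^n=e_1^n$ or $e_2^n$, both positive, so they lie on root rays. On the negative axis $\mathcal{A}_0$ ends at the point $y\in(c_r,0)$ with $C_n(y)=e_1$. This point is strictly preperiodic, so there is no linearisation at it to use.
\item The outer segments are not contained in $\mathcal{J}$. For odd $n\ge 17$, the point $c_r<y$ lies in the basin of $1$. For even $18\le n\le 112$, the paper's numerics put the free critical point beyond $x^*$ in a preimage of $\mathcal{A}_0$.
\item For even $n$, the line $L_{2k+1}$ is invariant and contains no non-zero root, so it misses every root basin. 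Your closed curve would need $\overline{\mathcal{A}_{\omega^k}}$ and $\overline{\mathcal{A}_{\omega^{k+1}}}$ to meet exactly at $\lambda_{2k+1}x^*$, and nothing supports this.
\end{itemize}

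The missing ingredient is which critical points $\mathcal{A}_0$ contains. For $n\ge 17$ the paper first shows that no free critical point lies in $\mathcal{A}_0$ (Proposition~\ref{n-geq-17}). If $c_r\in\mathcal{A}_0$, then $c_r^*>e_1$ also lies in $\mathcal{A}_0$. A path in $\mathcal{A}_0$ from $0$ to $c_r^*$, together with its conjugate, then surrounds the Julia point $e_1$. This contradicts simple connectivity, which comes from Theorem~\ref{Connected_J.set}; the even case is analogous, using the $2$-cycle $\pm x^*$. Hence $\deg_{\mathcal{A}_0}C_n=n+1$, and $C_n|_{\mathcal{A}_0}$ is conjugate to $w\mapsto w^{n+1}$. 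Its $n$ fixed internal rays are the rotates of $(0,e_1)$ and land at $\omega^je_1$ (Proposition~\ref{e_1onBdryA_0}), so none lands at $\infty$.

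Your alternative needs exactly this count to exclude an invariant access to $\infty$ between the single fixed accesses of the root basins (Remark~\ref{one_access-A1}). Without it, $\mathcal{A}_0$ could have degree $2n+1$ and extra fixed accesses. You would also have to justify why an access to $\infty$ may be assumed invariant.
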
	

For a polynomial $p$, a root-finding method $F_p$ is said to be \emph{convergent} if the Fatou set $\mathcal{F}(F_p)$ consists entirely of the basins of attraction of the roots of $p$ (see \cite{McMullen1987}). Convergence is a fundamental requirement for the practical effectiveness of an iterative method, as it guarantees that \emph{almost every} initial guess eventually converges to a root.

\begin{maintheorem}\label{Convergent}
	If $n\leq 16$ or $n$ is odd, then $C_n$ is convergent.
\end{maintheorem}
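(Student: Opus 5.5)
Our plan is to use the standard principle that every non-repelling cycle and every Fatou component cycle of a rational map attracts (or, for Siegel discs and Herman rings, has on its boundary) a critical point. Since $\mathcal{J}(C_n)$ is connected by Theorem~\ref{Connected_J.set}, there are no Herman rings. So it suffices to show that every \emph{free} critical point of $C_n$ lies in the basin of a root. Then no attracting, parabolic or Siegel cycle other than the roots can exist, and every Fatou component is eventually mapped into an immediate basin of a root. We take the free critical points to be those that are neither roots of $p_n$ nor preimages of roots.

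The first step is to compute $C_n(z)=z-\left(1+\tfrac{L_{p_n}(z)}{2}\right)\tfrac{p_n(z)}{p_n'(z)}$, where $L_{p_n}=p_np_n''/(p_n')^2$, and to use the rotational symmetry $C_n(\omega z)=\omega C_n(z)$ with $\omega^n=1$. Writing $C_n(z)=z\,g(z^n)$, we reduce the dynamics to the semiconjugate map $w\mapsto w\,g(w)^n$, where $w=z^n$. This reduction lets us locate the critical points. Besides the superattracting roots, and possibly $\infty$ and the zeros of $p_n'$ (which are poles or extraneous points), the free critical points form finitely many $n$-fold symmetric orbits, and each orbit corresponds to a single critical value of the reduced map. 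By symmetry, one representative per orbit suffices, and it can be taken on the real axis or on a fixed ray. We also determine the extraneous fixed points, the zeros of $1+L_{p_n}/2$, and show that they are repelling. By Theorem~\ref{Imm_unbd} and the symmetry, the immediate basins of the non-zero roots are unbounded, and each contains an invariant access to $\infty$ along a ray through a root, which is an invariant line.

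The second step is to follow each free critical orbit. On the invariant real line (and its rotations) $C_n$ restricts to a real one-dimensional map. We plan to show, by sign analysis of $C_n(x)-x$ together with monotonicity, that each real free critical point converges either to $0$ or to $1$, the latter including the real non-zero root for odd $n$. For odd $n$, the real line contains $0$, $1$ and the symmetric points $-\omega^{k}$, so every free critical point on a symmetry line is trapped in an interval whose endpoints are a repelling fixed point and a pole, and is pushed monotonically to a root. For even $n$, one family of critical points lies near the origin. When $n\le16$, Theorem~\ref{Bdd_imm} shows that $\mathcal{A}_0$ is unbounded and reaches $\infty$ along a ray. We expect the proof of that result to be exactly the statement that the relevant critical points lie in $\mathcal{A}_0$. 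So we will reuse the same real-interval estimates, showing that the critical value falls into an explicit disk around $0$ on which $|C_n(z)|<|z|$.

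The main obstacle is the case of even $n\ge 18$. There $\mathcal{A}_0$ is bounded, and a free critical point might fail to lie in any basin, which is why the theorem excludes that case. For the allowed cases, the hardest part is to produce explicit trapping regions, such as a disk $D(0,r_n)$ and sectors around the rays to the non-zero roots, and then to verify numerically-symbolically that each free critical value lands in one of them for all odd $n$ uniformly. We would first try to obtain asymptotic estimates for the critical points as $n\to\infty$, of the form $z^n\approx$ a constant, and then handle small $n$ by direct computation.
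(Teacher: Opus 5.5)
Your reduction is sound: once every free critical point lies in a root basin, the classification of periodic components gives convergence, and ruling out Herman rings via the connectedness of $\mathcal{J}(C_n)$ is a valid substitute for Proposition~\ref{no_rot-dom}. Your definition of ``free'' needs fixing, though. The poles are neither roots nor preimages of roots, yet they map to the repelling fixed point $\infty$. The free critical points are exactly the $n$ solutions of $(n+1)(2n+1)z^n+(n-1)=0$, which form a single rotation orbit.

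The genuine gap is in your second step, which carries all the content, and the mechanism you describe fails exactly at the thresholds $n=16,17$. For odd $n$, the real free critical point $c_r$ is negative, and $(-\infty,0)$ contains no fixed point and no pole. So $c_r$ is not trapped and pushed monotonically to a root. Instead it is thrown across the origin to $c_r^*=-D_nc_r>0$, where $D_n=\frac{(n-1)^2(2n+1)}{27(n+1)^2}$, and its fate depends on where $c_r^*$ lies relative to $e_1<\xi<e_2$. The paper decides this by comparing $S(n)=(c_r^*)^n$ with $E_1(n)=e_1^n$ and $E_2(n)=e_2^n$. It uses that $S$ increases for $n\ge5$ while $E_1,E_2$ decrease (Lemmas~\ref{fn_S} and~\ref{E_decreasing}), so a few explicit evaluations give $c_r^*<e_1$ for $n\le15$ and $c_r^*>e_2$ for $n\ge19$. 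For $n=17$, however, $c_r^*\in(e_1,\xi)$, which is in neither of your trapping regions. Since $C_{17}$ maps $(e_1,\xi)$ increasingly onto $(e_1,\infty)$, the next iterate could a priori fall into $(\xi,e_2)$ and be sent to the negative axis. The paper checks separately that $C_{17}(c_r^*)>1$ (Proposition~\ref{Odd_free_cr}(2)). Your criterion that each free critical value lands in a trapping region is therefore false at $n=17$.

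For even $n\le16$, your disk argument fails at $n=16$. There $|c^*|^{16}=S(16)\approx0.012$, whereas $e_1^{16}=E_1(16)\approx0.006$, so $|c^*|>e_1$. Since $e_1$ is a fixed point, no disk centred at $0$ on which $|C_n(z)|<|z|$ can contain $c^*$. The missing idea is to stay on the invariant critical line. On $L_k$ with $k$ odd, $C_n$ is conjugate to the odd real map $Q_n$, which decreases on $(0,\tilde c)$ with $\tilde c=|c|$. The inequality $D_n<1$, equivalent to $n\le16$ by Lemma~\ref{c_and_c^*}, gives $|Q_n(\tilde c)|<\tilde c$. From this the paper obtains $Q_n^2(x)<x$ on $(0,\tilde c)$, and hence that the whole line lies in $\mathcal{A}_0$ (Proposition~\ref{dynamics-criticallines}). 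This one-dimensional argument, not a modulus estimate around $0$, is where the threshold $16$ actually enters. It is also exactly what breaks for even $n\ge18$, where a $2$-cycle $\pm x^*$ appears inside $(-\tilde c,\tilde c)$. Without these two borderline arguments, your plan of asymptotics in $n$ plus unspecified direct computation for small $n$ leaves the cases $n=16$ and $n=17$ open.
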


The Julia set of a rational function is often preserved by simple maps, and studying these maps can simplify its dynamics. The collection of all holomorphic Euclidean isometries $\sigma(z)=az+b$, $|a|=1$, satisfying $\sigma(\mathcal{J}(R))=\mathcal{J}(R)$ forms the \emph{symmetry group} of $R$, denoted by $\Sigma R$.

This group is well-classified for polynomials (see \cite[Theorem 9.5.4]{Beardon_book}). Consider a monic polynomial $p$ whose second leading coefficient is zero. Such a polynomial is called \emph{normalized}, and can be written as $p(z)=z^k P_0(z^m)$, where $P_0$ is a monic polynomial, and $k, m$ are non-negative integers, maximal for the expression. Then $\Sigma p=\{z\mapsto \lambda z: \lambda^m =1\}.$   

The symmetry group of a polynomial $p$ and that of a root-finding method applied to $p$ are related. In \cite{Sym_dyn}, it is proved that whenever a root-finding method $F_p$ satisfies the Scaling property, then for a normalized polynomial $p$, we have $\Sigma p\subseteq \Sigma F_p$. Since Chebyshev's method satisfies this property (see Lemma~\ref{Scaling}), it was conjectured that $\Sigma p=\Sigma C_p$. We confirm this conjecture for the family under consideration.
\begin{maintheorem}\label{Equal_sym}
	The symmetry groups of $p_n$ and $C_n$ are identical.
\end{maintheorem}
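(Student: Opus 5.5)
The polynomial $p_n(z)=z(z^n-1)$ is normalized and can be written as $z\,P_0(z^n)$ with $P_0(w)=w-1$, so $k=1$ and $m=n$ are maximal. Hence $\Sigma p_n=\{z\mapsto\lambda z:\lambda^n=1\}$. By the Scaling property (Lemma~\ref{Scaling}) and the result of \cite{Sym_dyn}, $\Sigma p_n\subseteq\Sigma C_n$. It therefore suffices to show that every isometry $\sigma(z)=az+b$, $|a|=1$, preserving $\mathcal{J}(C_n)$ is a rotation by an $n$-th root of unity.

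The plan is in two steps: first show that $\sigma$ fixes the origin, then that $a^n=1$.

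\textbf{Step 1: $b=0$.} The symmetry group of a rational map whose Julia set is not a circle or line is a finite group of rotations about a common centre, and that centre is forced by the dynamics. I would identify it as the origin using the roots. Each root is a superattracting fixed point lying in a Fatou component. The $n$ non-zero roots $\omega^j$ are equidistributed on the unit circle, and $0$ is their centroid. By Theorem~\ref{Connected_J.set} the Julia set is connected, so the Fatou components are simply connected. Since $\Sigma C_n$ contains the rotations of order $n$ about $0$, any isometry $\sigma$ in the group, if $b\neq 0$, would combine with them to generate a non-discrete or translation-containing group. Translations cannot preserve a compact Julia set. (Here $\mathcal{J}(C_n)$ is bounded away from $\infty$, or at least $\infty$ lies in $\mathcal{J}$ and is fixed.) So the group is a finite group of rotations, and a finite group containing rotations about $0$ must have all its elements fixing $0$. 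This gives $b=0$. I would back this up with the fact that $\infty$ is a repelling fixed point of $C_n$ in $\mathcal{J}$, so $\sigma$ must fix $\infty$ trivially.

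\textbf{Step 2: $a^n=1$.} This is the main obstacle. With $\sigma(z)=az$, $\sigma$ maps Fatou components to Fatou components. The immediate basins are distinguished by their behaviour, but Fatou components are not a priori preserved dynamically by $\sigma$. So I would use a rigidity result of the type in Beardon's book: if $\sigma$ preserves $\mathcal{J}(R)$, then $R\circ\sigma=\sigma^{k}\circ R$ for suitable iterates, or more simply $\sigma$ maps the set of Fatou components of a given type to themselves.

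A concrete route uses Theorem~\ref{Imm_unbd} and Theorem~\ref{Bdd_imm}. The origin is fixed by $\sigma$, so $\sigma$ maps the Fatou component $U_0$ containing $0$ to itself. Consider the unbounded Fatou components adjacent to $\infty$. Near the repelling fixed point $\infty$ there are exactly the $n$ unbounded immediate basins $\mathcal{A}_{\omega^j}$ (plus possibly $\mathcal{A}_0$ when $n\le16$), arranged in invariant accesses with asymptotic directions. Computing $C_n$ near $\infty$ as $C_n(z)\approx cz$ with $|c|>1$ real, these accesses point along directions determined by the symmetric structure: the rays through $\omega^j$, and the rays through $\omega^{j+1/2}$ when $n\le 16$. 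The rotation $\sigma$ must permute these finitely many asymptotic directions, and $\mathcal{A}_0$ is distinguished as the component containing the fixed point $0$ of $\sigma$. Hence $\sigma$ permutes the directions of the non-zero root basins, so $a\omega^j=\omega^{j'}$, which gives $a^n=1$.

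If the accessibility argument is too delicate, the fallback is more algebraic. Use the conjugacy $C_n\circ\sigma$ versus $\sigma\circ C_n$: maps with the same Julia set and the same measure of maximal entropy commute up to symmetry (Levin's theorem). This gives $C_n(az)=a^{\ell}C_n(z)$. Comparing coefficients of the explicit rational formula for $C_n$, which is a function of the form $z\,G(z^n)$, would then force $a^n=1$.
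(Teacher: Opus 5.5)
Your overall strategy is the same as the paper's: every element of $\Sigma C_n$ is a rotation about $0$, and such a rotation must carry $\mathcal{A}_1$ onto some $\mathcal{A}_j$. The gap is that the decisive step of Step~2 is asserted rather than proved. The claim that near $\infty$ there are \emph{exactly} the unbounded immediate basins $\mathcal{A}_{\omega^j}$ (plus possibly $\mathcal{A}_0$) is what the theorem hinges on. Nothing in your argument prevents $\sigma(\mathcal{A}_1)$ from being an unbounded strict preimage of a basin, or an unbounded component of an attracting or parabolic cycle of period at least $2$. This is where the paper does its work. For $n\le 16$ or $n$ odd it combines convergence (Theorem~\ref{Convergent}), local connectivity of $\mathcal{J}(C_n)$ via Tan--Yin, and Lemma~\ref{bdd_pre-im}. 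For even $n\ge 18$, where convergence is open and extra cycles are not excluded, it plays the unique access of $\mathcal{A}_1$ to $\infty$ (Remark~\ref{one_access-A1}) against Proposition~\ref{Para_access}, and invokes Theorem~C of \cite{Nayak-Pal2025} when there are no parabolic domains. Here is one way to close the gap for all $n$ at once, which I have not checked in full detail. Linearize $C_n$ at the repelling fixed point $\infty$ and pull back a neighbourhood of a compact segment of $(e_2,\infty)$ by the inverse branch fixing $\infty$. By the real symmetry these pull-backs meet $(e_2,\infty)$, so they stay in $\mathcal{A}_1$. This gives $\mathcal{A}_1\supset\{|z|>R,\ |\arg z|<\theta\}$ for some $R,\theta>0$. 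Then $\sigma(\mathcal{A}_1)$ contains the rotated truncated sector. For large $t>0$ that sector contains both $at$ and $C_n(at)=\kappa a t+O(1/t)$, where $\kappa=\frac{n(2n+1)}{2(n+1)^2}$, so $C_n(\sigma(\mathcal{A}_1))=\sigma(\mathcal{A}_1)$. A simply connected invariant Fatou component of $C_n$ is then an immediate basin of a root. Indeed, the $3n+2$ fixed points are the superattracting roots, the $2n$ repelling extraneous fixed points and the repelling point $\infty$, so no parabolic or Siegel fixed point exists. Finally, it is not $\mathcal{A}_0$, because $\sigma(\mathcal{A}_0)=\mathcal{A}_0$.

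Several other steps also need repair.

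\emph{Behaviour near $\infty$.} One has $C_n(z)=\kappa z+O(1/z)$ with $\kappa<1$, not $|c|>1$; it is the multiplier $1/\kappa$ in the chart $w=1/z$ that exceeds $1$. Also, accesses are homotopy classes, not directions.

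\emph{Stabilizer of $\mathcal{A}_1$.} Knowing that $\sigma$ permutes $\mathcal{A}_1,\dots,\mathcal{A}_n$ yields $a^n=1$ only after you exclude a rotation $\rho(z)=\beta z$, $\beta\neq 1$, with $\rho(\mathcal{A}_1)=\mathcal{A}_1$. This is easy but must be argued. The orbit $\rho^k(1)=\beta^k$ returns arbitrarily close to $1$, so $\rho$ restricted to the simply connected hyperbolic domain $\mathcal{A}_1$ is an elliptic automorphism. It therefore has a fixed point in $\mathcal{A}_1$, whereas its only fixed points are $0\in\mathcal{A}_0$ and $\infty\in\mathcal{J}(C_n)$.

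\emph{Step~1.} Since $\infty\in\mathcal{J}(C_n)$, the Julia set is unbounded in $\mathbb{C}$, so the remark that translations cannot preserve a compact Julia set does not apply. That $\sigma$ fixes $\infty$ is automatic for affine maps. The finiteness of $\Sigma C_n$ is stated without proof or reference. The paper takes the absence of translations from \cite[Theorem~1.2]{Sym_dyn}, after which $b=0$ is immediate: for $\tau(z)=e^{2\pi i/n}z\in\Sigma C_n$, the commutator $\tau\sigma\tau^{-1}\sigma^{-1}$ is the translation $z\mapsto z+(e^{2\pi i/n}-1)b$.

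\emph{The Levin-type fallback.} It needs $\sigma$ to preserve the measure of maximal entropy of $C_n$. For polynomials this is automatic, since that measure is harmonic measure on $\mathcal{J}$, which is why $P\circ\sigma=\sigma^{d}\circ P$ holds there. For the rational map $C_n$ it does not follow from $\sigma(\mathcal{J})=\mathcal{J}$, so the relation $C_n(az)=a^{\ell}C_n(z)$ is unjustified. If you had that relation, comparing the poles $z^n=\frac{1}{n+1}$ would indeed force $a^n=1$.
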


The article is organized as follows. In Section~\ref{S2_prelim}, we recall several preliminary  results and facts that will be used throughout the paper. In Section~\ref{S3_Fun-prop}, we investigate various functional properties of the Chebyshev map $C_n$. Section~\ref{S4_Dynamics} is devoted to the dynamical study of $C_n$, where we prove our main results. Finally, in Section~\ref{S5_Con-rem}, we compare the dynamics of Newton's, Halley's, and Chebyshev's methods and present some concluding remarks.

\section{Background and preliminaries}\label{S2_prelim}
For completeness and future reference, we summarize in this section several standard results that will be frequently used. The first subsection concerns with fixed points and the Scaling property of Chebyshev's method while the second focuses on  general dynamical properties of rational functions.
\subsection{Fixed points and the Scaling property of Chebyshev's method}
Chebyshev's method $C_p$ applied to a polynomial $p$ is given by
\begin{equation}\label{Formula-C_p}
	C_p(z)=z-\left(1+\frac{1}{2}L_p(z)\right)\frac{p(z)}{p'(z)},
\end{equation}
where $L_p(z)=\frac{p(z)p''(z)}{(p'(z))^2}.$
This is a third-order convergent method, i.e., the local degree of $C_p$ at every root of $p$ is at least $3$. 
\par 

A point $z_0 \in \widehat{\mathbb{C}}$ is a \textit{fixed point} of a rational map $R$ if $R(z_0)=z_0$.  
The \textit{multiplier} $\lambda_{z_0}$ of a fixed point $z_0$ is defined as $R'(z_0)$ if $z_0$ is finite and as $S'(0)$ if $z_0=\infty$, where $S(z)=\tfrac{1}{R(1/z)}$. A fixed point $z_0$ is called attracting, neutral, or repelling if $|\lambda_{z_0}|<1, =1$, or $>1$, respectively.
An attracting fixed point is called \textit{superattracting} if its multiplier is $0$. 

A (super)attracting fixed point $z_0$ of $R$ is always in the Fatou set. Moreover, there exist Fatou components consisting of points whose forward orbits converge to $z_0$. The union of all such components is actually the basin of attraction of $z_0$ and is denoted by $\mathcal{B}_{z_0}$ (or $\mathcal{B}_{R,z_0}$, when necessary). The Fatou component of $\mathcal{B}_{z_0}$ that contains $z_0$ is called the immediate basin of $z_0$ and is denoted by $\mathcal{A}_{z_0}$ (or $\mathcal{A}_{R,z_0}$, when necessary).  If $z^*$ is a parabolic fixed point i.e., $R'(z_0)=e^{2\pi i \theta}$, where $\theta$ is rational, then it is in the Julia set. However, like a (super)attracting fixed point, it has a basin containing points whose successive iterations eventually converge to $z^*$. We denote the parabolic basin with the same notation $\mathcal{B}_{z^*}$. In this case, the immediate basin  $\mathcal{A}_{z^*}$ is defined as the union of components of $\mathcal{B}_{z^*}$ whose boundary contains $z^*$.
\par 
A point $w_0 \in \widehat{\mathbb{C}}$ is called a $k$-periodic point of $R$ if $k$ is the smallest natural number such that $R^k (w_0)=w_0$. It is called attracting, neutral, or repelling if $w_0$ is attracting, neutral or repelling, respectively as a fixed point of $R^k$.
\par
It follows from Equation~\eqref{Formula-C_p} that, in contrast to Newton's method, $C_p$ possesses finite extraneous fixed points, namely the solutions of $L_p(z)=-2$. The fixed points of $C_p$ are classified as follows.

\begin{lem}(\cite[Proposition 2.3]{Nayak-Pal2022})
	A root of a polynomial $p$ with multiplicity $k$ is an attracting fixed point of $C_p$ with multiplier $\frac{(k-1)(2k-1)}{2k^2}$. The point at $\infty$ is a repelling fixed point of $C_p$ with multiplier $\frac{2d^2}{2d^2 -3d+1}$, where $d$ is the degree of $p$. If $e_0$ is an extraneous fixed point, its multiplier is given by $2\left(3-L_{p'}(e_0)\right)$.
	\label{multiplier-fixedpoints}
\end{lem}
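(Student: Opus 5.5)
The approach is a direct computation from formula~\eqref{Formula-C_p}, treating the three kinds of fixed points separately. Throughout, write $N_p(z)=z-\frac{p(z)}{p'(z)}$ and use the identities
\[
N_p'(z)=L_p(z), \qquad \left(\frac{p}{p'}\right)'=1-L_p .
\]
With these, $C_p=N_p-\tfrac12 L_p\frac{p}{p'}$, and differentiating gives
\[
C_p'(z)=L_p-\tfrac12 L_p'\,\frac{p}{p'}-\tfrac12 L_p(1-L_p)=\tfrac12 L_p(1+L_p)-\tfrac12 L_p'\,\frac{p}{p'} .
\]
Every multiplier will be read off from this expression.

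\emph{Roots.} Let $p(z)=(z-z_0)^k g(z)$ with $g(z_0)\neq 0$. Near $z_0$ one has $\frac{p}{p'}=\frac{z-z_0}{k}+O((z-z_0)^2)$ and $L_p(z_0)=\frac{k-1}{k}$. Since $L_p$ is holomorphic at $z_0$, the term $L_p'\frac{p}{p'}$ vanishes there. Hence
\[
C_p'(z_0)=\tfrac12\cdot\tfrac{k-1}{k}\cdot\tfrac{2k-1}{k}=\tfrac{(k-1)(2k-1)}{2k^2},
\]
which is less than $1$. So the root is attracting.

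\emph{Infinity.} For $p$ of degree $d$, as $z\to\infty$ we have $\frac{p}{p'}=\frac zd(1+O(1/z))$ and $L_p\to\frac{d-1}{d}$. Therefore
\[
C_p(z)=z\Bigl(1-\tfrac1d\bigl(1+\tfrac{d-1}{2d}\bigr)\Bigr)+O(1)=\tfrac{2d^2-3d+1}{2d^2}\,z+O(1).
\]
Conjugating by $1/z$ gives multiplier $\frac{2d^2}{2d^2-3d+1}$, which is greater than $1$ for $d\ge 2$. So $\infty$ is repelling.

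\emph{Extraneous fixed points.} Now let $p(e_0)\ne0$ and $p'(e_0)\ne0$ with $L_p(e_0)=-2$. The first term of $C_p'$ gives $\tfrac12(-2)(-1)=1$. For the second term, compute
\[
L_p'=\frac{p'p''+pp'''}{p'^2}-\frac{2pp''^2}{p'^3}=\frac{p''}{p'}+\frac{p}{p'}\cdot\frac{p'''}{p'}-2L_p\frac{p''}{p'} .
\]
Multiplying by $\frac{p}{p'}$ yields $L_p+\frac{pp'''}{p'^2}-2L_p^2$. At $e_0$ this equals $-2-8+\frac{pp'''}{p'^2}$.

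It remains to express $\frac{pp'''}{p'^2}$ through $L_{p'}=\frac{p'p'''}{p''^2}$. Write $\frac{pp'''}{p'^2}=L_{p'}\cdot\frac{pp''^2}{p'^3}=L_{p'}\,L_p\frac{p''}{p'}$. This still contains the uncontrolled ratio $p''/p'$, so I would recheck the intended normalization here. Substituting $L_p=-2$ gives
\[
C_p'(e_0)=1-\tfrac12\Bigl(-10+L_{p'}(e_0)\cdot(-2)\tfrac{p''}{p'}\Bigr).
\]
Matching this with $2(3-L_{p'})$ requires checking the algebra carefully. This bookkeeping step is the main obstacle: I expect the factors to collapse to $6-2L_{p'}(e_0)$ once the identity $pp''=-2p'^2$ is used consistently in each occurrence, and I would verify this symbolically.
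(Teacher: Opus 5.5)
Your computations for the roots and for $\infty$ are correct; the paper gives no argument of its own here and simply quotes Proposition~2.3 of Nayak--Pal. The extraneous case, however, has a genuine gap. It comes from an algebraic slip, not from any missing normalization.

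When you multiply $L_p'=\frac{p''}{p'}+\frac{p}{p'}\cdot\frac{p'''}{p'}-2L_p\frac{p''}{p'}$ by $\frac{p}{p'}$, the middle term becomes $\frac{p^2p'''}{p'^3}$, not $\frac{pp'''}{p'^2}$; you lost one factor of $p/p'$. Using $pp''=-2p'^2$ at $e_0$ cannot rescue the expression you were left with. Replace $p$ by $p(\alpha z)$: then $L_p$, $L_{p'}$ and $\frac{p}{p'}L_p'$ (and hence all multipliers) are unchanged at corresponding points. By contrast, $\frac{pp'''}{p'^2}$ and $\frac{p''}{p'}$ are both multiplied by $\alpha$. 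So no identity can turn your term into a function of $L_{p'}(e_0)$ alone, which is exactly why the ratio $p''/p'$ looked uncontrolled.

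With the correct term the bookkeeping closes immediately, because
\[
L_p^2\,L_{p'}=\frac{p^2p''^2}{p'^4}\cdot\frac{p'p'''}{p''^2}=\frac{p^2p'''}{p'^3}.
\]
Hence $\frac{p}{p'}L_p'=L_p-2L_p^2+L_p^2L_{p'}$, and your formula for $C_p'$ becomes the global identity
\[
C_p'=\tfrac12L_p(1+L_p)-\tfrac12\bigl(L_p-2L_p^2+L_p^2L_{p'}\bigr)=\tfrac12L_p^2\bigl(3-L_{p'}\bigr).
\]
At an extraneous fixed point $L_p(e_0)=-2$. This forces $p''(e_0)\neq 0$, so $L_{p'}(e_0)$ is defined, and the identity gives $C_p'(e_0)=2\bigl(3-L_{p'}(e_0)\bigr)$. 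In your split, the second term is $-\tfrac12\bigl(-10+4L_{p'}(e_0)\bigr)=5-2L_{p'}(e_0)$, which added to the first term $1$ yields the claim.

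As a consistency check, at a root of multiplicity $k\ge 2$ take $L_p(z_0)=\frac{k-1}{k}$ and $L_{p'}(z_0)=\frac{k-2}{k-1}$. The same identity then reproduces your value $\frac{(k-1)(2k-1)}{2k^2}$.
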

It is important to note that an extraneous fixed point of Chebyshev's method may be non-repelling. In \cite{Nayak-Pal2022}, the family of cubic polynomials is parameterized by the multiplier of the extraneous fixed point arising from the associated Chebyshev's method. 

Chebyshev's method satisfies the Scaling property stated in the next lemma.
\begin{lem}\cite[Theorem~2.2]{Nayak-Pal2022}
Let $p$ be a polynomial of degree at least two. If $T(z) = \alpha z + \beta$ with $\alpha, \beta \in \mathbb{C}, \alpha \neq 0$, and $g = \lambda p \circ T$ for $\lambda \neq 0$, then 
	$T \circ C_g \circ T^{-1} = C_p.$ 
	\label{Scaling}
\end{lem}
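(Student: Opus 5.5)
The Scaling property is a purely algebraic identity, so I will verify it by direct computation. I will first show how the building blocks $p/p'$ and $L_p$ transform under $g=\lambda\, p\circ T$, and then substitute these into Equation~\eqref{Formula-C_p}.

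\textbf{Step 1: derivatives of $g$.} Write $T(z)=\alpha z+\beta$, so $T'(z)=\alpha$. By the chain rule,
\[
g'(z)=\lambda\alpha\, p'(T(z)), \qquad g''(z)=\lambda\alpha^2\, p''(T(z)).
\]
Hence
\[
\frac{g(z)}{g'(z)}=\frac{1}{\alpha}\,\frac{p(T(z))}{p'(T(z))}.
\]
For the logarithmic term, the factors $\lambda^2\alpha^2$ in numerator and denominator cancel, giving
\[
L_g(z)=\frac{g(z)g''(z)}{(g'(z))^2}=\frac{\lambda^2\alpha^2\, p(T(z))p''(T(z))}{\lambda^2\alpha^2\,(p'(T(z)))^2}=L_p(T(z)).
\]
So $L$ is invariant under the change of variables, and $g/g'$ picks up only the factor $1/\alpha$.

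\textbf{Step 2: substitute into the formula.} Using Step 1,
\[
C_g(z)=z-\Bigl(1+\tfrac12 L_p(T(z))\Bigr)\frac{1}{\alpha}\,\frac{p(T(z))}{p'(T(z))}.
\]
Applying $T$ multiplies by $\alpha$ and adds $\beta$, so
\[
T(C_g(z))=T(z)-\Bigl(1+\tfrac12 L_p(T(z))\Bigr)\frac{p(T(z))}{p'(T(z))}=C_p(T(z)).
\]
This gives $T\circ C_g=C_p\circ T$, equivalently $T\circ C_g\circ T^{-1}=C_p$.

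\textbf{Step 3: the point at infinity.} The identity above holds as rational functions on $\mathbb{C}$ minus finitely many points. Since both sides are rational maps of $\widehat{\mathbb{C}}$, it extends to all of $\widehat{\mathbb{C}}$, including the poles and $\infty$, where $T(\infty)=\infty$. The assumption $\deg p\ge 2$ ensures $p''\not\equiv 0$ is irrelevant to the argument and that $C_p$ is a genuine non-trivial rational map.

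There is no real obstacle here. The only care needed is in tracking the powers of $\alpha$ and $\lambda$, and in particular in checking that $L$ is exactly invariant because the constants cancel.
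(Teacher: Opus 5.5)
Your direct computation is correct, and it is the standard argument: $g/g'=\alpha^{-1}(p/p')\circ T$ and $L_g=L_p\circ T$ give $T\circ C_g=C_p\circ T$. The paper gives no proof of its own and simply cites Nayak--Pal, where the lemma is presumably proved by this same chain-rule check. The only blemish is the garbled remark about $\deg p\ge 2$ in Step~3; the identity only needs $p'\not\equiv 0$, so that both sides are rational maps agreeing off a finite set.
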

The Fatou and the Julia sets of $C_p$ and $C_g$ are \textit{essentially} the same.
Choosing suitable values of  $\alpha, \beta, \lambda$, it is possible to have $g$ in a  form much simpler than $p$. Then it becomes enough to study Chebyshev's method applied to $g$ in order to understand that of $p$. In order to understand Chebyshev's method applied to a polynomial $p (z)=A z(z^n+a)$ for any $ A,a\in\mathbb{C}\setminus\{0\}$, it is enough to study $C_n$. This can be seen by taking $\alpha$ such that $\alpha^n =-a$ and $\lambda =\frac{1}{A \alpha^{n+1}}$ in Lemma~\ref{Scaling}.

Another interesting situation arises in the context of symmetry groups. 
For an arbitrary polynomial $p$, it is possible to choose $\alpha, \beta, \lambda$ such that $\lambda p \circ T$ is \emph{normalized}. As mentioned in the introduction, the symmetry groups of a polynomial and its Chebyshev's method are related. A more precise statement is the following. 
\begin{lem}(\cite[Corollary 1.1.1]{Sym_dyn})\label{Sym_C_p}
Let $p$ be a normalized polynomial. Then $\Sigma p \subseteq \Sigma C_p.$
\end{lem}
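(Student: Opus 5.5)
This lemma is quoted from \cite[Corollary 1.1.1]{Sym_dyn}, so within this paper it is simply invoked. The following plan shows how one would prove it directly from the Scaling property (Lemma~\ref{Scaling}).

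Write the normalized polynomial as $p(z)=z^kP_0(z^m)$ with $k,m$ maximal, so that $\Sigma p=\{z\mapsto \lambda z:\lambda^m=1\}$. Fix $\lambda$ with $\lambda^m=1$ and put $T(z)=\lambda z$. Then
\[
p(T(z))=\lambda^k z^k P_0(\lambda^m z^m)=\lambda^k p(z).
\]
Hence $g:=\lambda^{-k}\,p\circ T$ equals $p$ itself.

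Lemma~\ref{Scaling}, applied with $\alpha=\lambda$, $\beta=0$ and scaling constant $\lambda^{-k}\neq 0$, gives $T\circ C_g\circ T^{-1}=C_p$. Since $g=p$, this reads $T\circ C_p\circ T^{-1}=C_p$, so $T$ commutes with $C_p$.

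Next one checks the standard fact that a Möbius map commuting with a rational map $R$ preserves $\mathcal{J}(R)$. The family $\{R^j\}$ is equicontinuous near $z$ if and only if $\{T\circ R^j\circ T^{-1}\}=\{R^j\}$ is equicontinuous near $T(z)$. This uses that $T$ is an isometry of the chordal metric, or simply a homeomorphism of $\widehat{\mathbb C}$, which preserves normality. Therefore $T(\mathcal F(C_p))=\mathcal F(C_p)$, and hence $T(\mathcal J(C_p))=\mathcal J(C_p)$.

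Finally, $T(z)=\lambda z$ with $|\lambda|=1$ is a holomorphic Euclidean isometry, so $T\in\Sigma C_p$. As $\lambda$ was an arbitrary $m$-th root of unity, $\Sigma p\subseteq\Sigma C_p$.

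No step is a serious obstacle. The only point needing care is the bookkeeping in the Scaling property: one must choose the constant so that $\lambda^{-k}p\circ T$ is exactly $p$, and not merely a multiple of it. This is immediate here because $k$ is an integer and $\lambda^m=1$.
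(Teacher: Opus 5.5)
Your proof is correct and takes essentially the same route as the cited result. The paper gives no argument beyond invoking \cite[Corollary 1.1.1]{Sym_dyn}, and, as its introduction explains, that corollary comes from the Scaling property exactly as you derive it: each rotation $T(z)=\lambda z$ in $\Sigma p$ satisfies $p\circ T=\lambda^{k}p$, so Lemma~\ref{Scaling} makes $T$ commute with $C_p$, and $T$ therefore preserves $\mathcal{J}(C_p)$.
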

It follows from \cite[Theorem~1.2]{Sym_dyn} that $\Sigma C_p$ does not contain any translation. Therefore, lemma~\ref{Sym_C_p} asserts that if $\Sigma p$ is non-trivial (i.e., if it contains at least one non-identity map) then $\Sigma C_p$ contains only rotations about the origin.	

\subsection{Some results on the dynamics of rational functions}\label{dyn_results}
	
A Fatou component $U$ of $R$ is called $k$-periodic if $k$ is the smallest natural number such that $R^k (U)=U$.

For a rational function $R$, every Fatou component $U$ is either periodic or pre-periodic (i.e., $R^n(U)$ is periodic for some $  n $). This is the famous No-Wandering Domain Theorem by Sullivan (see \cite[Theorem~8.1.2]{Beardon_book}). Furthermore, there are four types of periodic Fatou components of $R$.
\begin{itemize}
\item \textit{Attracting domain:} $U$ is called an attracting domain if it contains a $k$-periodic point $z_{0}$ such that $R^{nk}(z)\rightarrow z_{0}\ \mbox{as}\ n\rightarrow \infty \mbox{~for all}\ z\in U.$ If $z_0$ is superattracting then $U$ is called a superattracting domain.
\item \textit{Parabolic domain:} $U$ is said to be a parabolic domain if the boundary $\partial U$ contains a parabolic $k$-periodic point $z_{0}$ such that $R^{nk}(z)\rightarrow z_{0}$ as $n\rightarrow \infty$ for all $z\in U$. 
\item \textit{Siegel disk:} $U$ is a Siegel disk if there exists an analytic homeomorphism $\phi: U\rightarrow \{z:|z|<1\}$ such that $\phi \circ R^{k} \circ\phi^{-1}(z)=e^{2 \pi i \alpha} z$ for some irrational number $\alpha$. 
\item \textit{Herman ring:} $U$ is a Herman ring if there exists an analytic homeomorphism $\phi:U\rightarrow \{z:1<|z|<r\}$ such that $\phi \circ R^{k} \circ\phi^{-1}(z)=e^{2 \pi i \alpha} z$ for some irrational number $\alpha$.
	\end{itemize}
	
	Note that the   attracting and parabolic domains can be  simply or infinitely connected, whereas a Siegel disk is always simply connected and a Herman ring is always doubly connected.
	We need the following three well-known results in complex dynamics. The first result shows the relation between periodic Fatou components and critical points of $R$.  A critical point of $R$ is a point where the local degree of $R$ is at least two. For a finite critical point $c$, either $R'(c)=0$ or $c$ is a multiple pole of $R$.
 	
	\begin{lem} (\cite{Beardon_book}) \label{cpoint}
		Let $U$ be a periodic Fatou component of a rational map $R$.  
		If $U$ is an immediate attracting or parabolic basin, then $U$ contains at least one critical point of $R$. If $U$ is a Siegel disk or a Herman ring, then the boundary of $U$ is contained in the closure of the post-critical set   $\{R^n (c):   ~c~\mbox{is a critical point of}~R: n \geq 0\}$.
	\end{lem}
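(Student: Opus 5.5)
Although the paper cites this result from \cite{Beardon_book}, I would prove it by the classical argument: in each case, assume there are not enough critical points (or critical orbits) near $U$. Then inverse branches of iterates of $R$ exist on suitable domains. These branches form a normal family, and normality contradicts either the multiplier of the fixed point or the fact that $\partial U\subseteq\mathcal{J}(R)$. Throughout, replace $R$ by $R^k$, where $k$ is the period of $U$. Since a point is critical for $R^k$ only if some iterate of it is critical for $R$, it suffices to treat $k=1$. The exceptional case where the post-critical set has at most two points is excluded separately: then $R$ is conjugate to $z\mapsto z^{\pm d}$, which has no Siegel disks, Herman rings or parabolic basins, and for which the attracting basins visibly contain critical points.

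\emph{Attracting case.} If the fixed point $z_0\in U$ is superattracting, then $z_0$ itself is a critical point and there is nothing to prove. Otherwise $0<|\lambda_{z_0}|<1$. Suppose $U$ contains no critical point. Then $R:U\to U$ is a proper local homeomorphism, hence a covering map. Since $\mathcal{J}(R)$ is infinite, $U$ is hyperbolic. I would lift $R|_U$ to the universal cover $\mathbb{D}$ of $U$. A self-covering of $\mathbb{D}$ is a Möbius automorphism, and therefore $R$ is a local isometry of the hyperbolic metric of $U$. Hence the hyperbolic norm of $R'(z_0)$ is $1$, i.e. $|\lambda_{z_0}|=1$, a contradiction.

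\emph{Parabolic case.} I would take an attracting petal $P\subset U$ at the parabolic point $z^*\in\partial U$, with $R(P)\subset P$. Suppose $U$ has no critical point. Then, as above, $R:U\to U$ is a covering. Since $P$ is simply connected, all branches $g_m$ of $R^{-m}$ with $g_m(R^m(w_0))=w_0$, for a fixed $w_0\in P$, are defined on the simply connected domain $R(P)$. They take values in $U$, and hence omit $\mathcal{J}(R)$, so they form a normal family. On the other hand, in Fatou coordinates on $P$ the map $R$ is the translation $\zeta\mapsto\zeta+1$. Consequently the hyperbolic distance in $U$ between $R^m(w)$ and $R^{m+1}(w)$ tends to $0$, because the Euclidean step $|R^{m+1}(w)-R^m(w)|\sim c|R^m(w)-z^*|^{p+1}$ is much smaller than the distance $\operatorname{dist}(R^m(w),\partial U)\lesssim |R^m(w)-z^*|$. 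But $R$ is a local isometry, so this distance is constant along the orbit, and positive because $w$ is not fixed. This contradiction is the step I expect to be the main obstacle: one must check that a local isometry which is only a covering really preserves these distances. I would handle it by lifting a short geodesic and applying the deck-group/automorphism description on $\mathbb{D}$, possibly together with the Denjoy--Wolff theorem for the lifted automorphism.

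\emph{Rotation domains.} Suppose $\zeta\in\partial U$ lies outside the closure $\overline{P_c}$ of the post-critical set. Choose a disk $D$ around $\zeta$ with $\overline D\cap\overline{P_c}=\emptyset$. Then every branch of $R^{-m}$ is defined and univalent on $D$, and all such branches omit $\overline{P_c}$, which has at least three points. Hence they form a normal family. Choose a sequence $m_j$ such that $R^{m_j}\to\mathrm{id}$ locally uniformly on $U$; this is possible because $R|_U$ is conjugate to an irrational rotation. Select the branches $g_{m_j}$ that invert $R^{m_j}|_U$ on $D\cap U$. Passing to a subsequence, $g_{m_j}\to g$ with $g=\mathrm{id}$ on the open set $D\cap U$, so $g=\mathrm{id}$ on $D$. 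Then, by Hurwitz's theorem, $R^{m_j}=g_{m_j}^{-1}$ converges to the identity uniformly on a neighbourhood of $\zeta$. Using the rotation structure, this upgrades to normality of $\{R^m\}$ near $\zeta$: each $R^m$ can be written as $R^{m-m_j}\circ R^{m_j}$, where $R^{m-m_j}$ restricted to $U$ runs through a compact family of rotations. This shows $\zeta\in\mathcal{F}(R)$, contradicting $\zeta\in\partial U\subseteq\mathcal{J}(R)$. Hence $\partial U\subseteq\overline{P_c}$.
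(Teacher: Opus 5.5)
\textbf{Verdict.} The paper gives no proof of this lemma; it only cites Beardon's book, so your argument has to stand on its own. Your attracting case does: it is the standard hyperbolic-metric argument. There are, however, genuine gaps in the parabolic and rotation-domain cases.

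\textbf{Parabolic case.} This case rests on a false step. A covering $R\colon U\to U$ is a local isometry of $\rho_U$, but it does not preserve hyperbolic \emph{distance}. Schwarz--Pick only gives $d_U(R(a),R(b))\le d_U(a,b)$, and the inequality is strict whenever $a\neq b$ and $R(a)=R(b)$. So nothing forces $d_U(R^m(w),R^{m+1}(w))$ to be constant. It is merely non-increasing, and its convergence to $0$ contradicts nothing; the constancy you hope to get from the deck group or Denjoy--Wolff is not available.

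What a local isometry preserves exactly is the hyperbolic \emph{length} of curves. Take a segment $\beta\subset P$ from $w$ to $R(w)$; then $\ell_U(R^m\circ\beta)=\ell_U(\beta)>0$ for all $m$. In the Fatou coordinate, $R^m\circ\beta$ is a translate by $m$ of a fixed compact arc. Hence its Euclidean length is $O(|R^m(w)-z^*|^{p+1})$, and it lies in a sector about the attracting direction where $\operatorname{dist}(\cdot,\partial U)\gtrsim |R^m(w)-z^*|$. Since $\rho_U\lesssim 1/\operatorname{dist}(\cdot,\partial U)$, you get $\ell_U(R^m\circ\beta)\lesssim |R^m(w)-z^*|^{p}\to 0$, which is the contradiction.

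Three smaller points in this case:
\begin{itemize}
\item You need this \emph{lower} bound on the boundary distance. The bound $\operatorname{dist}(R^m(w),\partial U)\lesssim|R^m(w)-z^*|$ that you wrote is trivial and useless.
\item The family $\{g_m\}$ you set up is never used.
\item If the multiplier of $z^*$ is a $q$-th root of unity with $q>1$, you must pass to $R^q$ to get an invariant petal.
\end{itemize}
An alternative repair is to use the covering to extend the inverse Fatou parametrisation from a half-plane to a nonconstant holomorphic map $\mathbb{C}\to U$. This is impossible because $U$ is hyperbolic.

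\textbf{Rotation domains.} Everything up to $R^{m_j}\to\mathrm{id}$ uniformly on a neighbourhood $N$ of $\zeta$ is fine. (You can only normalise $g_{m_j}$ to invert $R^{m_j}|_U$ on one component of $D\cap U$, but that suffices.) The final ``upgrade to normality'' fails. In $R^m=R^{m-m_j}\circ R^{m_j}$ you control $R^{m-m_j}$ only on $U$, whereas $R^{m_j}(N)$ is a neighbourhood of $\zeta$ containing Julia points. So nothing about $R^m$ near $\zeta$ follows.

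You do not need normality of the whole family. Convergence $R^{m_j}\to\mathrm{id}$ near a Julia point already contradicts the density of repelling cycles in $\mathcal{J}(R)$. Let $q\in N$ be repelling of period $s$ and multiplier $\mu$. Since $R^{m_j}(q)\to q$ and the orbit of $q$ is finite, $R^{m_j}(q)=q$ for large $j$. Hence $s\mid m_j$ and $|(R^{m_j})'(q)|=|\mu|^{m_j/s}\to\infty$, while $(R^{m_j})'(q)\to 1$ by Weierstrass.

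\textbf{Reduction to $k=1$.} Passing to $R^k$ yields a critical point of $R$ somewhere in the cycle $U,R(U),\dots,R^{k-1}(U)$, not necessarily in $U$ itself. The literal wording of the lemma is false for non-fixed components (e.g.\ $z^2-1$). The cycle version is the correct statement and is all the paper uses.
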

	There is an interesting result relating the fixed points and the connectivity of the Julia set. 	A fixed point is called weakly repelling if it is either repelling or has multiplier $1$. Shishikura proved that if the Julia set of a rational map $R$ is disconnected, then $R$ has at least two weakly repelling fixed points (see \cite[Theorem~1]{Shishikura2009}). Through this result, the connectedness of the Julia set of Newton's method applied to polynomials is established.
	
	One of the main goals of the present work is to prove the connectedness of the Julia set of Chebyshev's method. By analysing the location of the poles, \cite[Lemma~4.3]{Nayak-Pal2022} and \cite[Lemma~3.5]{Sym_dyn} provide some sufficient conditions for the connectedness of the Julia set. In the following lemma, we present these two results together. By a Julia component, we mean a maximally connected subset of the Julia set.
	\begin{lem}(\cite[Lemma~4.3]{Nayak-Pal2022}, \cite[Lemma~3.5]{Sym_dyn}) \label{Jconnected}
		Let $R$ be a rational function for which $\infty$ is a repelling fixed point. Then the boundary of an unbounded immediate basin of attraction contains at least one pole. Furthermore, if all poles of $R$ lie on the unbounded Julia component, then the Julia set of $R$ is connected.
	\end{lem}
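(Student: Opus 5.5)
The statement has two parts, and I would treat them separately: the second is a clean backward-invariance argument, while the first needs a repelling-fixed-point argument near $\infty$.

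\textbf{Second assertion.} Let $J_\infty$ denote the component of $\mathcal{J}(R)$ containing $\infty$; this is the unbounded Julia component, and $\infty\in\mathcal{J}(R)$ because it is repelling.
\begin{enumerate}
\item Since $R$ is continuous and $R(\infty)=\infty$, the set $R(J_\infty)$ is a connected subset of $\mathcal{J}(R)$ containing $\infty$. Hence $R(J_\infty)\subseteq J_\infty$.
\item Let $K$ be any component of $R^{-1}(J_\infty)$. By the standard fact that a component of the preimage of a closed connected set under a rational map is mapped onto that set, $R(K)=J_\infty$.
\item In particular $\infty\in R(K)$, so $K$ contains a preimage of $\infty$, that is, $\infty$ itself or a pole.
\item By hypothesis all poles lie in $J_\infty$. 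Since $K$ is a connected subset of $\mathcal{J}(R)$ meeting $J_\infty$, we get $K\subseteq J_\infty$.
\item Therefore $R^{-1}(J_\infty)\subseteq J_\infty$, so $J_\infty$ is a closed, backward invariant subset of $\mathcal{J}(R)$ with infinitely many points.
\item By minimality of the Julia set (it is the closure of the backward orbit of any of its non-exceptional points), $\mathcal{J}(R)\subseteq J_\infty$. Hence $\mathcal{J}(R)=J_\infty$ is connected.
\end{enumerate}

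\textbf{First assertion.} Let $U$ be an unbounded immediate basin of a finite attracting (or parabolic) fixed point $z_0$, so $R(U)=U$. Suppose, for contradiction, that no pole lies on $\partial U$. No pole lies in $U$ either, since poles map to $\infty\in\mathcal{J}(R)$. So all poles are at positive distance from $\overline U$.
\begin{enumerate}
\item Since $\infty$ is repelling, choose a small disc neighbourhood $N$ of $\infty$ and a neighbourhood $N'\Subset N$ of $\infty$ with the following properties. $R^{-1}(N)$ is the union of $N'$ and small neighbourhoods of the poles, all disjoint from $\overline U$. The inverse branch $\varphi=(R|_{N'})^{-1}\colon N\to N'$ fixes $\infty$ and satisfies $\varphi^k\to\infty$ uniformly on $N$.
\item Consequently $U\cap R^{-1}(N)\subseteq N'$. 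Iterating, every point $w\in U$ with $R^j(w)\in N$ for some $j\ge1$ satisfies $R^i(w)\in N$ for all $0\le i\le j$, and $w=\varphi^j(R^j w)$.
\item Fix $z\in U\cap N$, which exists because $U$ is unbounded. Fix a path $\gamma\subset U$ from $z_0$ to $z$ avoiding the countably many critical values of the iterates.
\item Since $R^k|_U\colon U\to U$ is a proper branched covering and $z_0$ is fixed, lift $\gamma$ under $R^k$ starting at $z_0$. This gives a path $\Gamma_k\subset U$ from $z_0$ to a point $w_k$ with $R^k(w_k)=z$. By step 2, $w_k=\varphi^k(z)\to\infty$.
\item Each $\Gamma_k$ must cross the fixed compact curve $\partial N$ at some point $c_k\in U\cap\partial N$ such that the whole subarc of $\Gamma_k$ from $c_k$ to $w_k$ lies in $N$.
\item On that subarc, step 2 forces the tail of $\Gamma_k$ to be $\varphi^k$ applied to a subarc of $\gamma$. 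So it lies in $\varphi^k(N)$, which is a tiny neighbourhood of $\infty$ for large $k$.
\item This is incompatible with $\Gamma_k$ being connected and joining $c_k$, which stays on $\partial N$, to $w_k$ through $N$ while remaining inside $\varphi^k(N)$. This gives the contradiction.
\end{enumerate}

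\textbf{Main obstacle.} The delicate step is the bookkeeping in the first part: showing that the final portion of $\Gamma_k$, from its last exit of $\partial N$ onward, is exactly $\varphi^k$ of a portion of $\gamma$. This needs care about which points of $\gamma$ have forward orbits that stay in $N$.

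If this becomes messy, I would instead compare hyperbolic metrics. The map $R\colon U\cap N'\to U\cap N$ would be an unbranched covering, indeed a conformal isomorphism onto its image, with no pole to absorb the unbounded part of $U$. The contracting branch $\varphi$ would then map the "end" of $U$ at $\infty$ strictly into itself. Using the Schwarz–Pick lemma, this would contradict the fact that points of $U$ converge to $z_0$ under iteration.
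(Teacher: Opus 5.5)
Your proof of the second assertion is correct. Components of $R^{-1}(J_\infty)$ map onto $J_\infty$, each one contains $\infty$ or a pole and so lies in $J_\infty$, and the backward orbit of the non-exceptional point $\infty$ is dense in $\mathcal{J}(R)$. The paper only quotes this lemma from the cited works, so there is no in-paper proof to compare with.

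The first assertion, however, breaks at step 6, and the problem is not bookkeeping: the claim made there is false. The tail of $\Gamma_k$ begins at $c_k\in\partial N$, whereas $\varphi^k(N)\subseteq N'\subset N$ does not meet $\partial N$, so that tail cannot lie in $\varphi^k(N)$. In fact your own step 2 gives $R^k(c_k)\notin N$, since otherwise $c_k\in N$. So on the corresponding parameter interval $\gamma$ is \emph{not} inside $N$, and step 2 says nothing there. It only forces $\Gamma_k=\varphi^k\circ\gamma$ after the last time $\gamma$ itself meets $\partial N$, and that piece sits harmlessly inside $\varphi^k(\overline{N})$. Step 7 therefore has no contradiction to draw.

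No rearrangement of steps 3--7 can help, because they never use the fact that makes the statement true: $R|_U$ has degree at least $2$. Consider $R(z)=z/2$. There $\infty$ is repelling with multiplier $2$, $0$ is attracting, $U=\mathbb{C}$ is invariant and unbounded, and there are no poles at all. Your steps 1--5 go through verbatim with $\Gamma_k=2^k\gamma$, yet the conclusion fails.

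The missing idea makes steps 3--7 unnecessary. By Lemma~\ref{cpoint}, $U$ contains a critical point, so $R\colon U\to U$ is proper of degree $d\geq 2$, and every $w\in U$ has $d$ preimages in $U$ counted with multiplicity. But by your step 2, for $w\in U\cap N$ every preimage of $w$ in $U$ lies in $U\cap R^{-1}(N)\subseteq N'$, where $R$ is univalent. So $w$ has exactly one preimage in $U$, which is the contradiction.

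Your Schwarz--Pick fallback is too vague to stand on its own. Made precise, it reduces to this same degree count, since Schwarz--Pick is exactly what rules out $d=1$ for an attracting fixed point.
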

	The following lemma provides a useful criterion for the boundedness of the pre-images of an unbounded immediate attracting basin whenever the Julia set is locally connected.
	\begin{lem}(\cite[Lemma~3.3]{Sym_dyn})\label{bdd_pre-im}
		Let $R$ be a rational function with locally connected Julia set. Also assume that $\infty$ is a repelling fixed point of $R$. If $\mathcal{A}$ is an unbounded immediate basin corresponding to an attracting fixed point of $R$ then all pre-images of $\mathcal{A}$ are bounded.
	\end{lem}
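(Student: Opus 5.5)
The plan is to argue by contradiction: assume some component $U\neq\mathcal{A}$ of $R^{-k}(\mathcal{A})$ is unbounded, i.e. $\infty\in\partial U$, and exploit the fact that $R$ is a local homeomorphism near the repelling fixed point $\infty$. Since its multiplier is non-zero, $R$ is injective on a small closed disk $D$ around $\infty$, and the inverse branch $g$ of $R$ fixing $\infty$ maps $D$ into itself. First I reduce to the case $k=1$. If $\gamma\subset U$ is a curve landing at $\infty$, then $R(\gamma)$ is a curve in $R(U)$ landing at $R(\infty)=\infty$. Hence every forward image of an unbounded Fatou component is unbounded. Taking the last component before $\mathcal{A}$ in the forward orbit of $U$, I may therefore assume $R(U)=\mathcal{A}$ and $U\neq\mathcal{A}$.

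Next I use local connectivity of $\mathcal{J}(R)$ in two ways. First, the boundary of every Fatou component is locally connected, so $\infty$ is accessible from both $U$ and $\mathcal{A}$. Second, for every $\varepsilon>0$ only finitely many Fatou components have spherical diameter exceeding $\varepsilon$. I then consider \emph{accesses} to $\infty$ from the basin $\mathcal{B}$ of the attracting fixed point. An access is a homotopy class, rel endpoints, of curves in $\mathcal{B}$ that land at $\infty$ and whose tails lie in $D$. Each access determines the Fatou component containing it. Because $R|_D$ is a homeomorphism onto its image and $R^{-1}(\mathcal{B})=\mathcal{B}$, the assignment $\gamma\mapsto R(\gamma)$ induces an injective map $R_*$ on accesses, with inverse induced by $g$. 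So $R_*$ is a bijection of the set $\mathcal{S}$ of accesses.

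The key step, and the one I expect to be the main obstacle, is to show that $\mathcal{S}$ is finite. Fix a small round annulus $\{r<|z|<r'\}\subset D$. Two distinct accesses must be separated near $\infty$ by points of $\mathcal{J}(R)$ accumulating at $\infty$, or else they lie in different components. In either case each access yields a crossing of the annulus by a Fatou component, or a distinct crossing arc within one component. I would use the finiteness of large components, together with uniform local connectivity of $\mathcal{J}(R)$ (a consequence of local connectivity of a compact set), to bound the number of such crossings. The part needing care is to show that a single component cannot have infinitely many inequivalent crossings. I would handle this as in the standard proof that a locally connected Jordan-type boundary has finitely many accesses of definite size, arguing by contradiction and extracting Julia continua of definite diameter from infinitely many separating pieces.

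Granting finiteness, $R_*$ is a permutation of the finite set $\mathcal{S}$, so some iterate $R_*^m$ is the identity. Take an access $\alpha$ contained in $U$. Then $R^m$ maps the component $U$ to the component containing $R_*^m(\alpha)=\alpha$, namely $U$ itself, so $U$ is periodic. On the other hand $R(U)=\mathcal{A}$ and $R(\mathcal{A})=\mathcal{A}$, so $R^m(U)=\mathcal{A}$. Hence $U=\mathcal{A}$, a contradiction. Therefore every preimage of $\mathcal{A}$ other than $\mathcal{A}$ itself is bounded.
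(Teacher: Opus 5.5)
The paper only quotes this lemma from Nayak--Pal without proof, so your argument has to stand on its own. Several parts are fine: the reduction to $R(U)=\mathcal{A}$, the bijectivity of $R_*$, and your closing step. For the bijectivity, note that a locally connected Julia set is automatically connected, so every Fatou component is simply connected and accesses are unambiguous.

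\textbf{The gap.} The finiteness of $\mathcal{S}$ fails as argued, and this is not a technicality: it is essentially the lemma itself, and your annulus count cannot give it. A Fatou component lying entirely in the small disc on the $\infty$-side of the annulus contributes accesses to $\infty$ but no crossing. That is precisely the configuration you must exclude. Suppose $\gamma\subset U$ lands at $\infty$, and let $U_k$ be the component containing $g^k(\gamma)$. Then:
\begin{itemize}
\item the $U_k$ are pairwise distinct, since $U_j=U_k$ with $j<k$ would force $R^{k-j}(U)=U$, whereas $R^{k-j}(U)=\mathcal{A}$;
\item all of them have $\infty$ on their boundary;
\item by local connectivity their diameters tend to $0$.
\end{itemize}
So the infinitely many accesses $g_*^k(\alpha_U)$ are invisible to any fixed annulus, and shrinking the annulus does not help. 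More fundamentally, no argument using only (uniform) local connectivity can work. A point of a locally connected continuum can be accessible from infinitely many complementary components; nested circles tangent at one point give an example. Also, two distinct accesses from one component need not be separated by Julia set reaching a prescribed annulus. Dynamics has to enter.

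\textbf{The fix.} You only need finiteness of the accesses from the invariant component $\mathcal{A}$, and that does follow from dynamics.
\begin{itemize}
\item Let $\varphi:\mathbb{D}\to\mathcal{A}$ be the Riemann map sending $0$ to the attracting point. It extends continuously to $\overline{\mathbb{D}}$.
\item $B=\varphi^{-1}\circ R\circ\varphi$ is a Blaschke product of degree at least $2$ with $B(0)=0$, so $|B'|>1$ on $\partial\mathbb{D}$.
\item The set $\Theta=\{\theta\in\partial\mathbb{D}:\varphi(\theta)=\infty\}$ is closed and satisfies $B(\Theta)\subseteq\Theta$.
\item $B$ is injective on $\Theta$, by the same pull-back-by-$g$ argument that makes $R_*$ injective.
\item $\Theta_\infty=\bigcap_{m}B^m(\Theta)$ carries a positively expansive homeomorphism, hence is finite (a classical theorem).
\item Every $B$-orbit in $\Theta$ accumulates only on the repelling cycles in $\Theta_\infty$, and therefore lands in $\Theta_\infty$.
\item Injectivity on $\Theta$ then forces $\Theta=\Theta_\infty$.
\end{itemize}
Hence $R_*$ permutes the finite set of accesses of $\mathcal{A}$. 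So $R_*(\alpha_U)=R_*(\alpha')$ for some access $\alpha'$ of $\mathcal{A}$, and injectivity of $R_*$ gives $\alpha_U=\alpha'$, i.e.\ $U=\mathcal{A}$, a contradiction. This replaces both the finiteness of $\mathcal{S}$ and your periodicity argument.
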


	\section{Basic properties of  $C_n$}\label{S3_Fun-prop}
Recall that $$p_n(z)=z(z^n-1).$$
Throughout this article, we assume that $n>1$. An immediate and useful consequence of this assumption is that  $\Sigma p_n$ is non-trivial. The corresponding Chebyshev map is given by
	% Then  $p'(z)=(n+1)z^n-1, p''(z)=n(n+1)z^{n-1}~\mbox{and}~ p'''(z)=n(n+1)(n-1)z^{n-1}$. Consequently, 
	%Note that
	%$$	L_p(z)   =\frac{n(n+1)z^n(z^n-1)}{\left((n+1)z^n-1\right)^2},$$
%	and 
	\begin{align}
		C_n(z) &  = z-z(z^n-1)\left(\frac{(n+1)(3n+2) z^{2n}-(n+1)(n+4) z^n+2}
		{2\big((n+1)z^n-1\big)^3}\right) \label{eq:Cp}\\
		& =\frac{z^{n+1}\big(n(n+1)(2n+1)z^{2n}-2n(n+1)z^n-n(n-1)\big)}
		{2\big((n+1)z^n-1\big)^3}.  \label{eq:Cp2}
	\end{align}
Note that $C_n$ is an odd function when $n$ is even, whereas for odd $n$, $C_n$ is neither even nor odd.
\subsection{Extraneous fixed points}
 It follows from Equation~\eqref{eq:Cp} that the extraneous fixed points of $C_n$ are precisely the roots of  $(n+1)(3n+2)z^{2n}-(n+1)(n+4)z^n+2=0.$ These are given explicitly by one of the following equations
	\begin{equation}\label{ext_f1}
		z^n=\frac{(n+1)(n+4)- n \sqrt{(n+1)(n+9)}}{2(n+1)(3n+2)},
	\end{equation}
	\begin{equation}\label{ext_f2}
		z^n=\frac{(n+1)(n+4)+ n \sqrt{(n+1)(n+9)}}{2(n+1)(3n+2)}.
	\end{equation}
Thus, the number of finite extraneous fixed points of $C_n$ is $2n$. Moreover, as $(n+1)(n+4) - n \sqrt{(n+1)(n+9)} > 0$ for all $n > 1,$ each of  Equations~\eqref{ext_f1} and~\eqref{ext_f2} yields exactly one positive extraneous fixed point, and these are distinct. The following is immediate.
%	 Let $e_1$ and $e_2$ denote the smaller and larger of these positive extraneous fixed points, respectively. If $n$ is even, then $-e_1$ and $-e_2$ are also extraneous fixed points, whereas if $n$ is odd, then there are no other real extraneous fixed points. These observations are summarized in the following lemma.

\begin{lem}
	For each $n$, the map $C_n$ has $2n$ finite extraneous fixed points, exactly two of which are real and positive, and we denote these by $e_1$ and $e_2$ with $e_1 < e_2$. Moreover, the following hold.
	\begin{enumerate}
		\item If $n$ is even, then $-e_1$ and $-e_2$ are the only other real extraneous fixed points of $C_n$.
		\item If $n$ is odd, then there is no other real extraneous fixed point.
	\end{enumerate}
	\label{extraneous-general}
\end{lem}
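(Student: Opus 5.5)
The plan is to read off everything from the two explicit equations \eqref{ext_f1} and \eqref{ext_f2}. By Equation~\eqref{eq:Cp}, a finite extraneous fixed point is a finite fixed point that is not a root of $p_n$. Such points are exactly the zeros of
\[
Q(z)=(n+1)(3n+2)z^{2n}-(n+1)(n+4)z^n+2 ,
\]
provided no root of $p_n$ is also a zero of $Q$ (and no zero of $Q$ is a zero of the denominator $(n+1)z^n-1$). We check these exclusions directly. At $z=0$ we get $Q(0)=2\neq 0$. If $z^n=1$, then $Q=(n+1)(3n+2)-(n+1)(n+4)+2=2n^2+2n\neq0$. If $z^n=1/(n+1)$, then
\[
Q=\frac{3n+2}{n+1}-(n+4)+2=\frac{-n^2-n}{n+1}=-n\neq0 .
\]

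Next we count. As a quadratic in $w=z^n$, $Q$ has discriminant
\[
(n+1)^2(n+4)^2-8(n+1)(3n+2)=(n+1)\bigl((n+1)(n+4)^2-8(3n+2)\bigr)=(n+1)n^2(n+9)
\]
(expand: $n^3+9n^2+24n+16-24n-16$). This is positive, so there are two distinct real values $w_1<w_2$, given by \eqref{ext_f1} and \eqref{ext_f2}. Both are positive: their product is $2/((n+1)(3n+2))>0$ and their sum is positive. Each equation $z^n=w_i$ with $w_i\neq0$ has $n$ distinct solutions, and the two solution sets are disjoint since $w_1\neq w_2$. This gives exactly $2n$ finite extraneous fixed points.

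For the real ones, note that a real $z$ with $z^n=w_i>0$ satisfies $z=w_i^{1/n}$ when $n$ is odd. When $n$ is even it satisfies $z=\pm w_i^{1/n}$, and both signs occur. Setting $e_i=w_i^{1/n}$, monotonicity of $t\mapsto t^{1/n}$ on $(0,\infty)$ gives $e_1<e_2$. Hence $e_1,e_2$ are the only positive ones, and statements (1) and (2) follow.

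The only real obstacle is the bookkeeping that $Q$ shares no zero with $p_n$ or with the denominator, which we handled above. Without it, cancellation in Equation~\eqref{eq:Cp} could reduce the count below $2n$.
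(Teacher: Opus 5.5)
Your proof is correct and follows the paper's route. You treat the extraneous-fixed-point equation as a quadratic in $z^n$, note that both roots \eqref{ext_f1} and \eqref{ext_f2} are positive (by Vieta, where the paper checks $(n+1)(n+4)-n\sqrt{(n+1)(n+9)}>0$ directly), and take $n$-th roots; your non-cancellation check is a welcome addition that the paper omits. It contains two harmless arithmetic slips: at $z^n=1$ the quadratic equals $2n^2$ (not $2n^2+2n$), and at $z^n=\frac{1}{n+1}$ it equals $-\frac{n^2}{n+1}$ (not $-n$), but both values are still nonzero, so the conclusion stands.
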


%
% From Lemma~\ref{extraneous-general}, it follows that
%	$$e_1^n = \frac{(n+1)(n+4) - n \sqrt{(n+1)(n+9)}}{2(n+1)(3n+2)} \quad \text{and} \quad
%	e_2^n = \frac{(n+1)(n+4) + n \sqrt{(n+1)(n+9)}}{2(n+1)(3n+2)}.$$
Equation~\eqref{eq:Cp2} shows that $C_n$ has a unique positive pole, namely the positive root of $z^n =\frac{1}{n+1}$. Denoting this pole by $\xi$, we now determine its location relative to the positive extraneous fixed points. 
\begin{lem}
Let  $e_1$ and $e_2$ denote the two positive extraneous fixed points, and $\xi$ be the unique positive pole of $C_n$. Then $0<e_1<\xi<e_2<1$.
		\label{extraneous}
	\end{lem}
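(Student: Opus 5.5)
Since $z\mapsto z^n$ is strictly increasing on $(0,\infty)$, it is enough to compare the $n$-th powers. Set
$$a=e_1^n=\frac{(n+1)(n+4)- n \sqrt{(n+1)(n+9)}}{2(n+1)(3n+2)},\qquad b=e_2^n=\frac{(n+1)(n+4)+ n \sqrt{(n+1)(n+9)}}{2(n+1)(3n+2)},\qquad \xi^n=\frac{1}{n+1},$$
using Equations~\eqref{ext_f1}, \eqref{ext_f2} and the definition of $\xi$. We must show $0<a<\frac{1}{n+1}<b<1$. Positivity of $a$ was already noted before Lemma~\ref{extraneous-general}: it amounts to $(n+1)(n+4)^2>n^2(n+9)$, i.e. $(n+1)(n^2+8n+16)-n^3-9n^2=16+24n>0$.

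The cleanest route to the middle inequalities avoids the radicals. Let
$$q(t)=(n+1)(3n+2)t^2-(n+1)(n+4)t+2,$$
whose roots are exactly $a<b$. The point $t=\frac{1}{n+1}$ lies strictly between the roots if and only if $q\!\left(\frac1{n+1}\right)<0$. We compute
$$q\!\left(\tfrac{1}{n+1}\right)=\frac{3n+2}{n+1}-(n+4)+2=\frac{3n+2-(n+2)(n+1)}{n+1}=\frac{-n^2}{n+1}<0,$$
so $a<\xi^n<b$.

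For $b<1$, it suffices to show $q(1)>0$ and that the vertex $\frac{n+4}{2(3n+2)}$ of $q$ is less than $1$; together these place both roots below $1$. For the first,
$$q(1)=(n+1)(3n+2-n-4)+2=(n+1)(2n-2)+2=2n^2>0.$$
For the second, $n+4<6n+4$. Hence $b<1$.

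Taking $n$-th roots then gives $0<e_1<\xi<e_2<1$. There is no real obstacle here. The only point needing care is to use the sign of $q$ at the test points rather than squaring expressions that contain the surd. The positivity of $a$ could be argued the same way, from $q(0)=2>0$ together with the vertex being positive.
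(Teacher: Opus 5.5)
Your proof is correct, but it checks the key inequalities by a different mechanism than the paper. Both arguments begin by passing to $n$-th powers. The paper then works directly with the closed forms in Equations~\eqref{ext_f1} and~\eqref{ext_f2}, clearing the denominator $2(n+1)(3n+2)$. This turns $e_2^n<1$ into $\sqrt{(n+1)(n+9)}<5(n+1)$, which is settled by squaring, and $e_1^n<\xi^n<e_2^n$ into $-\sqrt{(n+1)(n+9)}<-(n-1)<\sqrt{(n+1)(n+9)}$. You never touch the surd. Instead you locate the test points $\frac{1}{n+1}$ and $1$ relative to the roots of $q$ from the sign of $q$ there, and the values $q\bigl(\frac{1}{n+1}\bigr)=-\frac{n^2}{n+1}$ and $q(1)=2n^2$ are pleasingly clean. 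The paper's route is the most immediate one once the explicit roots are written down. Yours avoids any sign bookkeeping before squaring, and it would work unchanged for comparing the roots with any other test value. Within your framework the vertex check is not even needed: since $q\bigl(\frac{1}{n+1}\bigr)<0<q(1)$, the intermediate value theorem gives a root of $q$ in $\bigl(\frac{1}{n+1},1\bigr)$, and because $a<\frac{1}{n+1}$ that root must be $b$.
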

\begin{proof}
Since $e_1>0$, it is enough to show that $e_2<1$ and $e_1< \xi <e_2$ for all $n> 1$.
		
To show $e_2<1$, it suffices to prove that $e_2^n<1$, that is, 
$$\frac{(n+1)(n+4)+ n \sqrt{(n+1)(n+9)}}{2(n+1)(3n+2)}<1.$$
Equivalently, $(n+1)(n+4)+ n \sqrt{(n+1)(n+9)}<2(n+1)(3n+2),$ i.e., 
$\sqrt{(n+1)(n+9)}<5(n+1).$ This simplifies to $24n+16>0$, which clearly holds for all $n>1$. 
		
		Since $\xi^n=\frac{1}{n+1}$, in order to prove $e_1<\xi<e_2$, it is enough to show that $e_1^n < \xi ^n< e_2^n$, i.e., 
		$$\frac{(n+1)(n+4)- n \sqrt{(n+1)(n+9)}}{2(n+1)(3n+2)}<\frac{1}{n+1}<\frac{(n+1)(n+4)+ n \sqrt{(n+1)(n+9)}}{2(n+1)(3n+2)}.$$
		Multiplying through by the positive quantity $2(n+1)(3n+2)$ gives 
		$$(n+1)(n+4)- n \sqrt{(n+1)(n+9)}<2(3n+2)<(n+1)(n+4)+ n \sqrt{(n+1)(n+9)}.$$ 
     This is equivalent to 
	 $ -n \sqrt{(n+1)(n+9)}<-n(n-1)< n \sqrt{(n+1)(n+9)} $, and equivalently, 
	 $ - \sqrt{(n+1)(n+9)}<-(n-1)< \sqrt{(n+1)(n+9)}.$  
	This clearly holds for all $n>1$. Hence, $e_1<\xi<e_2$.
	\end{proof}
Let $e_0$ be an extraneous fixed point. By Lemma~\ref{multiplier-fixedpoints}, its multiplier is 
	\begin{align}
		C_n'(e_0) =2\big(3-L_{p'}(e_0)\big) 
		 =\frac{2\left( (n+1)(2n+1)e_0^n+(n-1)\right)}{n(n+1)e_0^n}.
		 \label{ext_mult}
	\end{align}	
	We now determine the nature of the extraneous fixed points.
	\begin{lem}
		All extraneous fixed points of $C_n$ are repelling.
	\end{lem}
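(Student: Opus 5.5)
The multiplier formula \eqref{ext_mult} depends on $e_0$ only through $w:=e_0^n$. Moreover, by \eqref{ext_f1} and \eqref{ext_f2} this quantity $w$ is one of two positive reals, $w_1=e_1^n$ or $w_2=e_2^n$. So all $2n$ extraneous fixed points share only two multipliers, namely those of $e_1$ and $e_2$, and both are real. Writing
\[
C_n'(e_0)=\frac{2(2n+1)}{n}+\frac{2(n-1)}{n(n+1)w},
\]
we see that for $w>0$ both terms are positive. It therefore suffices to show that this expression exceeds $1$ for $w\in\{w_1,w_2\}$.

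This is in fact immediate, because the first term alone satisfies $\frac{2(2n+1)}{n}=4+\frac{2}{n}>4>1$. The second term is positive, since $n>1$ and $w>0$; positivity of $w_1$ was noted right after \eqref{ext_f2}. Hence $C_n'(e_0)>4$, and every extraneous fixed point is repelling.

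The only point requiring care is justifying that $e_0^n$ is real and positive for every extraneous fixed point $e_0$, not merely for $e_1$ and $e_2$. This follows because the extraneous fixed points are exactly the solutions of $z^n=w_1$ or $z^n=w_2$, and $w_1, w_2$ are positive by the positivity remark preceding Lemma~\ref{extraneous-general}. Beyond that, the argument is a one-line estimate, so I expect no real obstacle. If desired, one can also record the sharper bound $C_n'(e_0)>4+2/n$.
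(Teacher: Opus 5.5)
Your proof is correct and follows the paper's argument. You split \eqref{ext_mult} into $\frac{2(2n+1)}{n}=4+\frac{2}{n}$ plus a positive term, which gives a multiplier greater than $4$. The one difference is that you read off positivity of the second term directly from $e_0^n>0$, whereas the paper substitutes the explicit roots \eqref{ext_f1} and \eqref{ext_f2} case by case and rationalizes; this is a harmless streamlining.
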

	
	\begin{proof}
		First, suppose that $e_0$ satisfies Equation~\eqref{ext_f1}.
%		, i.e., 
%		$$e_0^n=\frac{(n+1)(n+4)-n\sqrt{(n+1)(n+9)}}{2(n+1)(3n+2)}.$$
		Then, by Equation~\eqref{ext_mult}, the multiplier of $e_0$ is $$C_n'(e_0)=\frac{2(2n+1)}{n}+\frac{4(n-1)(3n+2)}{n\left((n+1)(n+4)-n\sqrt{(n+1)(n+9)}\right)}.$$
		Rationalizing the denominator of the second term gives
		$$C_n'(e_0)=\frac{2(2n+1)}{n}+\frac{(n-1)\left((n+1)(n+4)+n\sqrt{(n+1)(n+9)}\right)}{2n(n+1)}.$$
		Since $\frac{2(2n+1)}{n}=4+\frac{2}{n}>4,$ and 	$\frac{(n-1)\left((n+1)(n+4)+n\sqrt{(n+1)(n+9)}\right)}{2n(n+1)}> 0$
		for all $n>1$, it follows that $C_n'(e_0)>4.$
		In particular, $C_n'(e_0)>1.$
		
		Now suppose that $\tilde{e}_0$ satisfies Equation~\eqref{ext_f2}.
		Then, its multiplier is
		$$C_n'(\tilde{e}_0)=\frac{2(2n+1)}{n}+\frac{4(n-1)(3n+2)}{n\left((n+1)(n+4)+n\sqrt{(n+1)(n+9)}\right)}.$$
		Moreover, for all $n>1$, we have
		$$\frac{2(2n+1)}{n}=4+\frac{2}{n}>4 ~\mbox{and}~
		 \frac{4(n-1)(3n+2)}{n\left((n+1)(n+4)+n\sqrt{(n+1)(n+9)}\right)}>0.$$
		Consequently, $C_n'(\tilde{e}_0)>4$.
		Therefore, every extraneous fixed point of $C_n$ is repelling.
	\end{proof}
 On the real axis, it follows from Equation~\eqref{eq:Cp} that
	\begin{align}
		C_n(x)-x =-\frac{(n+1)(3n+2)x(x^n-1)(x^n-e_1^n)(x^n-e_2^n)}{2(n+1)^3\big(x^n-\xi^n\big)^3}.
		\label{eq:CpR}
	\end{align}

The next two lemmas follow directly from Equation~\eqref{eq:CpR} (see Figure~\ref{Graph}). 
	
	\begin{lem}
		% Let $ e_1$ and $ e_2$ be the positive extraneous fixed points, and let $\xi$ be the positive pole of $C_n$. Then, for all $n>1$, 
		The following assertions hold for all $n$.
		\begin{enumerate}
			\item If $x \in (0,e_1)\cup(\xi,e_2) \cup(1, \infty)$, then $ C_n(x)<x$.
			\item If $x \in (e_1, \xi) \cup (e_2,1)$, then $ C_n(x)>x$.
			\item Furthermore, $\lim\limits_{x \to {\xi}^-}C_n(x)=+\infty  ~~\text{and}~~\lim\limits_{x \to {\xi}^+}C_n(x)=-\infty.$
		\end{enumerate}
		\label{C_p-real-all}
	\end{lem}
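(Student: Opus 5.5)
The plan is a sign analysis of the factored expression in Equation~\eqref{eq:CpR} on $(0,\infty)$. For $x>0$ put $t=x^n$; then $x>0$, and each factor $x^n-c$ with $c>0$ has the sign of $x-c^{1/n}$. Thus the sign of $C_n(x)-x$ is the sign of
$$
-\frac{(x-1)(x-e_1)(x-e_2)}{(x-\xi)^3}.
$$
The constant $(n+1)(3n+2)/\bigl(2(n+1)^3\bigr)$ is positive, so it does not affect the sign. The cube of $(x-\xi)$ has the same sign as $x-\xi$.

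Next we use the ordering $0<e_1<\xi<e_2<1$ from Lemma~\ref{extraneous}. The four critical values split $(0,\infty)$ into five intervals, and the sign of the product $(x-1)(x-e_1)(x-e_2)(x-\xi)^{-3}$ can be read off on each:
\begin{itemize}
\item on $(0,e_1)$ all four factors are negative, so the product is positive and $C_n(x)<x$;
\item crossing $e_1$ makes the product negative on $(e_1,\xi)$, so $C_n(x)>x$;
\item crossing $\xi$ makes it positive on $(\xi,e_2)$, so $C_n(x)<x$;
\item crossing $e_2$ makes it negative on $(e_2,1)$, so $C_n(x)>x$;
\item crossing $1$ makes it positive on $(1,\infty)$, so $C_n(x)<x$.
\end{itemize}
The alternation holds because each crossing point is a simple root, or a cubic (odd-order) root in the case of $\xi$, of the rational function. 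This proves parts 1 and 2.

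For part 3 we use Equation~\eqref{eq:Cp2} directly at $x=\xi$. The numerator is $\xi^{n+1}\bigl(n(n+1)(2n+1)\xi^{2n}-2n(n+1)\xi^n-n(n-1)\bigr)$. Substituting $\xi^n=1/(n+1)$ gives
$$
\frac{n(2n+1)}{n+1}-2n-n(n-1)=\frac{n(2n+1)-(n+1)(n^2+n)}{n+1}=\frac{-n^3-2n^2+n}{n+1},
$$
where $(n+1)(n^2+n)=n^3+2n^2+n$. This equals $-n(n^2+2n-1)/(n+1)$, which is negative for $n>1$, so the numerator is negative near $\xi$.

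The denominator is $2\bigl((n+1)x^n-1\bigr)^3$. It tends to $0$ through negative values as $x\to\xi^-$ and through positive values as $x\to\xi^+$. Hence $C_n(x)\to+\infty$ as $x\to\xi^-$ and $C_n(x)\to-\infty$ as $x\to\xi^+$.

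The only delicate points are keeping track of the factor $(x-\xi)^3$ and checking the sign of the numerator at the pole. Both are short computations.
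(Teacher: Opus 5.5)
Your proof is correct and is essentially the paper's. The paper reads all three assertions off the sign pattern of Equation~\eqref{eq:CpR}, using the ordering $0<e_1<\xi<e_2<1$ from Lemma~\ref{extraneous}, which is exactly your interval-by-interval check.

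There is one harmless arithmetic slip in part~3. With $\xi^n=\frac{1}{n+1}$ the bracket equals $\frac{n(2n+1)-(n^3+2n^2+n)}{n+1}=-\frac{n^3}{n+1}$, not $-\frac{n(n^2+2n-1)}{n+1}$. It is still negative, so your conclusion stands. Part~3 also follows at once from your parts~1 and~2: $C_n(x)-x$ blows up at the triple pole $\xi$ with the signs you found on $(e_1,\xi)$ and $(\xi,e_2)$.
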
   
	\begin{lem}
		The following statements hold for all $n$.
		\begin{enumerate}
			\item If $n$ is odd, then
			$ 
			C_n(x)>x \text{ for all } x\in(-\infty,0).
			$ 
			\item If $n$ is even, then
			 $C_n(x)>x \text{ for all } x\in(-\infty,-1)\cup(-e_2,-\xi)\cup(-e_1,0),$ 
			whereas
			 $C_n(x)<x \text{ for all } x\in(-1,-e_2)\cup(-\xi,-e_1).$ 
		\end{enumerate}
		\label{C_p-real-odd-even}
	\end{lem}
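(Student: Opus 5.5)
The lemma concerns the negative real axis, and I will reduce it entirely to a sign analysis of Equation~\eqref{eq:CpR}, as was done for Lemma~\ref{C_p-real-all}. For real $x$ we have
\[
C_n(x)-x=-\frac{(3n+2)}{2(n+1)^2}\cdot\frac{x(x^n-1)(x^n-e_1^n)(x^n-e_2^n)}{(x^n-\xi^n)^3}.
\]
The constant factor $-\frac{3n+2}{2(n+1)^2}$ is negative, so $C_n(x)>x$ exactly when
\[
Q(x)=\frac{x(x^n-1)(x^n-e_1^n)(x^n-e_2^n)}{(x^n-\xi^n)^3}<0 .
\]
Since the cube does not change sign, $Q(x)$ has the same sign as $x(x^n-1)(x^n-e_1^n)(x^n-e_2^n)(x^n-\xi^n)$. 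So I only need to track the sign of this product for $x<0$.

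\emph{Odd $n$.} For $x<0$ we have $x^n<0$. Since $1,e_1^n,e_2^n,\xi^n$ are all positive, each of the four factors $x^n-1,\ x^n-e_1^n,\ x^n-e_2^n,\ x^n-\xi^n$ is negative, and their product is positive. Multiplying by $x<0$ makes the whole product negative. Hence $Q(x)<0$ and $C_n(x)>x$ on all of $(-\infty,0)$. This also confirms that there are no real fixed points or poles on the negative axis, consistent with Lemma~\ref{extraneous-general}.

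\emph{Even $n$.} Here $x^n=|x|^n$, so each factor $x^n-c$ has the sign of $|x|-c^{1/n}$. By Lemma~\ref{extraneous}, $0<e_1<\xi<e_2<1$. I then list the intervals for $|x|$ in decreasing order and count the negative factors among the four bracketed terms:
\begin{itemize}
\item $|x|>1$: no negative factors, so the product of the brackets is positive;
\item $e_2<|x|<1$: one negative factor, so it is negative;
\item $\xi<|x|<e_2$: two, so positive;
\item $e_1<|x|<\xi$: three, so negative;
\item $0<|x|<e_1$: four, so positive.
\end{itemize}
Multiplying by $x<0$ flips each sign. Therefore $Q<0$, i.e.\ $C_n(x)>x$, on $(-\infty,-1)\cup(-e_2,-\xi)\cup(-e_1,0)$, and $Q>0$, i.e.\ $C_n(x)<x$, on $(-1,-e_2)\cup(-\xi,-e_1)$. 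This is exactly the claim.

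There is no real obstacle here. The only points needing care are the sign bookkeeping and the use of the ordering from Lemma~\ref{extraneous}; the odd-power pole factor counts once for sign purposes. Alternatively, for even $n$ one could use that $C_n$ is odd: then $C_n(-x)-(-x)=-(C_n(x)-x)$, and Lemma~\ref{C_p-real-all} yields the even case directly.
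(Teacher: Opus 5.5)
Your sign analysis of Equation~\eqref{eq:CpR} is correct and is the paper's approach: the paper simply asserts that the lemma follows directly from \eqref{eq:CpR}, and your count of negative factors, using $0<e_1<\xi<e_2<1$ from Lemma~\ref{extraneous}, carries that out in detail. Your shortcut for even $n$, via the oddness of $C_n$ together with Lemma~\ref{C_p-real-all}, is also valid; the only cosmetic issue is that your auxiliary function $Q$ clashes with the paper's $Q_n$, so use a different letter.
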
 
	
	%%%%%%%%%%%%%%%%%%%%%%%%%%%%%%%%%%%
%	\begin{figure}[h!]
%		\begin{subfigure}{.5\textwidth}
%			\centering
%			\includegraphics[width=1\linewidth]{Plot_C_p_n5}
%			\caption{$n=5$}
%		\end{subfigure}
%		\begin{subfigure}{.5\textwidth}
%			\centering
%			\includegraphics[width=1\linewidth]{Plot_C_p_n6}
%			\caption{$n=6$}
%		\end{subfigure}
%		\caption{The graphs of $C_n: \mathbb{R} \to \mathbb{R}$ for $p(z)=z(z^n-1).$}
%		\label{Graph}
%	\end{figure}
	%%%%%%%%%%%%%%%%%%%%%%%%%%%%%%%%%%%

	\begin{figure}[h!]
		\begin{subfigure}{.5\textwidth}
			\centering
			\includegraphics[width=1\linewidth]{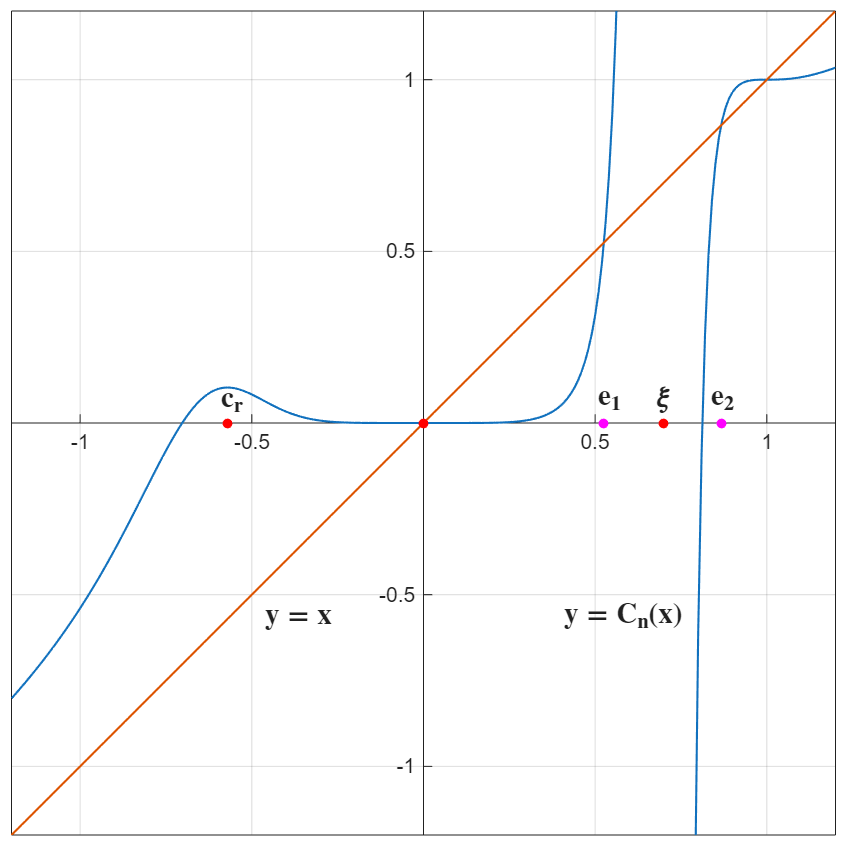}
			\caption{$n=5$}
		\end{subfigure}
		\begin{subfigure}{.5\textwidth}
			\centering
			\includegraphics[width=1\linewidth]{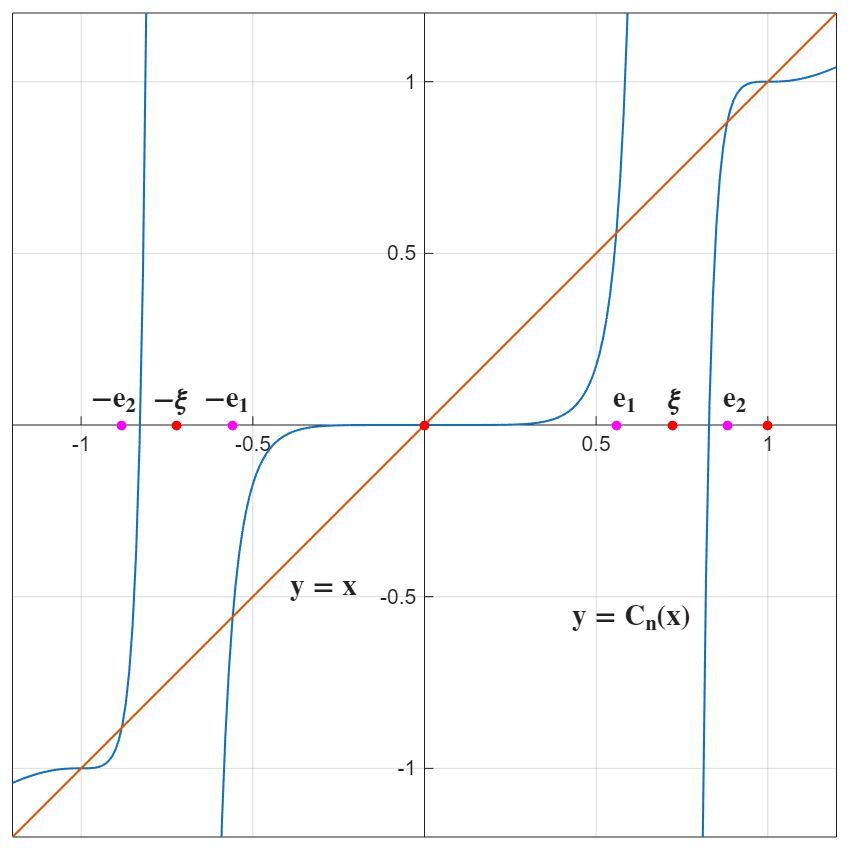}
			\caption{$n=6$}
		\end{subfigure}
		\caption{Graphs of $C_n: \mathbb{R} \to \mathbb{R}$.}
		\label{Graph}
	\end{figure}
	%%%%%%%%%%%%%%%%%%%%%%%%%%%%%%%%%%%
%	\begin{figure}[h!]
%		\centering
%		
%		\begin{subfigure}{0.8\textwidth}
%			\centering
%			\includegraphics[width=\linewidth]{Plot_C_p_n5}
%			\caption{$n=5$}
%		\end{subfigure}
%		
%		\vspace{0.3cm}
%		
%		\begin{subfigure}{0.8\textwidth}
%			\centering
%			\includegraphics[width=\linewidth]{Plot_C_p_n6}
%			\caption{$n=6$}
%		\end{subfigure}
%		
%		\caption{Graphs of $C_n:\mathbb{R}\to\mathbb{R}$ for $p(z)=z(z^n-1)$.}
%	\end{figure}
	%%%%%%%%%%%%%%%%%%%%%%%%%%%%%%%%%%%%%%%
\subsection{Critical points}  The derivative of $C_n$ is given by
\begin{equation}\label{Deri-C_n}
	C_n'(z)=\frac{n(n+1)z^n(z^n-1)^2((n+1)(2n+1)z^n+(n-1))}{2((n+1)z^n-1)^4}.
\end{equation}
Since the degree of $C_n$ is $3n+1$, it has $6n$ critical points, counted with multiplicity. The following lemma describes them all, and its proof follows directly from Equations~\eqref{eq:Cp2} and~\eqref{Deri-C_n}.
	\begin{lem}[Critical points]
	\begin{enumerate}
		\item The  origin is a critical point  of order $n$.
		\item Each $n$-th root of unity (i.e., non-zero root of $p$)  as well as each $n$-th root of $\frac{1}{n+1}$ (i.e., pole of $C_n$) is a critical point of order $2$.
		\item Each solution of the following equation is a simple critical point.
		\begin{equation}\label{free_cr_eqn}
			(n+1)(2n+1)z^n+(n-1)=0.
		\end{equation} 
	\end{enumerate}  
	\end{lem}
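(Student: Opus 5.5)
The lemma is read off from the factorizations of $C_n$ and $C_n'$ in Equations~\eqref{eq:Cp2} and~\eqref{Deri-C_n}. Since $\deg C_n = 3n+1$, Riemann--Hurwitz gives $2(3n+1)-2=6n$ critical points in $\widehat{\mathbb{C}}$, counted with multiplicity. I will list the critical points with their orders (local degree minus one) and show the orders add up to $6n$. That also shows $\infty$ is not critical, as expected since $\infty$ is a repelling fixed point (Lemma~\ref{multiplier-fixedpoints}).

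The finite critical points split into zeros of $C_n'$ that are not poles, and multiple poles of $C_n$.

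\textbf{Zeros of $C_n'$.} The numerator of \eqref{Deri-C_n} has three factors.
\begin{itemize}
\item The factor $z^n$ gives a zero of $C_n'$ of order $n$ at the origin. This is consistent with $C_n(z)=z^{n+1}\cdot(\text{unit})$ near $0$, from \eqref{eq:Cp2}, since the constant term there is $-n(n-1)\neq 0$. So the origin has local degree $n+1$, i.e.\ it is a critical point of order $n$.
\item The factor $(z^n-1)^2$ gives a double zero of $C_n'$ at each $n$-th root of unity. These roots are simple, so the local degree of $C_n$ there is $3$, matching the third-order convergence noted after \eqref{Formula-C_p}.
\item The factor $(n+1)(2n+1)z^n+(n-1)$ gives $n$ distinct simple zeros, the solutions of \eqref{free_cr_eqn}. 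They are distinct because $z^n$ equals a fixed nonzero constant.
\end{itemize}
None of these points is a pole or a root of another factor. The values of $z^n$ are $0$, $1$, $-\frac{n-1}{(n+1)(2n+1)}$ and, at the poles, $\frac1{n+1}$. These are pairwise different: the third is negative while $0$, $1$ and $\frac{1}{n+1}$ are not, and $\frac1{n+1}\neq 0,1$.

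\textbf{Poles.} The poles of $C_n$ are the $n$ roots of $z^n=\frac1{n+1}$. Each appears as a triple root of the denominator of \eqref{eq:Cp2}. I must check that the numerator does not vanish there. Substituting $z^n=\frac1{n+1}$ into the quadratic factor gives
\[
\frac{n(2n+1)}{n+1}-2n-n(n-1)=\frac{n\big((2n+1)-2(n+1)-(n-1)(n+1)\big)}{n+1}=-\frac{n(n^2)}{n+1}\neq 0,
\]
and $z^{n+1}\neq 0$. So each pole has order $3$, hence local degree $3$, i.e.\ it is a critical point of order $2$.

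\textbf{Count.} The orders sum to $n$ (origin) $+\,2n$ (roots of unity) $+\,2n$ (poles) $+\,n$ (solutions of \eqref{free_cr_eqn}) $=6n$. This exhausts the Riemann--Hurwitz total, so there are no further critical points, in particular none at $\infty$.

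The only step that needs care is the pole computation, which excludes cancellation between numerator and denominator. Everything else is immediate from the factorizations.
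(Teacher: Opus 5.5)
Your proof is correct and takes the same approach as the paper, which simply states that the lemma follows directly from the factorizations \eqref{eq:Cp2} and \eqref{Deri-C_n}. You also write out the details the paper leaves implicit: the non-cancellation check at the poles (the numerator factor equals $-\frac{n^3}{n+1}\neq 0$ there), the pairwise distinctness of the values of $z^n$, and the Riemann--Hurwitz count $n+2n+2n+n=6n$, which shows the list is exhaustive and that $\infty$ is not critical.
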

	
	None of the solutions of Equation~\eqref{free_cr_eqn} is a root of $p$ or a pole of $C_n$. We refer to these points as the \emph{free critical points} of $C_n$.  Note that all the free critical points have the same modulus, and their arguments  are $\frac{(2j+1)\pi}{n}$ for $j=0,1,2, \dots, n-1$. Indeed, if  $c$ is a free critical point, then 
	\begin{equation}
		 c^n=\frac{-(n-1)}{(n+1)(2n+1)}.
		 \label{freecriticalpoint}
	\end{equation} 
	Let 
	\begin{equation*}
		 c_0 := \left( \frac{n-1}{(n+1)(2n+1)}\right)^{\frac{1}{n}}\exp\left(\frac{i \pi}{n}\right).
	\end{equation*}  
 Then, using the notation above, Equation~\eqref{Deri-C_n} can be rewritten as 
\begin{equation}
	C_n '(z) =\frac{n(n+1)^2(2n+1)z^n(z^n-1)^2 (z^n-c_0 ^n)}{2((n+1)z^n-1)^4}.
	\label{Deri-C_n-reworded}
\end{equation}
It is observed from Equation~\eqref{free_cr_eqn} that there is no real free critical point of $C_n$ whenever $n$ is even. However, for $n$ odd, there is unique real critical point, say $c_r$ and that is negative. Equation~\eqref{Deri-C_n-reworded} leads to the following lemma.
\begin{lem}
	For each even $n$, $C_n$ is increasing in $\mathbb{R} \setminus \{\pm \xi\}$ where $\xi$ is the real positive pole. For each odd $n$, $C_n$ is increasing in $(-\infty, c_r) \cup (0, \xi) \cup (\xi, +\infty)$, and decreasing in $(c_r, 0)$.
	\label{Cn-increasing-decreasing}
\end{lem}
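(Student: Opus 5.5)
The plan is to read off monotonicity from the sign of $C_n'(x)$ for real $x$, using Equation~\eqref{Deri-C_n-reworded}:
$$C_n'(x)=\frac{n(n+1)^2(2n+1)\,x^n(x^n-1)^2\,(x^n-c_0^n)}{2\big((n+1)x^n-1\big)^4}.$$
On $\mathbb{R}$ minus the real poles, the denominator is strictly positive, being a fourth power of a nonzero real number. The factor $(x^n-1)^2$ is nonnegative and vanishes only at the real roots of unity, which are isolated. So the sign of $C_n'(x)$ is the sign of the product $x^n(x^n-c_0^n)$. By Equation~\eqref{freecriticalpoint},
$$c_0^n=-\frac{n-1}{(n+1)(2n+1)}<0,$$
so writing $\kappa=\frac{n-1}{(n+1)(2n+1)}>0$, this product is $x^n(x^n+\kappa)$.

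\emph{Even $n$.} Here $x^n\ge 0$ and $x^n+\kappa>0$, so $C_n'(x)\ge 0$ on $\mathbb{R}\setminus\{\pm\xi\}$. The zeros occur only at the isolated points $0$ and $\pm 1$. On each interval of $\mathbb{R}\setminus\{\pm\xi\}$ the function is therefore strictly increasing: a $C^1$ function with nonnegative derivative vanishing only at isolated points is strictly increasing.

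\emph{Odd $n$.} The only real pole is $\xi$, since $x^n=\frac{1}{n+1}$ has a unique real solution. The real free critical point satisfies $c_r^n=-\kappa$, so $c_r=-\kappa^{1/n}<0$. I would then split by sign.
\begin{itemize}
\item For $x>0$, both $x^n$ and $x^n+\kappa$ are positive, so $C_n'\ge 0$, with zeros only at $x=1$. Hence $C_n$ is increasing on $(0,\xi)$ and on $(\xi,\infty)$.
\item For $c_r<x<0$, we have $x^n<0$ and $x^n+\kappa>0$ (because $x^n>c_r^n=-\kappa$), so $C_n'<0$ and $C_n$ is decreasing.
\item For $x<c_r$, both factors are negative, so their product is positive and $C_n$ is increasing.
\end{itemize}

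There is no real obstacle here. The only points needing care are that $c_0^n$ is negative, which is what makes the factor $x^n+\kappa$ positive for even $n$, and that zeros of the derivative at isolated points such as $x=\pm1$ do not spoil strict monotonicity.
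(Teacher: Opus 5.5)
Your proof is correct and is the argument the paper intends. The paper only says that the lemma follows from the factored derivative in Equation~\eqref{Deri-C_n-reworded}, and you have carried out that sign analysis in full, correctly reading ``increasing on $\mathbb{R}\setminus\{\pm\xi\}$'' as strictly increasing on each interval between poles and noting that the isolated zeros of $C_n'$ (at $0$ and $\pm 1$) do not affect strict monotonicity.
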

The next lemma describes the location of all the free critical points.
\begin{lem}(Critical lines) Let $L_k = \{t \exp(\frac{k \pi i}{n}): t \in \mathbb{R}\}$ for $k=0,1,2,\cdots, n-1$.
	\begin{enumerate} 
	   \item If $n$ is odd, then each $ L_k$  contains exactly one free critical point. In fact, the free critical point is on $L_k ^+ := \{t \exp(\frac{k \pi i}{n}): t >0\}$.
	   \item If $n$ is even, then there are exactly two free critical points on $L_k$ for each odd $k$ and that are symmetric about the origin.
	\end{enumerate}
	\label{criticallines}
\end{lem}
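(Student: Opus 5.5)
The plan is to reduce the statement to the sign and argument structure of Equation~\eqref{freecriticalpoint}, namely
\[
c^n=-\frac{n-1}{(n+1)(2n+1)}.
\]
The right-hand side is a negative real number for every $n>1$. Hence the free critical points are exactly the $n$ points
\[
c_j=\rho\, e^{(2j+1)\pi i/n}, \qquad j=0,1,\dots,n-1,
\]
where $\rho=\left(\frac{n-1}{(n+1)(2n+1)}\right)^{1/n}>0$ is common to all of them. A point $t e^{k\pi i/n}$ with $t\neq 0$ lies on $L_k$, and $L_k$ consists of the two rays of arguments $\frac{k\pi}{n}$ and $\frac{k\pi}{n}+\pi=\frac{(k+n)\pi}{n}$ modulo $2\pi$. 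So the whole question reduces to arithmetic of the odd integers $2j+1$ modulo $2n$.

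Fix $k\in\{0,\dots,n-1\}$. The free critical point $c_j$ lies on $L_k^+$ exactly when $2j+1\equiv k \pmod{2n}$. It lies on the opposite ray exactly when $2j+1\equiv k+n \pmod{2n}$. Since $0\le k<n$ and $1\le 2j+1\le 2n-1$, these congruences are equalities: $2j+1=k$ or $2j+1=k+n$.

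\textbf{Odd $n$.} If $k$ is odd, then $k+n$ is even, so only $2j+1=k$ can hold. This gives exactly one $j$, and the point lies on $L_k^+$. If $k$ is even, then $k+n$ is odd and lies in $[n,2n-1]$, so exactly one $j$ satisfies $2j+1=k+n$. That point lies on the ray of argument $\frac{(k+n)\pi}{n}$.

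This is where the statement needs care. For even $k$, the critical point on $L_k$ sits at $-\rho e^{k\pi i/n}$, which is on the ray opposite to $e^{k\pi i/n}$. The case $k=0$ illustrates this: the critical point on the real line is negative, matching the earlier observation that $c_r<0$. So I would prove ``exactly one free critical point on each $L_k$'' as above. For the positive-ray claim, I would interpret $L_k^+$ as the ray through that critical point, namely the ray of argument $\frac{k\pi}{n}$ when $k$ is odd and $\frac{(k+n)\pi}{n}$ when $k$ is even. The main obstacle is only this bookkeeping, and I would state it explicitly. A count check confirms the result: the $n$ lines each receive one point, and there are $n$ free critical points in total.

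\textbf{Even $n$.} Now $k$ and $k+n$ have the same parity. If $k$ is even, neither $k$ nor $k+n$ is odd, so $L_k$ contains no free critical point. If $k$ is odd, both equations are solvable: $j=\frac{k-1}{2}$ and $j=\frac{k+n-1}{2}$. These give $\rho e^{k\pi i/n}$ and $-\rho e^{k\pi i/n}$, which are symmetric about the origin. As a consistency check, there are $n/2$ odd values of $k$, each contributing two points, for $n$ points in total.
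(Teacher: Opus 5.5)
Your argument is correct and is essentially the paper's, which gives no separate proof and simply reads the lemma off from the fact that the free critical points are the $n$-th roots of the negative real number in \eqref{freecriticalpoint}, with arguments $(2j+1)\pi/n$. You are also right that the clause ``on $L_k^+$'' in part~1 is false as stated when $k$ is even: for $k=0$ the point is $c_r<0$, as the paper itself notes and as its figure for $n=5$ shows with $c_2=c_r$ at angle $\pi$, so the correct statement is that the point lies on $L_k^+$ for odd $k$ and on $-L_k^+$ for even $k$.
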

 We refer to the lines containing the free critical points as \textit{critical lines}. Observe that there are $\frac{n}{2}$ critical lines when $n$ is even, whereas for odd $n$, there are $n$ critical lines.
The cases $n=5$ and $n=6$ are illustrated in Figure~\ref{L_k}. 
 
%%%%%%%%%%%%%%%%%%%%%%%%%%%%%%%%%%%%%%%%%%%%%%%%%%%%%%%%%%%%%%%%%%%
%%%%%%%%%%%%%%%%%%%%%%
%%%%%%%%%%%%%%%%%%%%%%
\begin{figure}[h!]
	\centering
	% ------------------------------------------------------------------
	% LEFT DIAGRAM (5-Root Geometry System - 5 Lines)
	% ------------------------------------------------------------------
	\begin{subfigure}[b]{0.48\textwidth}
		\centering
		\begin{tikzpicture}[scale=2.2]
			% Axes
			\draw[->, >=stealth, gray!40, thin] (-1.65,0) -- (1.65,0) node[right, scale=0.7] {X};
			\draw[->, >=stealth, gray!40, thin] (0,-1.65) -- (0,1.65) node[above, scale=0.7] {Y};
			
			% Concentric Circles
			\draw[blue, thick] (0,0) circle (1.0cm);
			\draw[red, thick] (0,0) circle (0.75cm);
			
			% 5 Full Lines (L_0 to L_4) across the origin, labeled only on the upper half
			\foreach \k in {0,1,2,3,4} {
				\def\angle{\k * 36}
				\draw[->,white!40!black, thin] (\angle-180:1.35) -- (\angle:1.35);
				\node[scale=0.75] at (\angle:1.47) {$L_{\k}$};
			}
			
			% Outer points (w_k) -> converted to blue squares
			\fill[blue] (0:1) +(-1pt,-1pt) rectangle +(1pt,1pt) node[above right, scale=0.8, xshift=-1pt] {$1$};
			\fill[blue] (72:1) +(-1pt,-1pt) rectangle +(1pt,1pt) node[above right, scale=0.8, yshift=-2pt] {$w_1$};
			\fill[blue] (144:1) +(-1pt,-1pt) rectangle +(1pt,1pt) node[above left, scale=0.8, xshift=-1pt, yshift=2pt] {$w_2$};
			\fill[blue] (216:1) +(-1pt,-1pt) rectangle +(1pt,1pt) node[below left, scale=0.8, xshift=-2pt, yshift=-2pt] {$w_3$};
			\fill[blue] (288:1) +(-1pt,-1pt) rectangle +(1pt,1pt) node[below right, scale=0.8, xshift=2pt, yshift=-1pt] {$w_4$};
			
			% Inner points (c_k) - Maintained as red circles
			\fill[red] (180:0.75) circle (1pt) node[below right, scale=0.7, xshift=-2pt, yshift=1pt] {$c_2=c_r$};
			\fill[red] (252:0.75) circle (1pt) node[above right, scale=0.7, xshift=-1pt] {$c_3$};
			\fill[red] (324:0.75) circle (1pt) node[above left, scale=0.7, xshift=2pt, yshift=4pt] {$c_4$};
			\fill[red] (36:0.75) circle (1pt) node[below left, scale=0.7, xshift=1pt, yshift=-1pt] {$c_0$};
			\fill[red] (108:0.75) circle (1pt) node[below left, scale=0.7, xshift=2pt, yshift=-2pt] {$c_1$};
			
		\end{tikzpicture}
		\caption{$n=5$}
	\end{subfigure}
	\hfill
	% ------------------------------------------------------------------
	% RIGHT DIAGRAM (6-Root Geometry System - 6 Lines)
	% ------------------------------------------------------------------
	\begin{subfigure}[b]{0.48\textwidth}
		\centering
		\begin{tikzpicture}[scale=2.2]
			% Axes
			\draw[->, >=stealth, gray!40, thin] (-1.65,0) -- (1.65,0) node[right, scale=0.7] {X};
			\draw[->, >=stealth, gray!40, thin] (0,-1.65) -- (0,1.65) node[above, scale=0.7] {Y};
			
			% Concentric Circles
			\draw[blue, thick] (0,0) circle (1.0cm);
			\draw[red, thick] (0,0) circle (0.75cm);
			
			% 6 Full Lines (L_0 to L_5) across the origin, labeled only on the upper half
			\foreach \k in {0,1,2,3,4,5} {
				\def\angle{\k * 30}
				\draw[->,white!40!black, thin] (\angle-180:1.35) -- (\angle:1.35);
				\node[scale=0.75] at (\angle:1.47) {$L_{\k}$};
			}
			
			% Outer points (w_k) -> converted to blue squares
			\fill[blue] (0:1) +(-1pt,-1pt) rectangle +(1pt,1pt) node[below right, scale=0.8, yshift=2pt] {$1$};
			\fill[blue] (60:1) +(-1pt,-1pt) rectangle +(1pt,1pt) node[above right, scale=0.8, yshift=-2pt] {$w_1$};
			\fill[blue] (120:1) +(-1pt,-1pt) rectangle +(1pt,1pt) node[above left, scale=0.8, yshift=-2pt] {$w_2$};
			
			% Adjusted w_3 to 182 degrees (shifted slightly up/left away from L_3 line)
			\fill[blue] (180:1) +(-1pt,-1pt) rectangle +(1pt,1pt) node[above left, scale=0.8, xshift=-4pt, yshift=-1pt] {$w_3$};
			
			\fill[blue] (240:1) +(-1pt,-1pt) rectangle +(1pt,1pt) node[below left, scale=0.8, yshift=-2pt] {$w_4$};
			\fill[blue] (300:1) +(-1pt,-1pt) rectangle +(1pt,1pt) node[below right, scale=0.8, yshift=2pt] {$w_5$};
			
			% Inner points (c_k) - Maintained as red circles
			\fill[red] (30:0.75) circle (1pt) node[below left, scale=0.7, xshift=2pt, yshift=-1pt] {$c_0$};
			\fill[red] (90:0.75) circle (1pt) node[below left, scale=0.7, xshift=2pt, yshift=-1pt] {$c_1$};
			\fill[red] (150:0.75) circle (1pt) node[below right, scale=0.7, xshift=-2pt, yshift=-1pt] {$c_2$};
			\fill[red] (210:0.75) circle (1pt) node[above right, scale=0.7, xshift=-2pt, yshift=1pt] {$c_3$};
			\fill[red] (270:0.75) circle (1pt) node[above right, scale=0.7, xshift=-2pt, yshift=1pt] {$c_4$};
			\fill[red] (330:0.75) circle (1pt) node[above left, scale=0.7, xshift=2pt, yshift=1pt] {$c_5$};
			
		\end{tikzpicture}
		\caption{$n=6$}
	\end{subfigure}
	
	\caption{The roots of $p_n$ and the free critical points of $C_n$ are represented by boxes and dots, respectively. For $n=5$, the five free critical points are on the lines $L_k, k=0,1,2,3,4$, one on each line, whereas for $n=6, $ each of the lines $L_1, L_3,$ and $L_5$ contains exactly two free critical points and that are symmetric with respect to the origin.}
	\label{L_k}
\end{figure}
%%%%%%%%%%%%%%%%%%%%%%%%%%%%%%
	%%%%%%%%%%%%%%%%%%%%%%%%%%%%%%%%%%%%%%%%%%%%%%%%%%%%%%%%%%%	

 Let $c^*$ denote the critical value corresponding to a free critical point $c$. Then
 	$$c^*=C_n (c) =\frac{ c^{n+1}\big(n(n+1)(2n+1)c^{2n}-2n(n+1)c^n-n(n-1)\big)}{2\big((n+1)c^n-1\big)^3}.$$
	This, along with Equation~\eqref{freecriticalpoint}, gives
\begin{equation}
	 c^* =-\frac{(n-1)^2(2n+1)}{27(n+1)^2}c.
	 \label{critical-point-value}
\end{equation} 
There is a key estimate of the free critical points and the corresponding critical values.	
\begin{lem}\label{c_and_c^*}
		If	$c^*$ is the critical value corresponding to a free critical point $c$ of $C_n$, then
		$$|c^*| \left\{\begin{array}{ccc}
			<|c| & \text { whenever } & n \leq 16, \\
			>|c| & \text { whenever } & n \geq 17.
		\end{array}\right.
		$$
	
\end{lem}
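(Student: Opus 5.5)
By Equation~\eqref{critical-point-value}, $c^* = -\frac{(n-1)^2(2n+1)}{27(n+1)^2}\,c$. Since $c\neq 0$ (because $c^n=-\frac{n-1}{(n+1)(2n+1)}\neq 0$ for $n>1$), we have
$$\frac{|c^*|}{|c|}=\frac{(n-1)^2(2n+1)}{27(n+1)^2}.$$
The lemma therefore reduces to comparing the polynomial $f(n):=(n-1)^2(2n+1)$ with $g(n):=27(n+1)^2$ on the integers $n\geq 2$. The claim is that $f(n)<g(n)$ for $2\leq n\leq 16$ and $f(n)>g(n)$ for $n\geq 17$.

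I would study $h(n):=f(n)-g(n)$. Expanding gives $f(n)=2n^3-3n^2+1$ and $g(n)=27n^2+54n+27$, so
$$h(n)=2n^3-30n^2-54n-26.$$
Direct evaluation gives $h(16)=8192-7680-864-26=-378<0$ and $h(17)=9826-8670-918-26=212>0$. This locates the sign change between $16$ and $17$.

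To conclude for all $n$, I would show that $h$ has exactly one positive real root. Since $h(0)=-26<0$, $h$ is a cubic with positive leading coefficient, and by Descartes' rule of signs it has exactly one sign change in its coefficients, it has a unique positive root $r$. Moreover $h<0$ on $(0,r)$ and $h>0$ on $(r,\infty)$. The evaluations above give $16<r<17$, so $h(n)<0$ for $2\leq n\leq 16$ and $h(n)>0$ for $n\geq 17$, which is exactly the statement.

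There is no real obstacle here. The only care needed is in the arithmetic at $n=16,17$ and in justifying that there is a single sign change, which Descartes' rule (or monotonicity of $h$ for $n\geq 17$, via $h'(n)=6n^2-60n-54>0$ there) handles cleanly.
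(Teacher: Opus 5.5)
Your proof is correct and follows the paper's argument: the same cubic $(n-1)^2(2n+1)-27(n+1)^2=2n^3-30n^2-54n-26$, the same values $-378$ and $212$ at $n=16$ and $n=17$, and a uniqueness-of-root argument. The only difference is how you get uniqueness. You use Descartes' rule of signs to get a single positive root, whereas the paper studies $T'(n)=6(n^2-10n-9)$ and checks that the local maximum $T(5-\sqrt{34})\approx -2.99$ is negative; your route is slightly cleaner, since it avoids evaluating at an irrational critical point and only concerns $n>0$, which is all that is needed.
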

	\begin{proof}
		By Equation~\eqref{critical-point-value}, $|c^*|=D_n |c|$, where $D_n=\frac{(n-1)^2(2n+1)}{27(n+1)^2}$. Define $T(n) =(n-1)^2(2n+1)-27(n+1)^2 $. Since  $T(16)=-378<0$ and $T(17)=212>0$, there exists a real root of $T$ in the interval $(16,17)$.
		Moreover, $T'(n)=6(n^2-10n-9)=6\big(n-(5-\sqrt{34})\big)\big(n-(5+\sqrt{34})\big)$.
		Hence, $T$ increases strictly in $(-\infty,5-\sqrt{34})$, attains a local maximum at $n=5-\sqrt{34}$, and then decreases strictly in $(5-\sqrt{34},5+\sqrt{34})$  attaining a local minimum at $n=5+\sqrt{34}$. Thereafter, it increases strictly. 
		
		As $T(5-\sqrt{34})=-796 + 136\sqrt{34} \approx -2.99 <0$, the local maximum of $T$ is negative. This implies that the root lying in the interval $(16,17)$ is the only real root of $T$. Consequently, $T(n)<0$ for all $n$ smaller than this root, and $T(n)>0$ for all $n$ larger than this root.
		In particular,
		$$
		T(n) \left\{\begin{array}{ccc}
			<0 & \text { for all } & n \leq 16, \\
			>0 & \text { for all } & n\geq 17.
		\end{array}\right.
		$$
		Equivalently,
		$$
		D_n \left\{\begin{array}{ccc}
			<1 & \text { whenever } & n \leq 16, \\
			>1 & \text { whenever } & n\geq 17,
		\end{array}\right.
	~\mbox{and therefore,}~
		|c^*| \left\{\begin{array}{ccc}
			<|c| & \text { whenever } & n \leq 16, \\
			>|c| & \text { whenever } & n\geq 17.
		\end{array}\right.
		$$
		This completes the proof.
	\end{proof}
	Recall that, for odd $n$, there is unique real free critical point $c_r$ of $C_n$.  The corresponding  critical value $c_r^*$  satisfies
\begin{equation}
	 (c_r^*)^n =\frac{(n-1)^{2n+1}(2n+1)^{n-1}}{27^n(n+1)^{2n+1}}.
	 \label{realcriticalvalue}
\end{equation}
For odd $n$, the relative location of real free critical value and extraneous fixed points is given in the next proposition.
	\begin{prop}\label{Odd_free_cr}
		Let $n$ be an odd natural number. Let $c_r^*$ be the critical value corresponding to the real free critical point $c_r$ and let $e_1$ and $e_2$ be the real extraneous fixed points with $e_1<e_2$.
		\begin{enumerate}
			\item If $n\leq 15$, then $c_r^*<e_1$, and if $n\geq 19$, then $c_r^*>e_2$.
			\item If $n=17$, then $c_r ^* > e_1$ and  $C_n (c_r ^*)>1.$
		\end{enumerate}
	\end{prop}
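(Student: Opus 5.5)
Since $n$ is odd, $c_r$ is the unique real solution of $(n+1)(2n+1)z^n=-(n-1)$, which is negative. By Equation~\eqref{critical-point-value}, $c_r^*=-D_n c_r>0$, where $D_n=\frac{(n-1)^2(2n+1)}{27(n+1)^2}$. So all three numbers $c_r^*$, $e_1$, $e_2$ are positive, and comparing them is equivalent to comparing their $n$-th powers. We have $(c_r^*)^n$ from Equation~\eqref{realcriticalvalue}, and $e_1^n,e_2^n$ from Equations~\eqref{ext_f1} and~\eqref{ext_f2}. The plan is to define
$$a_n:=(c_r^*)^n=\frac{n-1}{(n+1)(2n+1)}\,D_n^{\,n},$$
and to compare $a_n$ with $e_1^n$ and $e_2^n$. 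Both $e_1^n$ and $e_2^n$ are algebraic expressions of size $O(1)$; in fact $e_1^n\to 1/6$ and $e_2^n\to 1/3$ roughly, with $e_1^n\sim$ const and $e_2^n<1$ by Lemma~\ref{extraneous}. The behaviour of $a_n$ is governed by $D_n^n$.

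For $n\le 15$: Lemma~\ref{c_and_c^*} gives $D_n<1$. I will bound $a_n\le \frac{n-1}{(n+1)(2n+1)}D_n^n$ and use that $e_1^n$ is bounded below by an explicit quantity. Rationalizing gives
$$e_1^n=\frac{2(3n+2)}{(n+1)(n+4)+n\sqrt{(n+1)(n+9)}}\ \ge\ \frac{2(3n+2)}{(n+1)(2n+5)},$$
using $\sqrt{(n+1)(n+9)}\le n+5$. It then suffices to check
$$(n-1)(2n+5)D_n^n<2(2n+1)(3n+2),$$
which holds trivially because $D_n<1$ and $(n-1)(2n+5)<2(2n+1)(3n+2)$. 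So this part is easy and needs no case check.

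For $n\ge 19$: here $D_n\ge D_{19}=\frac{18^2\cdot 39}{27\cdot 400}=1.17$, and $D_n$ is increasing in $n$, since $D_n$ is a product of increasing factors $\left(\frac{n-1}{n+1}\right)^2$ and $(2n+1)$. Since $e_2^n<1$, it suffices to show $a_n>1$, i.e. $D_n^n>\frac{(n+1)(2n+1)}{n-1}$. At $n=19$ this reads $1.17^{19}\approx 19.7$ against $\frac{20\cdot 39}{18}\approx 43.3$, which fails. So the crude bound $e_2^n<1$ is not enough, and I will instead use the sharper estimate $e_2^n\le \frac{(n+1)(n+4)+n(n+5)}{2(n+1)(3n+2)}\approx\frac13$. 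I will then verify $a_{19}>e_2^{19}$ numerically. For the induction step I note that the ratio $a_{n+2}/a_n$ is at least $D_{n+2}^2\cdot D_{n+2}^n/D_n^n\cdot(\text{rational factor}\approx 1)>1.3$, while $e_2^n$ is decreasing toward $1/3$. This gives $a_n>e_2^n$ for all odd $n\ge 19$.

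The main obstacle is this tail estimate near $n=19$, where the margins are modest. I would either check $n=19,21,23$ numerically and then use a clean bound $a_n\ge c\,D_{25}^n/n$ beyond that, or prove monotonicity of $a_n$ directly via $\log a_n$.

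For $n=17$: I will compute numerically. $D_{17}=\frac{256\cdot 35}{27\cdot 324}\approx 1.0243$, so $c_r^*=D_{17}|c_r|$ with $|c_r|^{17}=\frac{16}{18\cdot 35}\approx 0.0254$. Then $|c_r|\approx 0.806$ and $c_r^*\approx 0.8256$. From Equation~\eqref{ext_f1}, $e_1^{17}=\frac{378-17\sqrt{468}}{2\cdot 18\cdot 53}\approx 0.00542$, so $e_1\approx 0.736<c_r^*$. Also $\xi^{17}=1/18$ gives $\xi\approx 0.8436>c_r^*$, so $c_r^*\in(e_1,\xi)$. Finally I evaluate $C_{17}(c_r^*)$ from Equation~\eqref{eq:Cp2}. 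With $z^{17}\approx 0.0385$, the denominator is $2(18\cdot 0.0385-1)^3\approx 2(-0.307)^3\approx -0.058$. The numerator is $z^{18}(\dots)$, with bracket $\approx 17\cdot 18\cdot 35\cdot 0.00148-612\cdot 0.0385-272\approx -279.7$, times $z^{18}\approx 0.0318$, giving $\approx -8.9$. Hence $C_{17}(c_r^*)\approx 150>1$. To keep this rigorous I will replace the decimals by explicit rational enclosures of $c_r^*$.
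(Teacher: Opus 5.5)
Your cases $n=17$ and $n\ge 19$ are sound and follow the paper's structure. For $n\ge19$ you combine monotonicity of $a_n$ along odd $n$, a decreasing upper bound for $e_2^n$, and the base check $a_{19}\approx 0.456>0.388$. Your ratio bound $a_{n+2}/a_n\ge D_{21}^2\cdot\frac{39}{43}>1.5$ is a valid substitute for the paper's Bernoulli argument in Lemma~\ref{fn_S}.

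The gap is the case $n\le 15$, and it comes from a wrong rationalization. Multiplying out gives $(n+1)^2(n+4)^2-n^2(n+1)(n+9)=8(n+1)(3n+2)$, so
$$e_1^n=\frac{4}{(n+1)(n+4)+n\sqrt{(n+1)(n+9)}},$$
not $\frac{2(3n+2)}{(\cdots)}$. Your formula is too large by a factor $(3n+2)/2$. In particular $e_1^n\sim 2/n^2\to 0$; it does not tend to a constant near $1/6$. Separately, $\sqrt{(n+1)(n+9)}\le n+5$ bounds the denominator above by $2n^2+10n+4$, which exceeds $(n+1)(2n+5)$, so your lower bound for $e_1^n$ also goes the wrong way.

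With the correct formula, $D_n<1$ alone is far from sufficient. It would require $\frac{n-1}{(n+1)(2n+1)}<e_1^n$, and this already fails at $n=5$ ($4/66\approx 0.061$ versus $e_1^5\approx 0.040$). It fails badly at $n=15$ ($14/496\approx 0.028$ versus $e_1^{15}\approx 0.0067$). So the claim that this part ``needs no case check'' is false.

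What actually makes $n\le 15$ work is the quantitative smallness of $D_n^n$. For example, $D_{15}^{15}\approx 0.145$ gives $a_{15}\approx 0.0041<0.0067$, a margin of only about $1.6$. You therefore need one of two things:
\begin{enumerate}
\item an explicit numerical verification for each odd $n\in\{3,5,\dots,15\}$; or
\item the paper's route: show $a_n=S(n)$ is increasing for $n\ge 5$ (Lemma~\ref{fn_S}) and $E_1(n)=e_1^n$ is decreasing (immediate from the correct formula above, Lemma~\ref{E_decreasing}). Then $S(n)\le S(15)<E_1(15)\le E_1(n)$ for odd $5\le n\le 15$.
\end{enumerate}
On the second route, $n=3$ must be checked separately, because $S$ is not monotone there: $S(2)>S(3)>S(4)>S(5)$.
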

The next two lemmas hold for all $n>1$, and will be used to prove the Proposition~\ref{Odd_free_cr}. The first one analyses the variation of $(c_r ^*)^n$ for odd $n$, whereas the second captures the change of the $n$-th powers of the positive extraneous fixed points $e_1$ and $e_2$.
	\begin{lem}\label{fn_S}
		Let $S(n)=\frac{(n-1)^{2n+1}(2n+1)^{n-1}}{27^n(n+1)^{2n+1}},$ where $n$ is a positive integer. Then for $n\geq 5$, $S(n+1)>S(n)$. Moreover, the function $S$ satisfies the following relation: $S(2)>S(3)>S(4)>S(5)$.
	\end{lem}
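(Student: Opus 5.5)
The plan is to study $\log S$ and its forward difference. Write
$$\log S(n) = (2n+1)\log\frac{n-1}{n+1} + (n-1)\log(2n+1) - n\log 27 ,$$
and set $\Delta(n) = \log S(n+1) - \log S(n)$. The goal is to show $\Delta(n) > 0$ for $n \ge 5$, and then to verify the finitely many inequalities $S(2) > S(3) > S(4) > S(5)$ by direct evaluation.

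First I expand the difference:
\begin{align*}
\Delta(n) &= (2n+3)\log\frac{n}{n+2} - (2n+1)\log\frac{n-1}{n+1} + n\log(2n+3) - (n-1)\log(2n+1) - \log 27.
\end{align*}
I regroup it as
$$\Delta(n) = \log\frac{2n+3}{27} + (n-1)\log\frac{2n+3}{2n+1} + \Big[(2n+3)\log\frac{n}{n+2} - (2n+1)\log\frac{n-1}{n+1}\Big].$$
The three pieces behave as follows.
\begin{enumerate}
\item The first piece $\log\frac{2n+3}{27}$ is the driving term: it grows like $\log n$.
\item The second piece is positive and tends to $1$.
\item The bracket tends to a constant. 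Using $\log\frac{m-1}{m+1} = -2\big(\frac1m + \frac1{3m^3} + \cdots\big)$, one gets $(2m+1)\log\frac{m-1}{m+1} = -4 - \frac{2}{m} + O(m^{-2})$. Applying this at $m = n+1$ and at $m = n$, the bracket is $O(1/n^2)$ and in fact small.
\end{enumerate}
So $\Delta(n) \approx \log\frac{2n+3}{27} + 1$, which is positive once $2n+3 > 27/e \approx 9.93$, that is for $n \ge 4$. The margin at $n = 5$ is about $\log(13/27) + 1 \approx 0.27$.

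To make this rigorous, I bound each piece explicitly.
\begin{enumerate}
\item For the second piece I use $\log(1+x) \ge x - x^2/2$ with $x = \frac{2}{2n+1}$.
\item For the bracket I use the series with alternating-free positive terms, which gives two-sided bounds. Concretely, $-4 - \frac2m - \frac{c}{m^2} \le (2m+1)\log\frac{m-1}{m+1} \le -4 - \frac2m$ for some explicit constant $c$, valid for $m \ge 5$.
\end{enumerate}
Combining these, I get $\Delta(n) \ge \log\frac{2n+3}{27} + 1 - \frac{C}{n}$ for $n \ge 5$, with an explicit $C$. This settles all $n \ge N_0$ for a modest explicit $N_0$. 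For $5 \le n < N_0$, I verify $\Delta(n) > 0$ numerically, since these are finitely many direct evaluations.

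For the second assertion I compute $S(2), S(3), S(4), S(5)$ directly. From the formula, $S(2) = \frac{5}{27^2\cdot 3^5}$ and $S(3) = \frac{2^7\cdot 7^2}{27^3\cdot 4^7}$, and similarly for $S(4)$ and $S(5)$. I then compare consecutive values by cross-multiplying integers, or equivalently by checking $\Delta(2), \Delta(3), \Delta(4) < 0$.

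The main obstacle is keeping the error bounds for the bracket tight enough that $N_0$ is small. The margin near $n = 5$ is only about $0.27$, so crude bounds would leave many cases to check by hand. If this proves messy, my fallback is to treat $n$ as a real variable and show $\Delta$ is increasing. Its derivative is dominated by $\frac{2}{2n+3}$ plus $O(n^{-2})$ terms. Then I only need $\Delta(5) > 0$ numerically.
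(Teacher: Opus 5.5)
Your proposal is correct, and it follows a different route from the paper. The paper works with the ratio directly. It writes $\frac{S(n+1)}{S(n)}=\frac{n^2(2n+1)}{27(n+2)^2}A_nB_n$, where $A_n=\bigl(\frac{n(n+1)}{(n-1)(n+2)}\bigr)^{2n+1}$ and $B_n=\bigl(\frac{2n+3}{2n+1}\bigr)^{n}$. It bounds $A_n$ and $B_n$ from below by Bernoulli's inequality, which reduces the claim to the polynomial inequality $4n^5-6n^4-130n^3-162n^2+108n+216>0$, valid for $n\ge 7$. The cases $n=5,6$ and the chain $S(2)>S(3)>S(4)>S(5)$ are read off from a table of approximate values.

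The paper's route is purely algebraic after Bernoulli, but it is lossy. For instance, Bernoulli gives only $B_n>1+\frac{2n}{2n+1}\to 2$, whereas $B_n\to e$, so the polynomial inequality only takes over at $n=7$. Your logarithmic decomposition keeps that constant (it is the $+1$ in $\log\frac{2n+3}{27}+1$), so it is asymptotically sharper, at the cost of controlling logarithmic series remainders. Both arguments need a direct check at the bottom of the range. Your exact integer comparison for $S(2),\dots,S(5)$ is, if anything, cleaner than the paper's table.

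Your heuristic numbers are off, although this does not damage the rigorous plan.
\begin{itemize}
\item At $n=5$ the second piece is $4\log\frac{13}{11}\approx 0.668$, not close to $1$; it behaves like $1-\frac{2}{n}$. So the true margin is $\Delta(5)\approx 0.023$, i.e.\ $S(6)/S(5)\approx 1.024$, not $0.27$.
\item The claim that $\Delta(n)>0$ for $n\ge 4$ is false. In fact $\Delta(4)\approx -0.159$, exactly as the lemma's own assertion $S(4)>S(5)$ requires.
\end{itemize}
Consequently the case $n=5$ must be checked with accuracy better than about $2\%$, or by exact arithmetic.

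You can also streamline the rigorous part.
\begin{itemize}
\item The function $g(m)=(2m+1)\log\frac{m-1}{m+1}=-4-\frac{2}{m}-\sum_{k\ge 1}\frac{2}{2k+1}\bigl(\frac{2}{m^{2k}}+\frac{1}{m^{2k+1}}\bigr)$ is strictly increasing, so your bracket $g(n+1)-g(n)$ is positive.
\item The bound $\log(1+x)\ge x-\frac{x^2}{2}$ gives that the second piece is at least $\frac{4n(n-1)}{(2n+1)^2}\ge 1-\frac{2}{n}$.
\end{itemize}
Hence $\Delta(n)>\log\frac{2n+3}{27}+1-\frac{2}{n}$. This lower bound is increasing in $n$ and already positive at $n=6$ (about $0.079$), so only $n=5$ remains to be verified directly.
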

	
	\begin{proof}
		Note that
%		$$S(n+1)=\frac{n^{2n+3}(2n+3)^n}{27^{n+1}(n+2)^{2n+3}},$$
%		and therefore
%		$$\frac{S(n+1)}{S(n)}=\frac{1}{27} \cdot \frac{n^{2n+3}}{(n-1)^{2n+1}} \cdot \frac{(2n+3)^n}{(2n+1)^{n-1}}\cdot\frac{(n+1)^{2n+1}}{(n+2)^{2n+3}}.$$
%		Rearranging the terms, we obtain
		$$\frac{S(n+1)}{S(n)}=\frac{n^2(2n+1)}{27(n+2)^2}\left(\frac{n(n+1)}{(n-1)(n+2)}\right)^{2n+1} \left(\frac{2n+3}{2n+1}\right)^n.$$
		Let
	 $A_n =\left(\frac{n(n+1)}{(n-1)(n+2)}\right)^{2n+1}$ and $ B_n =\left(\frac{2n+3}{2n+1}\right)^n.$ 
		Since
		 $\frac{n(n+1)}{(n-1)(n+2)}=1+\frac{2}{n^2+n-2},$ 
	the Bernoulli's inequality yields
		 $A_n>1+\frac{2(2n+1)}{n^2+n-2}.$ 
Similarly, using
		 $\frac{2n+3}{2n+1}=1+\frac{2}{2n+1}$ 
		and the Bernoulli's inequality, we have
		 $B_n>1+\frac{2n}{2n+1}.$ 
		Therefore,
		$$A_nB_n>\left(1+\frac{2(2n+1)}{n^2+n-2}\right)\left(1+\frac{2n}{2n+1}\right)= \frac{n(n+5)}{(n-1)(n+2)}   \frac{4n+1}{2n+1}.$$
 	 
		Hence,
		$$\frac{S(n+1)}{S(n)}>\frac{n^2(2n+1)}{27(n+2)^2}   \frac{n(n+5)}{(n-1)(n+2)}   \frac{4n+1}{2n+1}=\frac{n^3(n+5)(4n+1)}{27(n-1)(n+2)^3}.$$
	 
		Thus, it remains to show that
	 $n^3(n+5)(4n+1)>27(n-1)(n+2)^3,$ i.e.,  
	 $4n^5+21n^4+5n^3>27n^4+135n^3+162n^2-108n-216.$ 
		This is equivalent to $n^2(4n^3-6n^2-130n-162)+108n+216>0$.
%		$$
%		\begin{aligned}
%			&n^3(n+5)(4n+1)-27(n-1)(n+2)^3 \\
%			&=4n^5-6n^4-130n^3-162n^2+108n+216\\
%			&=n^2(4n^3-6n^2-130n-162)+108n+216.
%		\end{aligned}
%		$$
		It is straightforward to verify that the polynomial $4n^3-6n^2-130n-162$ is increasing for all $n \geq 7$ and is positive. Therefore, for $n\geq 7$, we have
		 $4n^5-6n^4-130n^3-162n^2+108n+216>0,$ 
		and hence
		$$S(n+1)>S(n) \quad \text{for all } n \geq 7.$$
		That  $S(n+1)>S(n) \quad \text{for } n=5,6$, and $S(2)>S(3)>S(4)>S(5)$ follow from their approximate values, as given in the table below. The graph of $S$ is given in Figure~\ref{Graphs-S-E_1-E_2}(a).
		
		\begin{table}[h]
			\centering
			\begin{tabular}{|c|c|c|c|c|c|}
				\hline
				$S(2)$ & $S(3)$ & $S(4)$ & $S(5)$ & $S(6)$ & $S(7)$ \\ \hline
				$2.82 \times 10^{-5}$ & $1.94 \times 10^{-5}$ & $1.38  \times 10^{-5}$  & $1.18  \times 10^{-5}$ & $1.21  \times 10^{-5}$ & $1.45   \times 10^{-5}$  \\ \hline
			\end{tabular}
		\end{table}
\end{proof}		
\begin{figure}[h!]
\begin{subfigure}{.5\textwidth}
\centering
\includegraphics[width=1\linewidth]{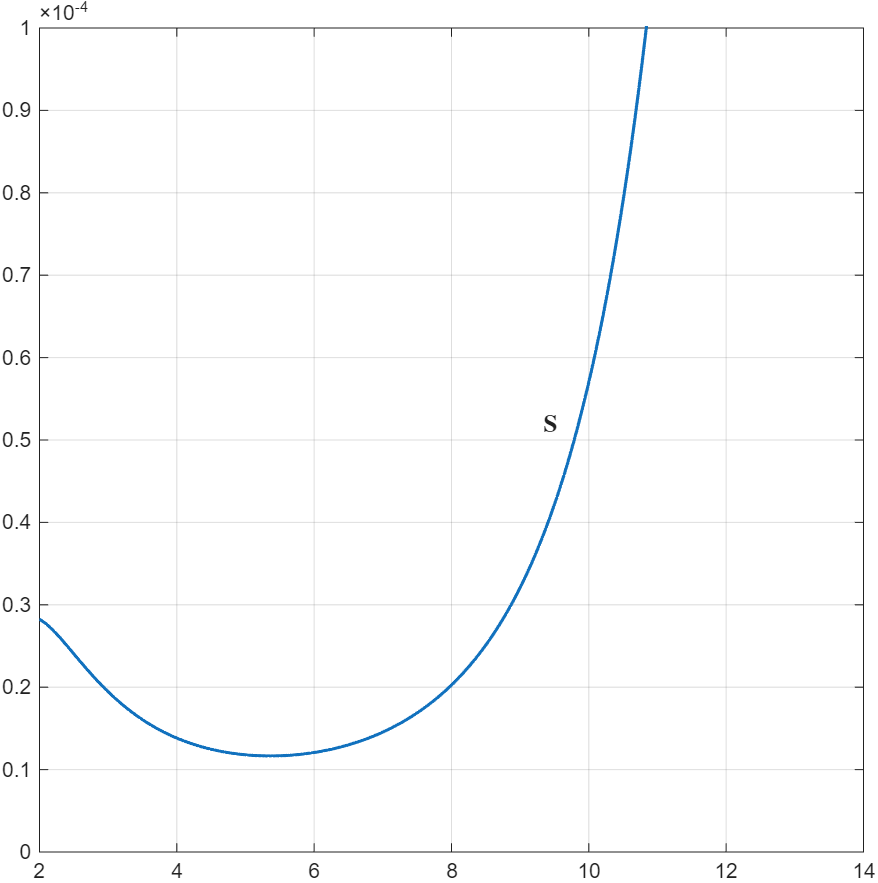}
\caption{The minimum value of $S$ is being attained at $n=5$.}
\end{subfigure}
\begin{subfigure}{.5\textwidth}
\centering
\includegraphics[width=1\linewidth]{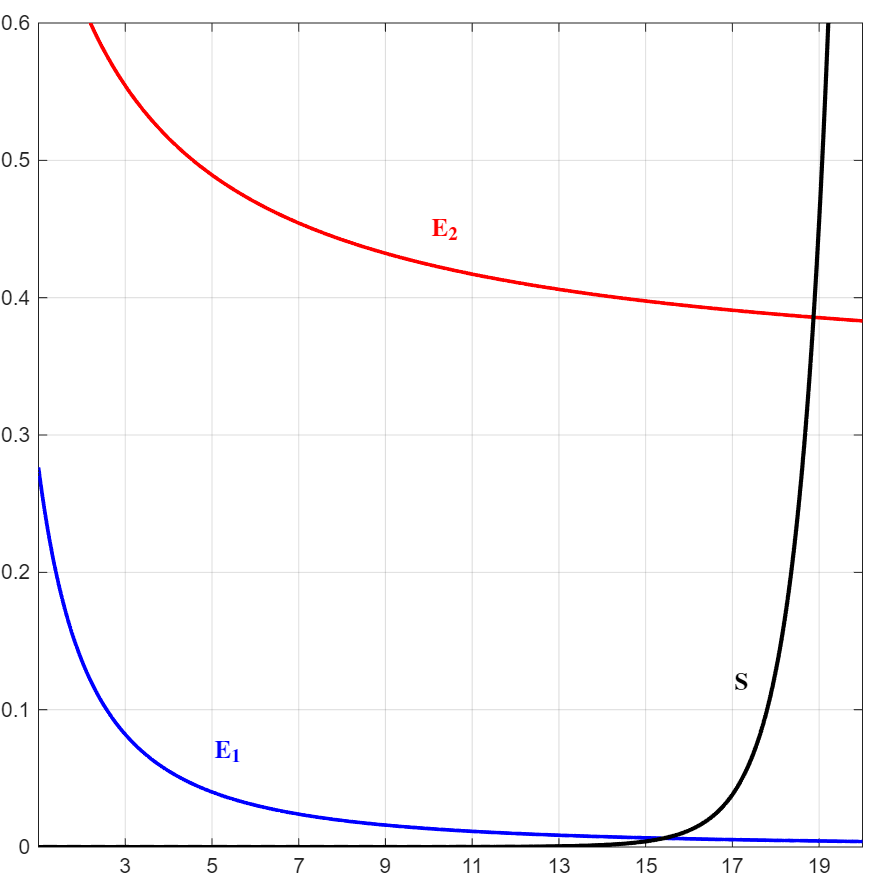}
\caption{ $S(n)<E_1(n)$ for $n\leq 15$ and $S(n)>E_2(n)$ for $n\geq 17$.}
\end{subfigure}
\caption{Graphs of $S, E_1, E_2: \mathbb{R} \to \mathbb{R}$.}
\label{Graphs-S-E_1-E_2}
\end{figure}
%%%%%%%%%%%%%%%%%%%%%%%%%%%%%%
\begin{lem}\label{E_decreasing}
The sequences  
$$E_1(n):=\frac{(n+1)(n+4)-n\sqrt{(n+1)(n+9)}}{2(n+1)(3n+2)}$$  
		and $$ E_2(n):=\frac{(n+1)(n+4)+n\sqrt{(n+1)(n+9)}}{2(n+1)(3n+2)} $$ 
	 are strictly decreasing  for $n \geq 1$.
	\end{lem}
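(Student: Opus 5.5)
Both sequences will be handled by treating $n$ as a real variable $x\ge 1$ and proving that $E_1$ and $E_2$ are strictly decreasing functions on $[1,\infty)$. It is convenient to simplify first. Dividing numerator and denominator by $n+1$ gives
$$E_{1,2}(x)=\frac{(x+4)\mp x\sqrt{\frac{x+9}{x+1}}}{2(3x+2)}.$$
Write $r(x)=\sqrt{\frac{x+9}{x+1}}$. This function is positive and decreasing, and $r(x)\to1$ as $x\to\infty$.

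For $E_2$ I will look at the two pieces separately. The first piece $\frac{x+4}{2(3x+2)}$ has derivative proportional to $(3x+2)-3(x+4)=-10<0$, so it is strictly decreasing. The second piece is $\frac{x\,r(x)}{2(3x+2)}$, which equals $\frac{1}{2}\cdot\frac{x}{3x+2}\cdot r(x)$. Here $\frac{x}{3x+2}$ increases while $r$ decreases, so I cannot conclude directly. Instead I will differentiate the log of $g(x)=\frac{x\sqrt{x+9}}{(3x+2)\sqrt{x+1}}$:
$$\frac{g'}{g}=\frac1x-\frac{3}{3x+2}+\frac{1}{2(x+9)}-\frac{1}{2(x+1)}=\frac{2}{x(3x+2)}-\frac{4}{(x+1)(x+9)}.$$
This sign check is the step I expect to need care. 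The expression is negative if and only if $(x+1)(x+9)<2x(3x+2)$, that is $5x^2-6x-9>0$. That holds for $x\ge 2$ but fails at $x=1$. So for $x\ge2$ both pieces decrease and $E_2$ is strictly decreasing. The remaining case, $E_2(1)>E_2(2)$, I will check numerically: $E_2(1)=(10+2\sqrt5)/20\approx0.7236$ and $E_2(2)=(9+\sqrt{11})/24\approx0.5132$.

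For $E_1$ it is cleaner to rationalize. The product of the two roots of $(x+1)(3x+2)y^2-(x+1)(x+4)y+\tfrac12=0$ is $E_1E_2=\frac{1}{2(x+1)(3x+2)}$. Equivalently, Vieta applied to the quadratic in $z^n$ defining the extraneous fixed points gives
$$E_1=\frac{1}{2(x+1)(3x+2)\,E_2}=\frac{1}{(x+1)\left[(x+4)+x\,r(x)\right]}.$$
So $E_1$ is strictly decreasing exactly when $h(x)=(x+1)(x+4)+x\sqrt{(x+1)(x+9)}$ is strictly increasing. This is immediate for $x>0$: the term $(x+1)(x+4)$ is increasing, and so is $x\sqrt{(x+1)(x+9)}$, being a product of positive increasing functions.

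Putting the two parts together gives strict decrease of both sequences for all $n\ge1$. The only delicate point is the non-monotone factor in $E_2$ near $n=1$, which is dispatched by the explicit comparison of $E_2(1)$ and $E_2(2)$.
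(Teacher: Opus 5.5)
Your treatment of $E_1$ matches the paper's. Both write $E_1$ as a positive constant over $h(x)=(x+1)(x+4)+x\sqrt{(x+1)(x+9)}$, which is visibly increasing. One slip there: the defining quadratic is $(n+1)(3n+2)y^2-(n+1)(n+4)y+2=0$, so $E_1E_2=\frac{2}{(n+1)(3n+2)}$ and $E_1=4/h$, not $1/h$. This does not affect monotonicity.

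The gap is in $E_2$. You correctly reduce $g'/g<0$ to $5x^2-6x-9>0$, but this fails at $x=2$, where $5\cdot 4-12-9=-1$. The positive root is $(3+3\sqrt6)/5\approx 2.07$, so $g$ is actually increasing on $[2,(3+3\sqrt6)/5)$. Your claim that both pieces decrease for $x\ge 2$ is therefore false, and nothing in your argument proves $E_2(3)<E_2(2)$. Your value of $E_2(2)$ is also wrong: $E_2(2)=\frac{18+2\sqrt{33}}{48}=\frac{9+\sqrt{33}}{24}\approx 0.614$. It looks as if the factor $n+1$ under the square root was dropped. The comparison with $E_2(1)\approx 0.724$ still holds. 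To repair your route, use the two-piece argument only for $x\ge 3$ and add the explicit checks $E_2(1)>E_2(2)>E_2(3)$, where $E_2(3)=\frac{7+3\sqrt3}{22}\approx 0.554$.

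The paper avoids all case work by rationalizing $E_2$ too. It writes $E_2=4/D_2$ with $D_2(x)=(x+1)(x+4)-x\sqrt{(x+1)(x+9)}$. Then $D_2'(x)>0$ reduces, after squaring two positive quantities, to $(2x+5)^2(x^2+10x+9)-(2x^2+15x+9)^2=16(10x+9)>0$, which holds for every $x\ge 1$. Splitting $E_2$ into two summands loses exactly this uniformity: the sum is monotone everywhere, but the summand $g$ is not monotone for small $x$.
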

	
	\begin{proof}
		We establish the strict monotonicity of both sequences by algebraic simplification via rationalization of their respective numerators and denominators.
		 
		Multiplying both the numerator and the denominator of $E_1(n)$ by the expression $ (n+1)(n+4) + n\sqrt{(n+1)(n+9)}$ and simplifying, we get 
%			$$E_1(n) = \frac{\left((n+1)(n+4)\right)^2 - \left(n\sqrt{(n+1)(n+9)}\right)^2}{2(n+1)(3n+2) \left((n+1)(n+4) + n\sqrt{(n+1)(n+9)}\right)}.$$
%			
%			By simplifying the numerator, we get
%			$$E_1(n) = \frac{8(3n+2)(n+1)}{2(n+1)(3n+2) \left((n+1)(n+4) + n\sqrt{(n+1)(n+9)}\right)},$$
%			which simplifies to
			 $E_1(n) = \frac{4}{(n+1)(n+4) + n\sqrt{n^2+10n+9}}.$ 
%			Let $D_1(n) = (n+1)(n+4) + n\sqrt{n^2+10n+9}$ be the denominator of $E_1(n)$. Since each individual component $(n+1)$, $(n+4)$, $n$, and $\sqrt{n^2+10n+9}$ is strictly positive and strictly increasing for all $n \geq 1$, their sum and product $D_1(n)$ must be strictly increasing. 
%			
%			Since $E_1(n) = \frac{4}{D_1(n)}$ has a constant positive numerator and a strictly increasing denominator,
This is clearly a strictly decreasing sequence for $n \geq 1$.
			
		 Now, by multiplying both the numerator and the denominator of $E_2(n)$ by $ (n+1)(n+4) - n\sqrt{(n+1)(n+9)}$, we get
			 $E_2(n) = \frac{4}{(n+1)(n+4) - n\sqrt{n^2+10n+9}}.$ 
			
			Letting $D_2(n) = (n+1)(n+4) - n\sqrt{n^2+10n+9}$, it is observed that
			$$D_2'(n) = 2n+5 - \frac{2n^2+15n+9}{\sqrt{n^2+10n+9}}.$$ 
			
			We have $D_2'(n)>0$ if and only if $(2n+5)\sqrt{n^2+10n+9}>2n^2+15n+9$. Since both sides are positive for $n>1$, squaring gives $(2n+5)^2(n^2+10n+9)>(2n^2+15n+9)^2$, which simplifies to $10n+9>0$. This condition holds for all $n$, giving that $D_2'(n)>0$ for all $n$. Thus, $D_2(n)$ is a strictly increasing sequence and therefore $E_2(n)$ is strictly decreasing. 	\end{proof}	
	Now we present the proof of Proposition~\ref{Odd_free_cr}.
	\begin{proof}[Proof of Proposition~\ref{Odd_free_cr}]
	\begin{enumerate}
		\item For $n=3$, $c_r^*=0.0268925$ and $e_1= 0.434436$. Thus, the result follows directly. 
		
		Now consider $n>3$. Since $c_r^*$, $e_1$, and $e_2$ are positive, it is sufficient to prove the inequalities by considering the $n$-th power of these numbers. Note that $S(n)=(c_r^*)^n$, $E_1(n)=e_1^n$, and $E_2(n)=e_2^n$, where $S(n),E_1(n)$, and $E_2(n)$ are as defined in Lemmas~\ref{fn_S} and~\ref{E_decreasing}, respectively. Thus, it is enough to prove that
		 $$S(n)<E_1(n)\quad \text{for all odd } n \text{ with } 5 \leq n\leq 15,~\mbox{and }
		  S(n)>E_2(n)\quad \text{for all odd } n\geq 19.$$ 
		  This is demonstrated in Figure~\ref{Graphs-S-E_1-E_2}(b).
		Note that
		$$S(15)=\frac{14^{31}  31^{14}}{27^{15}  16^{31}}\approx 0.0040818,~\mbox{and}~
		 E_1(15)=\frac{(16)(19)-15\sqrt{(16)(24)}}{2(16)(47)} \approx 0.0066896.$$
		Hence $S(15)<E_1(15).$
		From Lemmas~\ref{fn_S} and~\ref{E_decreasing}, we have $S(n)$ is increasing in $\{m \in \mathbb{N}:m \geq 5 \}$ and $E_1(n)$ is decreasing.
		Thus, for every odd $n$ with $5\leq n\le15$, we have
		$$S(n)\le S(15)<E_1(15)\le E_1(n).$$
		Consequently,
		$$S(n)<E_1(n) \qquad \text{for all odd } n\le15.$$
		Since   $S(19)=\frac{18^{39} 39^{18}}{27^{19} 20^{39}} \approx 0.455732,$ 
		and	$E_2(19)=\frac{(20)(23)+19\sqrt{(20)(28)}}{2(20)(59)} \approx 0.385433,$ we have
		$$S(19)>E_2(19).$$
		It follows from Lemmas~\ref{fn_S} and~\ref{E_decreasing} that $S(n)$ is increasing on $\{m \in \mathbb{N}:m \geq 19 \}$ and $E_2(n)$ is decreasing. 
		Therefore, for every odd $n\ge 19$,
		$$S(n)\ge S(19)>E_2(19)\ge E_2(n).$$
		Hence,
		$$S(n)>E_2(n) \qquad \text{for all odd } n\ge 19.$$
		\item 	For $n=17$, we have $(e_1 )^{17} \approx 0.005$ and  $(c_r ^*)^{17}\approx 0.038 $ (see Equations~\eqref{ext_f1} and~\eqref{realcriticalvalue}). In other words $c_r ^* > e_1$. Letting $s = (c_r ^*)^{17}$, a direct numerical evaluation yields $s \approx 0.0381563$. Substituting this value into the expression for $C_n (c_r ^*)$, we have
		$$\big(C_n(c_r ^*)\big)^{17} = \frac{17^{17} s^{18} (630s^2 - 36s - 16)^{17}}{2^{17} (18 s - 1)^{51}}.$$
		Evaluating the individual terms with $s \approx 0.0381563,$ we get 
		\begin{align*}
			18s - 1  \approx -0.313187 ~\mbox{and}~
			630 s^2 - 36s - 16  \approx -16.4564.
		\end{align*}
		Substituting these values directly into the above expression yields
		$$ \big(C_n(c_r^*)\big)^{17}  \approx \frac{17^{17} (0.0381563)^{18}  (16.4564)^{17}}{2^{17}  (0.313187)^{51}} \approx 4.57 \times 10^{36}>1.$$
		%    Since $1.1202 \times 10^{37} > 1$, we firmly establish that $\big(C_n(c^*)\big)^{17} > 1$.
	\end{enumerate}	
		In particular,  $C_n (c_r ^*) >1$.
	\end{proof}
	
	\subsection{Symmetry of $C_n$ and its dynamics on lines}    
This subsection describes various symmetries exhibited by the map $C_n$.
We begin with an immediate consequence of Lemma~\ref{Sym_C_p}.
\begin{lem}(Rotational Symmetry)
	For each $n>1$, 
	$\{z\mapsto \lambda z: \lambda^n =1\}\subseteq\Sigma C_n.$
	\label{rot-symm}\end{lem}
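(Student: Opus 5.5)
The plan is to derive the statement from Lemma~\ref{Sym_C_p}, once we check that $p_n$ is normalized and compute $\Sigma p_n$. First, $p_n(z)=z^{n+1}-z$ is monic. Since $n>1$, the coefficient of $z^{n}$ vanishes, so $p_n$ is normalized. Writing $p_n(z)=z\,P_0(z^n)$ with $P_0(w)=w-1$, we get $k=1$ and $m=n$.

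We then check that $m=n$ is maximal. If $p_n(z)=z^{k'}Q(z^{m'})$, the exponents occurring in $p_n$ are $1$ and $n+1$. Both must be congruent to $k'$ modulo $m'$, so $m'$ divides $n$. Hence $n$ is the largest admissible value. By the classification quoted from \cite[Theorem 9.5.4]{Beardon_book}, $\Sigma p_n=\{z\mapsto\lambda z:\lambda^n=1\}$, and Lemma~\ref{Sym_C_p} then gives $\Sigma p_n\subseteq\Sigma C_n$, which is the claim.

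As an independent check, or as a self-contained alternative, we can verify the conjugacy directly from Equation~\eqref{eq:Cp2}. For $\lambda^n=1$ we have $(\lambda z)^n=z^n$, so every factor in $z^n$ is unchanged and the prefactor becomes $(\lambda z)^{n+1}=\lambda z^{n+1}$. Therefore $C_n(\lambda z)=\lambda C_n(z)$. Consequently $\sigma(z)=\lambda z$ commutes with $C_n$, so $\sigma$ maps the Fatou set to itself and hence maps $\mathcal{J}(C_n)$ onto itself. Thus $\sigma\in\Sigma C_n$.

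There is no real obstacle here. The only point needing care is the maximality of $m$ in the normal form, and the divisibility argument above settles it.
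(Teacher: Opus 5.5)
Your proposal is correct and matches the paper, which obtains this lemma as an immediate consequence of Lemma~\ref{Sym_C_p} applied to the normalized polynomial $p_n(z)=z\,P_0(z^n)$ with $m=n$. Your direct check that $C_n(\lambda z)=\lambda C_n(z)$ for $\lambda^n=1$ is also valid and self-contained, and the paper later records the same computation in Lemma~\ref{Inv_lines_ref-sym}(3).
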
 
The map $C_n$ also possesses reflection symmetries, as described below.
\begin{lem}(Symmetry about  lines) \label{Inv_lines_ref-sym}
	For each $k \in \{ 0, 1, \dots, n-1\}$, let $\lambda_{k} = \exp\left(\frac{k\pi i}{n}\right)$ and  $L_{k} = \left\{ \lambda_k t : t \in \mathbb{R}\right\}$. Then the following statements hold.
	\begin{enumerate}
		\item The map $C_{n}$ satisfies $C_n (z)=\lambda_k ^{-2} \overline{(C_n (\lambda_k ^2 \overline{z}))}$, i.e., $C_n$ is symmetric with respect to the line $L_k$ for each $k$. In particular, each line $L_k$ is invariant under $C_{n}$. 
		\item If  $k$ is odd, then 
	$C_{n}(\lambda_{k} z) = \lambda_{k} Q_n (z)$ for all $z,$  where 
	\begin{equation}
		Q_n(z)= \frac{z^{n+1} \big(n(n+1)(2n+1)z^{2n} + 2n(n+1)z^{n} - n(n-1)\big)}{2\big((n+1)z^{n} + 1\big)^3}.
		\label{Q_conj} 
	\end{equation} 
		\item 	If $k$ is even, then 
		$C_{n}(\lambda_{k} z) = \lambda_{k} C_{n}(z)$ for all $z.$ 
	
	\end{enumerate}  
\end{lem}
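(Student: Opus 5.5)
The whole lemma reduces to two substitution identities read off from Equation~\eqref{eq:Cp2}. Write $C_n(z)=z^{n+1}\,\dfrac{P(z^n)}{2\,(Q(z^n))^3}$, where $P(w)=n(n+1)(2n+1)w^2-2n(n+1)w-n(n-1)$ and $Q(w)=(n+1)w-1$. Both $P$ and $Q$ have real coefficients. The structural facts I will use are: $C_n$ is $z^{n+1}$ times a rational function of $z^n$ with real coefficients, and $\lambda_k^{2n}=1$ while $\lambda_k^n=(-1)^k$.

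For part 2 and part 3, substitute $\lambda_k z$ directly. Then $(\lambda_k z)^n=(-1)^k z^n$ and $(\lambda_k z)^{n+1}=\lambda_k(-1)^k z^{n+1}$. If $k$ is even, nothing changes except the prefactor $\lambda_k$, so $C_n(\lambda_k z)=\lambda_k C_n(z)$. If $k$ is odd, replacing $w$ by $-w$ gives the following:
\begin{itemize}
\item $P(-w)=n(n+1)(2n+1)w^2+2n(n+1)w-n(n-1)$;
\item $Q(-w)^3=-\big((n+1)w+1\big)^3$, and this sign cancels the factor $(-1)^k=-1$ coming from $z^{n+1}$.
\end{itemize}
This yields exactly $\lambda_k Q_n(z)$ with $Q_n$ as in \eqref{Q_conj}. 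These are routine bookkeeping checks.

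For part 1, note first that, because the coefficients are real, $\overline{C_n(\overline{z})}=C_n(z)$. Next, $\lambda_k^2=\exp(2k\pi i/n)$ is an $n$-th root of unity, so by part 3 (applied with the even index $2k$, or directly since $(\lambda_k^2)^n=1$) we have $C_n(\lambda_k^2 w)=\lambda_k^2 C_n(w)$. Hence
\[
\overline{C_n(\lambda_k^2\overline{z})}=\overline{\lambda_k^2\,C_n(\overline{z})}=\lambda_k^{-2}\,C_n(z).
\]
This is off by a factor $\lambda_k^{2}$ from the stated identity, so the stated identity should be read as
\[
C_n(z)=\lambda_k^{2}\,\overline{C_n(\lambda_k^{-2}\overline{z})}\quad\text{or}\quad \overline{C_n(\lambda_k^{2}\overline{z})}\,\lambda_k^{2}=C_n(z),
\]
which are equivalent. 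Both say that $C_n$ commutes with the reflection $\rho_k(z)=\lambda_k^2\overline{z}$ in $L_k$. So the cleanest route is to prove $C_n\circ\rho_k=\rho_k\circ C_n$:
\[
C_n(\lambda_k^2\overline z)=\lambda_k^2C_n(\overline z)=\lambda_k^2\,\overline{C_n(z)}.
\]

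Invariance of $L_k$ then follows: $L_k$ is the fixed set of $\rho_k$, so $z\in L_k$ gives $\rho_k(C_n(z))=C_n(\rho_k z)=C_n(z)$, hence $C_n(z)\in L_k\cup\{\infty\}$. Alternatively, invariance follows from parts 2 and 3, since $Q_n$ and $C_n$ map $\mathbb R$ into $\mathbb R\cup\{\infty\}$.

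The only delicate point is keeping the powers of $\lambda_k$ and their conjugates straight in part 1, which I settle by phrasing it as commutation with $\rho_k$ as above.
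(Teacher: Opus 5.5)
Your proof is correct and is essentially the paper's. Parts 2 and 3 are the same direct substitution using $\lambda_k^n=(-1)^k$. For part 1, the paper simply asserts that the identity ``follows from'' Equation~\eqref{eq:Cp2}, whereas you make this explicit via the real coefficients together with equivariance under $z\mapsto\lambda_k^{2}z$.

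You are also right that the identity as printed is false. The correct relation is $C_n(z)=\lambda_k^{2}\,\overline{C_n(\lambda_k^{2}\overline{z})}$, i.e.\ $C_n\circ\rho_k=\rho_k\circ C_n$. This is all that the symmetry and invariance claims require, and the paper's own proof just repeats the misprinted formula.

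Two small corrections to your bookkeeping:
\begin{enumerate}
\item The printed version is off by a factor $\lambda_k^{4}$, not $\lambda_k^{2}$: its right-hand side equals $\lambda_k^{-4}C_n(z)$.
\item Your first ``equivalent'' form $C_n(z)=\lambda_k^{2}\,\overline{C_n(\lambda_k^{-2}\overline{z})}$ is also false. Its right-hand side equals $\lambda_k^{4}C_n(z)$; for instance, at the fixed point $z=1$ with $n=3$, $k=1$, it gives $e^{4\pi i/3}\neq 1$.
\end{enumerate}
Delete that first form. The commutation computation you actually carry out, $C_n(\lambda_k^2\overline{z})=\lambda_k^2\,\overline{C_n(z)}$, is unaffected, and so is your invariance argument.
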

\begin{proof}
	Let $\lambda_{k} = \exp\left(\frac{k\pi i}{n}\right)$ for $k = 0, 1, \dots, n-1$. Then
	$$
	\lambda_{k}^{n}= \left\{\begin{array}{ccc}
		1 & \text { if } k \text { is even}, \\
		-1 & \text { if } k \text { is odd}.
	\end{array}\right.
	$$
	\begin{enumerate}
		\item 	It follows from Equation~\eqref{eq:Cp2} that $C_n (z)=\lambda_k ^{-2} \overline{(C_n (\lambda_k ^2 \overline{z}))}$. Since the map $z \mapsto \lambda_k ^2 \overline{z}$ is the reflection about the line $L_k$, the map $C_n$ is symmetric with respect to $L_k$. In particular, the line $L_k$ is invariant under $C_n$. 
	
		\item If $k$ is odd then $\lambda_k ^n =-1$ and consequently, 	$$
		C_{n}(\lambda_{k}z) = \frac{\lambda_{k}z^{n+1} \big(n(n+1)(2n+1)z^{2n} + 2n(n+1)z^{n} - n(n-1)\big)}{2\big((n+1)z^{n} + 1\big)^3} = \lambda_{k} Q_n(z),
		$$
		where		
		$ Q_n(z) $ is as defined in the statement of this lemma.
			\item If $k$ is even then $\lambda_k ^n =1$ and from Equation~\eqref{eq:Cp2}, we have   $C_{n}(\lambda_{k} z) = \lambda_{k} C_{n}(z).$ 
	\end{enumerate}	 
\end{proof}

 The following remarks are immediate.

\begin{rem}
	For $k=0$, Lemma~\ref{Inv_lines_ref-sym}(1) gives that 
	$C_n (z)=\overline{C_n(\overline{z})}$, i.e., $C_n$ is symmetric about the real line.
	\label{symm-R}
\end{rem}
Here is an observation on the map $Q_n$, defined in Equation~\eqref{Q_conj}.
\begin{rem}
	\begin{enumerate}
	\item 
If $n$ is odd, then  $Q_n (z)=-C_n(-z)$  for all $z$.
If $n$ is even, then $Q_n$ is an odd function, i.e., $Q_n (z)=-Q_n(-z)$  for all $z$. 
\item For all $x>0$, the denominator of $Q_n (x)-x$ is positive, whereas its  numerator can be seen to be $ x((n+1) (-3n -2)x^{3n} +(n+1)(-4n -6)x^{2n} +(-n^2-5n -6)x^n)$, which is negative. Therefore $Q_n (x) < x$. 
\end{enumerate}
 \label{Qn-obs}
\end{rem}	
For each $n>1$, Lemma~\ref{Inv_lines_ref-sym} shows that the lines $L_k =\{t \exp(\frac{k \pi i}{n}): t \in \mathbb{R}\}$, for $k=0, 1, 2, \dots, n-1$, are invariant under $C_n$. We refer to them as \textit{invariant lines}. The following remark describes the behavior of $C_n$ on these  lines, which follows from Lemma~\ref{Inv_lines_ref-sym}(2), (3) and Remark~\ref{Qn-obs}. We say an invariant line $L_k$ is \textit{even-numbered} if $k$ is even. Otherwise, it is called \textit{odd-numbered}. In the following remark,  $\lambda_k$ and $L_k$ are as defined in the previous lemma.
\begin{rem}
	\begin{enumerate}
		\item If $n$ is odd, then there are $[\frac{n}{2}]+1$ even-numbered invariant lines and  $[\frac{n}{2}]$ odd-numbered invariant lines. Furthermore, the following statements are true.
		\begin{enumerate}
				\item 
				If $k$ is odd and $z \in L_k$, then $C_n (z)=\lambda_k Q_n(\lambda_k ^{-1}z)$. Moreover, it follows from Remark~\ref{Qn-obs}(1) that  
				$C_n (z)=-\lambda_k Q_n(-\lambda_k ^{-1}z)$.
				
				\item If $k$ is even and $z \in L_k$, then $C_n (z)= \lambda_k C_n(\lambda_k ^{-1}z)$. 				
				
		\end{enumerate}
		\item  If $n$ is even, then there are exactly $\frac{n}{2}$ even as well as odd-numbered invariant lines. Furthermore, the following statements are true.
		
		\begin{enumerate}
				\item 	If $k$ is odd and $z \in L_k$, then $C_n (z)= \lambda_{k} Q_{n}(\lambda_{k}^{-1} z) $.
				\item If $k$ is even and $z \in L_k$, then $C_n (z)=\lambda_k C_n(\lambda_k ^{-1}z)$.
				
		\end{enumerate}
	\end{enumerate}
	\label{dynamics-invariantlines}
\end{rem}	
	Recall that the critical lines of $C_n$ are the invariant lines passing through the origin that contain the free critical points. If $n$ is odd, then there are $n$ critical lines, each containing exactly one free critical point. If $n$ is even, then there are exactly $\frac{n}{2}$ critical lines, each containing exactly two free critical points.
	
	Note that for even $n$, the critical lines do not contain any root of unity, leading to the following useful observation.
	
	\begin{rem}\label{No_free_cr_for_even}
		Whenever $n$ is even, the forward orbits of the free critical points cannot converge to a non-zero real number. Consequently, the immediate basin of any non-zero root of $p_n$ under $C_n$  cannot contain a free critical point.
	\end{rem}
	 For a fixed $n$, it follows from Remark~\ref{dynamics-invariantlines} that $C_n :L_k \to L_k$ is conformally conjugate to  $Q_n : \mathbb{R} \to \mathbb{R}$ (or its conjugate under $z \mapsto -z$) or to $C_n: \mathbb{R} \to \mathbb{R}$  if $k$ is odd or even, respectively. In order to understand the dynamics of $C_n$ on the critical lines, it is enough to study $C_n$ or $Q_n$ as a function from $\mathbb{R} $ into itself. The following proposition establishes some properties of these maps that are required for this purpose.
\begin{prop}[Dynamics on critical lines]
For each $n$, $\lim\limits_{m \to \infty} C_n ^m (x)=1$ for every $x>e_2$ where $e_2$ is the bigger (real) extraneous fixed point of $C_n$ (see Lemma~\ref{extraneous-general}). 
 Further, for $n \leq 16$, the following hold.
\begin{enumerate}
	
	\item If $n$ is odd, then $\lim\limits_{m \to \infty} C_n ^m (x)= 0$ for all $x \in (-\infty, e_1)$.
	\item If $n$ is even, then $ \lim\limits_{m \to \infty} Q_n ^m (x)= 0$ for all 
	$x \in \mathbb{R}$.
\end{enumerate}
\label{dynamics-criticallines}
\end{prop}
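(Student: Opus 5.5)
Each part reduces to one-dimensional real dynamics, using the sign information in Lemmas~\ref{C_p-real-all} and~\ref{C_p-real-odd-even}, the monotonicity in Lemma~\ref{Cn-increasing-decreasing}, and Remark~\ref{Qn-obs}.

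\emph{First assertion ($x>e_2$).} I treat the intervals $(e_2,1]$ and $(1,\infty)$ separately.
\begin{itemize}
\item On $(e_2,1)$: $C_n$ is increasing on $(\xi,\infty)$, which contains $(e_2,1)$. Since $e_2$ and $1$ are fixed, $C_n$ maps $(e_2,1)$ into itself, and $C_n(x)>x$ there. The orbit is therefore increasing and bounded by $1$, so it converges to a fixed point in $(e_2,1]$, which must be $1$.
\item On $(1,\infty)$: $C_n(x)<x$, and monotonicity gives $C_n(x)>C_n(1)=1$. The orbit decreases to a fixed point in $[1,\infty)$, namely $1$.
\end{itemize}

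\emph{Part (1), $n$ odd, $n\le 15$.}
\begin{itemize}
\item On $[0,e_1)$: $C_n$ is increasing on $(0,\xi)$ and fixes $0$ and $e_1$, so $(0,e_1)$ is invariant. Since $C_n(x)<x$ there, orbits decrease to the only fixed point in $[0,e_1)$, which is $0$.
\item On $(-\infty,0)$: by Lemma~\ref{C_p-real-odd-even}, $C_n(x)>x$. I claim $C_n((-\infty,0))\subset(-\infty,e_1)$. On $(c_r,0)$, $C_n$ is decreasing, so $C_n$ is bounded above by $C_n(c_r)=c_r^*$. On $(-\infty,c_r)$, $C_n$ is increasing, so $C_n<c_r^*$ again. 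Proposition~\ref{Odd_free_cr}(1) gives $c_r^*<e_1$, so $C_n((-\infty,0))\subset(-\infty,c_r^*]\subset(-\infty,e_1)$.
\item Combining the two: an orbit starting at $x<0$ either stays negative forever or enters $[0,e_1)$ and then converges to $0$. If it stays negative, it increases and is bounded by $0$, so it converges to a non-positive fixed point. The only candidate is $0$, since $C_n$ has no negative real fixed point ($C_n(x)>x$ there, and $\infty$ is repelling).
\end{itemize}
The delicate point is that $C_n$ is not monotone on $(-\infty,0)$. Using the critical value bound $\sup_{x<0}C_n(x)=c_r^*$ is exactly what resolves this, and it is the only place where $n\le 16$ enters.

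\emph{Part (2), $n$ even, $n\le 16$.} Here $Q_n$ is odd, and $Q_n(x)<x$ for $x>0$ by Remark~\ref{Qn-obs}. For $x>0$, $Q_n(x)$ has the sign of $n(n+1)(2n+1)x^{2n}+2n(n+1)x^n-n(n-1)$, which vanishes at some $x_0>0$. Computing $Q_n'$ (the analogue of \eqref{Deri-C_n-reworded} with $z^n\mapsto -z^n$) shows that on $(0,\infty)$ the only critical point is $c>0$ with $c^n=\frac{n-1}{(n+1)(2n+1)}$. So $Q_n$ decreases on $(0,c)$ and increases on $(c,\infty)$, and its minimum value is $-|c^*|$, with $|c^*|<c$ by Lemma~\ref{c_and_c^*}. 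Since $Q_n$ is odd, $|Q_n|$ is even, so it suffices to work with $x\ge0$.

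The plan is to show $g(x):=|Q_n(x)|$ satisfies $g(x)<x$ for all $x>0$. Then the sequence $|Q_n^m(x)|$ decreases to a fixed point of $g$, which must be $0$. The two regimes are:
\begin{itemize}
\item Where $Q_n\ge 0$: this is $Q_n(x)<x$.
\item Where $Q_n<0$, i.e. $x\in(0,x_0)$: we need $-Q_n(x)<x$, i.e. $Q_n(x)+x>0$. Clearing the positive denominator, this becomes a polynomial inequality in $t=x^n$ on $(0,x_0^n)$.
\end{itemize}
Handling $Q_n(x)+x>0$ on all of $(0,x_0)$ is the main obstacle, since Lemma~\ref{c_and_c^*} only controls the minimum. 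I would try to show that the ratio $-Q_n(x)/x$, which depends only on $t$, has its maximum on $(0,x_0^n)$ bounded by $1$ for $n\le16$, by locating its critical point explicitly. Failing that, I would verify the inequality for each even $n\le16$ directly.
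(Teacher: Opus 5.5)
\textbf{Overall.} Your first assertion and Part (1) are correct and follow the paper's argument. In Part (1) the paper shows the orbit must enter $(x_r,0)$, where $x_r$ is the negative zero of $C_n$. You instead use $\sup_{x<0}C_n=c_r^*<e_1$ plus monotone convergence; the two are equivalent.

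\textbf{The gap is in Part (2).} The inequality $|Q_n(x)|<x$ on $(0,x_0)$ is left as a plan. Let $\tilde c$ be the positive critical point of $Q_n$. On $[\tilde c,x_0)$ the inequality does follow from the critical-value estimate, since there $|Q_n(x)|\le|Q_n(\tilde c)|<\tilde c\le x$. The real difficulty is $(0,\tilde c)$. There $|Q_n(x)|$ and $x$ both increase, and the bound at $\tilde c$ alone does not control the ratio $|Q_n(x)|/x$.

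The paper's proof does not settle this either. It passes from $Q_n(\tilde c)>-\tilde c$ straight to ``$Q_n^2(x)<x$ for $0<x<\tilde c$''. But $g=|Q_n|$ is increasing on $(0,\tilde c)$ and maps it into itself, so $Q_n^2=g\circ g$ there. Hence that claim is equivalent to $g(x)<x$, i.e.\ to the absence of a $2$-cycle $\{x^*,-x^*\}$ in $(0,\tilde c)$ as in Proposition~\ref{n-geq-17}. That is exactly your missing inequality.

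\textbf{Closing the gap.} Your proposed route works, and the computation shows the critical-value estimate is genuinely the wrong quantity. With $s=(n+1)x^n$,
\[
-\frac{Q_n(x)}{x}=\frac{n}{2(n+1)}\cdot\frac{s\bigl((n-1)-2s-\frac{2n+1}{n+1}s^2\bigr)}{(1+s)^3}.
\]
Its $s$-derivative has the sign of $(n-1)-2(n+1)s-\frac{4n+1}{n+1}s^2$. So the maximum over $s>0$ is at the unique positive root $s^*$ of $(4n+1)s^2+2(n+1)^2s-(n^2-1)=0$. It is not at $s=\frac{n-1}{2n+1}$, which corresponds to $\tilde c$ and gives the value $D_n=\frac{(n-1)^2(2n+1)}{27(n+1)^2}$.

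For $n=16$ one gets $s^*\approx 0.421$, versus $\frac{15}{33}\approx 0.455$. The maximum is about $0.954$: slightly above $D_{16}\approx 0.952$, but still below $1$. For smaller even $n$ the maxima are smaller (about $0.81$ at $n=14$ and $0.52$ at $n=10$). Checking the eight even cases $2\le n\le 16$, or proving monotonicity in $n$, completes the proof.

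Once $g(x)<x$ holds for all $x>0$, your argument via $|Q_n^m(x)|=g^m(|x|)$ is cleaner than the paper's three-interval split.
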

 \begin{proof} 	
 	The function $C_n:[e_2,\infty) \to [e_2,\infty)$ is strictly increasing for each $n$ by Lemmas~\ref{extraneous} and~\ref{Cn-increasing-decreasing}. Moreover, by Lemma~\ref{C_p-real-all}(1), we have $C_n(x)<x$ for all $x\in (1,\infty)$. Therefore, for every $x \in (1,\infty)$, the sequence of iterates $\{C_n^m(x)\}_{m \geq 0}$ is strictly decreasing and bounded below by $1$. Hence, by the Monotone Convergence Theorem, it is convergent. Since the limit of this sequence must be a fixed point of $C_n$ and $1$ is the only fixed point in the interval $[1,\infty)$, its limit must be $1$. Now consider the map $C_n:[e_2,1] \to [e_2,1]$. Again, by Lemmas~\ref{extraneous} and~\ref{Cn-increasing-decreasing}, $C_n$ is strictly increasing on this interval, and by Lemma~\ref{C_p-real-all}(2), $C_n (x) >x$ for all $ x \in (e_2, 1)$. Consequently, for every $x \in (e_2, 1)$, the sequence $\{C_n^m(x)\}_{m \geq 0}$ is strictly increasing and bounded above by $1$. It follows by a similar argument that  $\lim_{m \to \infty} C_n ^m (x)= 1$ for $x \in (e_2, 1)$. This proves the first part of this proposition.
 
    The remainder of the proof is divided into two cases depending on the parity of $n$.
%	Note that  $n\leq 16$ if and only if $|C_n(c)|<|c|$ for every free critical point $c$ of $C_n$. 
	\begin{enumerate}
		\item For odd $n$, recall that $e_1$ is the smaller positive extraneous fixed point of $C_n$ (see Lemma~\ref{extraneous-general}). 
		The function $C_n : (0, e_1) \to (0, e_1)$ is increasing (by Lemma~\ref{Cn-increasing-decreasing}) and satisfies $C_n (x) <x$ (by Lemma~\ref{C_p-real-all}(1)). This gives that $\lim_{m \to \infty} C_n ^m (x)=0$ for all $x \in (0,e_1)$ (see illustration for $C_5$ in Figure~\ref{Graph}(a)). 
		Since $n \leq 16 $ is odd, it follows from Proposition~\ref{Odd_free_cr} that the critical value $c_r^{*} $ corresponding to the real critical point $c_r$ is in $(0,e_{1})$. This implies that $C_{n}\big((x_r ,0)\big) \subset  (0, e_1)$, where $x_r$ is the unique negative root of $C_n$. This is because $C_n$ is increasing in $(-\infty, c_r)$ and decreasing in $(c_r, 0)$ (see Lemma~\ref{Cn-increasing-decreasing}), satisfies  $C_n (c_r)>0$ (by Equation~\eqref{critical-point-value}) and $\lim_{x \to -\infty}C_n (x) = -\infty$ (see Equation~\eqref{eq:Cp2}). 
		\par Now  $C_{n}(x) > x$ for all $x \in (-\infty, x_r)$ and $C_n$ is increasing in this interval. For  a point $x \in (-\infty, x_r)$, if $C_n ^m (x)< x_r$ for all $m$ then $\{C_n ^m(x)\}_{m >0}$ being an increasing sequence must converge and its limit should be a fixed point of $C_n$. Since there is no fixed point in $(-\infty, x_r)$, there exists an $m_0$  such that $C_n ^{m_0}(x) > x_r$. Taking $m_0$ to be the smallest such number, we get that $C_n ^{m_0}(x) \in (x_r, 0)$ and therefore $C_n ^{m_0+1}(x) \in (0, e_1)$. By the conclusion obtained in the first part of the previous paragraph, this implies that $\lim_{m \to \infty} C_n ^m (x)= 0$ for all $x \in (-\infty, e_1)$. 
			
\item  Let $n$ be even. Recall from  Equation~\eqref{Q_conj} that $$Q_n (x)= \frac{x^{n+1} \big(n(n+1)(2n+1)x^{2n} + 2n(n+1)x^{n} - n(n-1)\big)}{2\big((n+1)x^{n} + 1\big)^3}.$$ Then, 
\begin{equation}\label{Q-deri}
Q_n'(x) = \frac{n(n+1)x^{n}(x^{n}+1)^{2} \big((n+1)(2n+1)x^{n} - (n-1)\big)}{2\big((n+1)x^{n}+1\big)^4},
\end{equation}
and hence $Q_n$ has exactly two real critical points other than $0$, namely the real $n$-th roots of $\frac{n-1}{(n+1)(2n+1)}$. Denote the positive critical point of $Q_n$ by $\tilde{c}$. The function $Q_n$ decreases in $(0, \tilde{c})$ and increases thereafter (see Figure~\ref{Graph_Q}(a) for $n =16$). Since $n \leq 16$, or equivalently $|C_n (c)| < |c|$ for every free critical point (see Lemma~\ref{c_and_c^*}) and $|c|=\tilde{c}$, we have $|C_n(c)|< \tilde{c}$. The free critical point $c$ is on an odd-numbered invariant line (by Lemma~\ref{criticallines}(2)) and  it follows from Remark~\ref{dynamics-invariantlines}(2(a)) that $C_n (c)=\lambda_k Q_n (\lambda_k ^{-1}c)$. Therefore, $|C_n(c)|=|Q_n(\tilde{c})|$ as $c=\lambda_k \tilde{c}$ for some odd $k$. Since $Q_n(\tilde{c})<0$, $-Q_n(\tilde{c})<\tilde{c}$, i.e., $Q_n(\tilde{c})> -\tilde{c}$.  In fact, it follows that $Q_n ^2 (x)<x$ for all $0< x< \tilde{c}$, and consequently, $\lim_{k \to \infty} Q_n ^{2k}(x)=0$. It also follows that this is true for all the points of $(-\tilde{x}_r, \tilde{x}_r)$, where $\tilde{x}_r$ is the positive $n$-th root of $\frac{1}{2n+1}\left(-1+\frac{n\sqrt{2}}{\sqrt{n+1}} \right)$, the unique positive root of $Q_n$. Note that $Q_n (x)<x$ for all $x> \tilde{x}_r$  by Remark~\ref{Qn-obs}(2) and $Q_n$  is increasing in $(\tilde{x}_r, \infty)$. For every $x$ in this interval, $\{Q_n ^m (x)\}_{m>0}$ is a decreasing sequence. It follows by arguing as in the first part of this proof that, there exists an $n_0$ such that $Q_n ^{n_0} (x) \in (0, \tilde{x}_r)$ for all $x \in (\tilde{x}_r, \infty)$. Thus, $\lim_{m \to \infty} Q_n ^{2m}(x)=0$ for all $x >0. $ Since $Q_n$ is odd, this is also true for all $x \in \mathbb{R}$. 
		\end{enumerate}	
		%%%%%%%%%%%%%%%%%%%Graphs of Q_n%%%%%%%%%%%%%%%%
\begin{figure}[h!]
	\begin{subfigure}{.5\textwidth}
		\centering
		\includegraphics[width=1\linewidth]{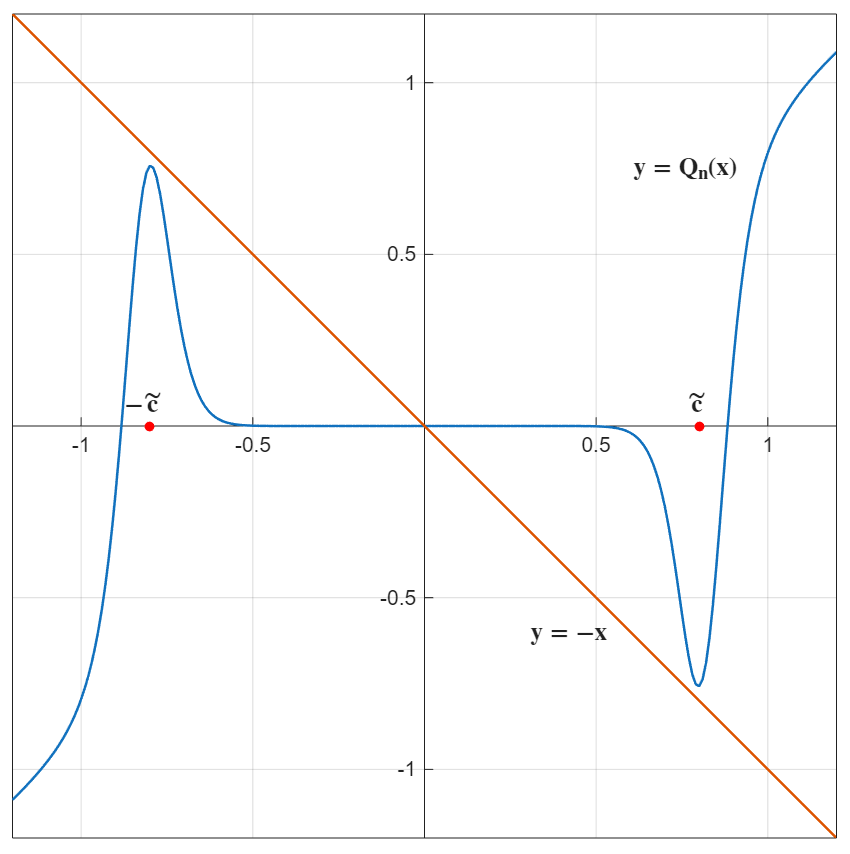}
		\caption{$n=16$}
	\end{subfigure}
	\begin{subfigure}{.5\textwidth}
		\centering
		\includegraphics[width=1\linewidth]{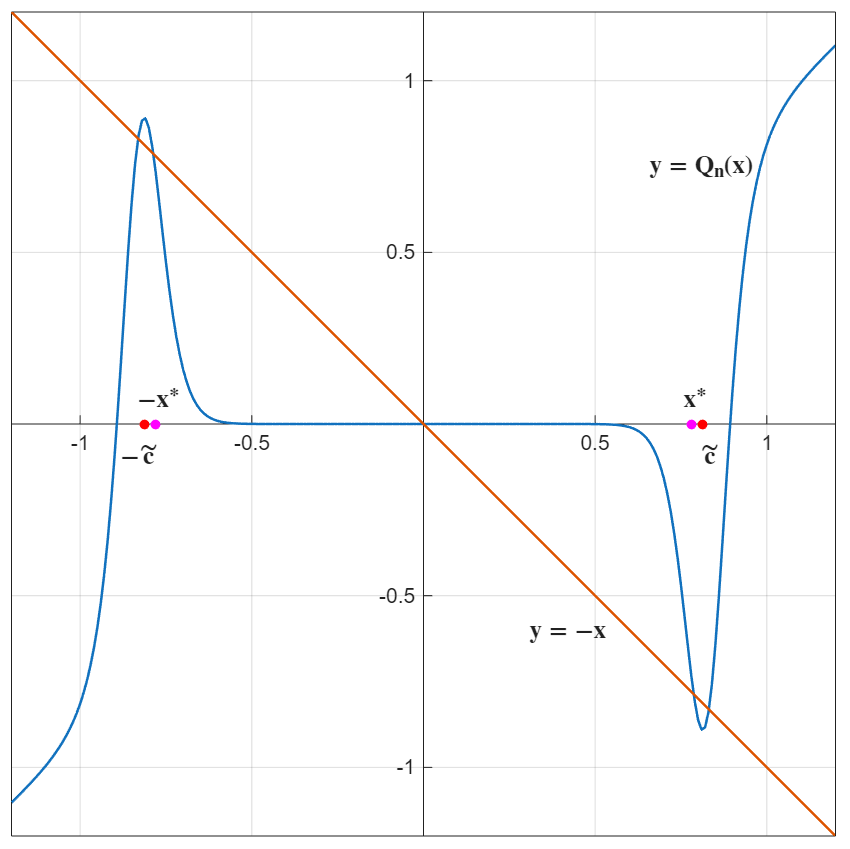}
		\caption{$n=18$}
	\end{subfigure}
	\caption{Graphs of $Q_n: \mathbb{R} \to \mathbb{R}$.}
	\label{Graph_Q}
\end{figure}	
\end{proof}

\begin{rem}\label{Crit_A_0}
   Since the set of all critical points and the immediate basin $\mathcal{A}_0$ are preserved by $n$-th order rotation about the origin, all free critical points are in $\mathcal{A}_0$ whenever $n \leq 16$.
\end{rem}

\section{Dynamics of $C_n$}\label{S4_Dynamics}
For further use, we need the following definitions. For a rational function $R$ of degree at least $2$, let $C_R$ denote the set of all critical points. The \emph{postcritical set} of $R$ is defined as $\mathcal{P}_R = \bigcup_{c \in C_R} \{R^k(c) : k \in \mathbb{N}\}$. The rational function $R$ is said to be a \emph{hyperbolic map} if $\mathcal{J}(R) \cap \overline{\mathcal{P}_R}= \emptyset$. If $\mathcal{J}(R) \cap \overline{\mathcal{P}_R}$ is a non-empty finite set, then we say $R$ is 
%\emph{sub-hyperbolic} or 
\emph{geometrically finite}.

A Fatou component is called a \emph{rotation domain} if it is either a Herman ring or a Siegel disk. The non-existence of such a domain can be proved using the invariance of critical lines (see Lemma~\ref{Inv_lines_ref-sym}).
\begin{prop}\label{no_rot-dom}
For each $n>1$,	the map $C_n$ does not have any rotation domain.
\end{prop}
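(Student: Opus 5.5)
Suppose, for contradiction, that $U$ is a rotation domain of $C_n$ of period $k$. The idea is to use the invariant lines from Lemma~\ref{Inv_lines_ref-sym} together with the reflection symmetries of $C_n$ about these lines.

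\textbf{Step 1 (a rotation domain meets the critical lines in a set of positive length).} By Lemma~\ref{cpoint}, $\partial U$ lies in the closure of the postcritical set. There are two kinds of critical points to consider.
\begin{enumerate}
\item The non-free critical points are the origin, the roots of unity and the poles. Their orbits are finite and end at superattracting fixed points or at $\infty$, and these points lie in Fatou components other than $U$ or at the repelling fixed point $\infty$.
\item The free critical points lie on the critical lines (Lemma~\ref{criticallines}). Since each $L_j$ is invariant, their full forward orbits stay on the critical lines.
\end{enumerate}
Hence $\overline{\mathcal P}\setminus\{\text{finitely many points}\}$ is contained in the union of the critical lines together with $\infty$. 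Because $\partial U$ is an uncountable continuum (a Jordan-type boundary, or the two boundary components of a ring), it cannot lie in finitely many points. So $\partial U$ meets some critical line $L_j$ in infinitely many points, and in fact in a set containing a nondegenerate arc-like piece. I would then argue that $U$ itself must intersect $L_j$: a Siegel disk or Herman ring whose boundary lies in a finite union of lines through the origin would have to be a sector-like region whose boundary consists of line segments. Such a boundary would be analytic, which is impossible for a Julia set boundary of a rotation domain unless $\mathcal J$ is contained in a line. That in turn is excluded, since the roots of unity have basins in different directions.

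\textbf{Step 2 (dynamics on the line inside $U$).} Take a component $I$ of $U\cap L_j$. The map $C_n^k$ preserves $L_j$, so $C_n^k$ maps $I$ into a component of $U\cap L_j$. On $U$, $C_n^k$ is conjugate to an irrational rotation, so every orbit in $U$ is recurrent and $C_n^k|_U$ is injective. Restricting to $L_j$, which is conjugate to the real dynamics of $C_n$ or $Q_n$ by Remark~\ref{dynamics-invariantlines}, we get a real-analytic map of a real interval (or finitely many intervals, permuted) into itself with no attracting behaviour.

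\textbf{Step 3 (contradiction).} A real map permuting finitely many intervals $I_1,\dots,I_m$ in $U\cap L_j$ has a power $C_n^{km}$ that maps $I_1$ homeomorphically onto itself. The orbits of a continuous monotone self-map of an interval converge monotonically to fixed points, or are $2$-periodic. So $C_n^{2km}|_{I_1}$ has orbits converging to fixed points. But $C_n^{k}|_U$ is conjugate to an irrational rotation, whose only possible fixed point is the Siegel centre, and in the Herman ring case there is none. Moreover, no point other than the centre is recurrent to a fixed point in this way. Thus every point of $I_1$ would have to be fixed, so $C_n^{2km}$ is the identity on an arc and hence on $\widehat{\mathbb C}$. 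This contradicts $\deg C_n = 3n+1 \geq 2$. (Equivalently, the rotation number would be rational.)

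\textbf{Main obstacle.} The main obstacle is Step 1: showing that the rotation domain actually meets a critical line, rather than only having its boundary accumulate on one. For this I would first try using the reflection symmetry $z\mapsto \lambda_j^2\bar z$. The reflected domain $U'$ is also a rotation domain, and $\partial U \cap L_j$ being infinite forces $U$ and $U'$ to share boundary along $L_j$. The next step is to show that $U = U'$, after which connectivity of $U$ together with invariance under the reflection forces $U$ to cross $L_j$. For the Siegel disk case one also needs that the centre is a fixed point of some $C_n^k$. I would use symmetry to place this centre on a line as well.
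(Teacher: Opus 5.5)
\textbf{There is a genuine gap in your Step 1, and Steps 2--3 depend on it.} You correctly reduce to $\partial U \subseteq \Lambda \cup \{\infty\}$, where $\Lambda$ is the union of the critical lines. The other postcritical points, $0$ and the roots of unity, lie in the Fatou set. But you never prove that $U$ meets a critical line.

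\begin{itemize}
\item The analyticity claim, that a rotation domain cannot be bounded by line segments ``unless $\mathcal{J}$ is contained in a line'', is asserted without proof. It is not a fact you can quote in this form. The sector you would get has corners at $0$ and $\infty$, and the exception clause has no content here.
\item The reflection route in your ``Main obstacle'' paragraph also fails as stated. $U$ and $U'$ sharing a boundary arc on $L_j$ does not give $U=U'$, because distinct Fatou components routinely share boundary. If you did have $U'=U$, connectedness would force $U\cap L_j\neq\emptyset$, and Steps 2--3 would essentially go through. In that case you should use recurrence to get $C_n^{km}(I)\subseteq I$ for some $m$, rather than assuming $U\cap L_j$ has finitely many components.
\end{itemize}

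\textbf{The missing idea is elementary and makes Steps 2--3 unnecessary.} It is what underlies the paper's one-line assertion that $\partial D$ must contain the origin.
\begin{itemize}
\item Since $\Lambda$ has empty interior, the open set $U$ meets some component $S$ of $\widehat{\mathbb{C}}\setminus(\Lambda\cup\{\infty\})$. This $S$ is an open sector with vertex at $0$.
\item $S$ is connected and disjoint from $\partial U$. So $S=(S\cap U)\cup(S\setminus\overline{U})$ splits $S$ into two disjoint open sets with the first nonempty, which forces $S\subseteq U$.
\item Hence $0\in\overline{S}\subseteq\overline{U}$. Since $0\in\mathcal{A}_0\neq U$, we have $0\notin U$, so $0\in\partial U\subseteq\mathcal{J}(C_n)$. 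This contradicts $0$ being a superattracting fixed point.
\end{itemize}
The argument works equally for Siegel disks and Herman rings. You had already reached the conclusion that $U$ must be ``sector-like''. The contradiction comes simply from the vertex of that sector being the origin, not from any regularity of the boundary.
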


\begin{proof}
If there exists a rotation domain $D$ for $C_n$, then its boundary is contained in the closure of the forward orbits of the critical points by Lemma~\ref{cpoint}. Since the critical lines  are invariant under $C_{n}$ by Lemma~\ref{Inv_lines_ref-sym},  the boundary of $D$ must be contained in the union of the critical lines. Consequently, the boundary of $D$ must contain the origin. However, $0$ is a superattracting fixed point of $C_n$ and hence belongs to the Fatou set, leading to a contradiction. This completes the proof.
\end{proof}
	
\subsection{Proofs of Theorems~\ref{Imm_unbd} and~\ref{Connected_J.set}}

We denote the immediate basins of $0$ and $1$ by $\mathcal{A}_0$ and $\mathcal{A}_1$, respectively. The other immediate basins, corresponding to the roots of $z^n=1$, are denoted by $\mathcal{A}_k$, for $k=1,2,\dots, n$, respectively, where $\mathcal{A}_{k}=\sigma^{k-1}(\mathcal{A}_1)$ and $\sigma(z)=e^{\frac{2\pi i}{n}}z $.
The following lemma will be useful in the proof of Theorem~\ref{Imm_unbd}.
	
	\begin{lem}\label{Non-A_1}
	The immediate basin	$\mathcal{A}_1$ cannot contain any negative real number.
	\end{lem}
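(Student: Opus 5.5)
The plan is to exploit the reflection symmetry of $C_n$ about the real line (Remark~\ref{symm-R}) together with the dynamics of $C_n$ on $\mathbb{R}$ from Lemmas~\ref{C_p-real-all} and~\ref{C_p-real-odd-even}. Suppose, for contradiction, that some $x_0<0$ lies in $\mathcal{A}_1$. Since $\mathcal{A}_1$ is a Fatou component containing $1$ and symmetric about $\mathbb{R}$ (the conjugate of $\mathcal{A}_1$ is a Fatou component containing $\overline{1}=1$, hence equals $\mathcal{A}_1$), the real point $x_0$ belongs to $\mathcal{A}_1$. Moreover $\mathcal{A}_1\cap\mathbb{R}$ is an invariant subset of $\mathbb{R}$ on which iterates converge to $1$. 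In particular, the forward orbit of $x_0$ under the real map $C_n$ must tend to $1$.

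The first step is to show that this is impossible by tracking real orbits, which splits by parity. For $n$ even, $C_n$ is odd, so $C_n$ maps $(-\infty,0)\cup(0,\infty)$ with $C_n(-x)=-C_n(x)$. The negative half-line minus the pole $-\xi$ is therefore invariant, except for points landing on $-\xi$ and then escaping through $\infty$. Any orbit starting negative either stays negative or passes through $\infty$, which is a repelling fixed point in $\mathcal{J}$. Such an orbit can never converge to $1>0$, since convergence to $1$ requires the iterates to be eventually positive and close to $1$. For $n$ odd, Lemma~\ref{C_p-real-odd-even}(1) gives $C_n(x)>x$ on $(-\infty,0)$, and the root $x_r$ and the critical value $c_r^*$ control where negative points go. Here real orbits can cross into $(0,\infty)$, so parity alone does not exclude them.

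A cleaner argument, valid for both parities, is a connectivity one. Because $\mathcal{A}_1$ is connected, open and symmetric about $\mathbb{R}$, the set $\mathcal{A}_1\cap\mathbb{R}$ is connected. To see this, take two real points of $\mathcal{A}_1$ joined by a path $\gamma$ in $\mathcal{A}_1$; then $\gamma$ together with $\overline{\gamma}$ encloses the real segment between them, and simple connectivity-type arguments (or the standard fact that a symmetric domain meets the axis of symmetry in an interval when every bounded complementary piece is also symmetric) place that segment in the closure. This is delicate, so I would instead use it directly: a real interval containing both $x_0<0$ and $1$ would contain the repelling extraneous fixed points $e_1,e_2$ of Lemma~\ref{extraneous}, which lie in $\mathcal{J}$. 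That is a contradiction.

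The main obstacle is justifying that $\mathcal{A}_1\cap\mathbb{R}$ is an interval without yet knowing that $\mathcal{A}_1$ is simply connected; this is precisely what Theorem~\ref{Imm_unbd} is meant to establish, so I must avoid circularity. I would handle this as follows. If the real trace of $\mathcal{A}_1$ is disconnected, a symmetric path $\gamma\cup\overline{\gamma}$ in $\mathcal{A}_1$ from $x_0$ to $1$ bounds a region $W$ containing $e_2\in\mathcal{J}$. By the maximum principle applied to the iterates, $C_n^m$ stays bounded on $\partial W$, which tends to $1$ uniformly. So $C_n^m(W)$ cannot meet the pole, hence contains no point of $\mathcal{J}$ (whose backward orbit contains poles densely near $e_2$). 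This places $W$ in $\mathcal{F}$, contradicting $e_2\in W$.
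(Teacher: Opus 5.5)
Your proposal has a genuine gap: neither of your two routes works, and the ingredient that makes the paper's proof go through is missing.

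\textbf{The parity argument.} For even $n$, oddness of $C_n$ means $C_n(-x)=-C_n(x)$; it does not mean $C_n$ preserves the sign of real numbers. Since $C_n(x)\to-\infty$ as $x\to\xi^+$ (Lemma~\ref{C_p-real-all}(3)), oddness gives $C_n(x)\to+\infty$ as $x\to(-\xi)^-$. So points of $(-e_2,-\xi)$ close to $-\xi$ are mapped into $(e_2,\infty)$, which lies in $\mathcal{A}_1$ by Proposition~\ref{dynamics-criticallines}. For example, $C_2(-0.65)\approx 12.3$. Hence there are negative reals whose orbits converge to $1$. They lie in the basin of $1$, just not (as the lemma asserts) in the immediate basin. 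No argument based only on real orbits can prove the statement, because it is about which component such points lie in.

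\textbf{The maximum-principle argument.} Because $\gamma$ runs from $1$ to $x_0<0$, the loop $\gamma\cup\overline{\gamma}$ has odd winding number about every point of $(x_0,1)$ not on it. So $W$ contains the origin and the pole $\xi\in(0,1)$. Thus $C_n$ is not holomorphic on $W$, and your assertion that $C_n^m(W)$ avoids the pole is false already for $m=1$. The conclusion ``$C_n^m\to 1$ on $W$'' is also directly contradicted by $0\in W\cap\mathcal{A}_0$.

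More fundamentally, for rational maps a loop in an immediate basin can surround Julia points, since multiply connected immediate basins do occur. So knowing only that $\Gamma\subset\mathcal{A}_1$ encloses $e_2$ yields no contradiction. Your unproved claim that $\mathcal{A}_1\cap\mathbb{R}$ is an interval is at least as strong as the lemma itself.

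\textbf{What the paper does instead.} It uses the rotational symmetry. With $\sigma(z)=e^{2\pi i/n}z$, the loop $\sigma(\Gamma)$ lies in $\mathcal{A}_2=\sigma(\mathcal{A}_1)$ and also surrounds $0$. Two loops that both surround $0$ and differ by a rotation cannot be disjoint. Otherwise one would lie in a bounded complementary component of the other, and the set it fills in would have strictly smaller area, although $\sigma$ preserves area. Hence $\mathcal{A}_1\cap\mathcal{A}_2\neq\emptyset$, a contradiction. Your construction of $\Gamma=\gamma\cup\overline{\gamma}$ is exactly the right first step; you should finish with this symmetry argument rather than the maximum principle.
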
	
	
	\begin{proof}
		Suppose, on the contrary, that $\mathcal{A}_1$ contains a negative real number, say $a^{-}$. Since $\mathcal{A}_1$ is connected, there exists a simple arc $\gamma$ in $\mathbb{C} \cap \mathcal{A}_1$ joining $1$ and $a^{-}$. Because the origin belongs to  $\mathcal{A}_0$ and $\mathcal{A}_1 \cap \mathcal{A}_0 = \emptyset$, the arc $\gamma$ cannot lie entirely on the real line. Moreover, since $\mathcal{A}_1$ is symmetric with respect to the real line (a consequence of Remark~\ref{symm-R}), the reflected arc $\bar{\gamma} := \{\overline{z}: z\in \gamma\}$ is also contained in $\mathcal{A}_1$. It follows that $\Gamma = \gamma \cup \bar{\gamma}$ forms a closed curve enclosing the origin. Since $\mathcal{A}_2 = \sigma(\mathcal{A}_1)$, where $\sigma(z) = e^{\frac{2\pi  i}{n}}z$, the rotated curve $\sigma(\Gamma)$ lies entirely in $\mathcal{A}_2$ and also encloses the origin. Therefore $\Gamma \cap \sigma(\Gamma) \neq \emptyset$, which implies that $\mathcal{A}_1 \cap \mathcal{A}_2 \neq \emptyset$ (see Figure~\ref{loops_Intersect}). This is a  contradiction,  thereby completing the proof.
	%%%%%%%%%%%%%%%%%%%%%%%%%%%%%%%
	\begin{figure}[htbp]
		\centering
		\begin{tikzpicture}[scale=1.8, >=stealth]
			
			% Grid/Axis lines for reference (faint grey)
			\draw[->, help lines, color=gray!40, thin] (-2.5,0) -- (2.5,0) node[right, black] {$\mathbb{R}$};
			\draw[->, help lines, color=gray!40, thin] (0,-2) -- (0,2);
			
			% Origin point
			\filldraw[black] (0,0) circle (1.5pt) node[below left] {$0$};
			
			% --- SECTION 1: A_1 and \Gamma ---
			% Domain A_1 boundary (Light blue, symmetric background hint)
			\draw[lightgray, dashed, fill=blue!5, opacity=0.3] plot[smooth cycle] coordinates {
				(1.5,0) (1,0.8) (-0.5,1.2) (-1.8,0) (-0.5,-1.2) (1,-0.8)
			};
			\node[blue!70!black] at (1.6, 0.5) {$\mathcal{A}_1$};
			
			% Point 1 and a^-
			\filldraw[blue] (1.2, 0) circle (1.5pt) node[below right] {$1$};
			\filldraw[blue] (-1.5, 0) circle (1.5pt) node[below left] {$a^-$};
			
			% Upper arc \gamma (passing above the origin)
			\draw[thick, blue] plot[smooth, tension=0.8] coordinates {
				(1.2,0) (0.5,0.7) (-0.5,0.8) (-1.5,0)
			};
			\node[blue, above] at (-0.45, 0.84) {$\gamma$};
			
			% Lower arc \bar{\gamma} (symmetric reflection)
			\draw[thick, blue, dashed] plot[smooth, tension=0.8] coordinates {
				(1.2,0) (0.5,-0.7) (-0.5,-0.8) (-1.5,0)
			};
			\node[blue, below] at (-0.35, -0.55) {$\bar{\gamma}$};
			
			% Label for \Gamma
			\node[blue, font=\large] at (-1.5, 0.45) {$\Gamma$};

			% --- SECTION 2: A_2 and \sigma(\Gamma) ---
			% Rotation angle line for visual aid (e^{i\pi/n} where n is roughly 3 or 4 here)
			% Let's rotate by 50 degrees
			\begin{scope}[rotate=50]
				% Domain A_2 boundary hint
				\draw[lightgray, dashed, fill=orange!5, opacity=0.3] plot[smooth cycle] coordinates {
					(1.5,0) (1,0.8) (-0.5,1.2) (-1.8,0) (-0.5,-1.2) (1,-0.8)
				};
				\node[orange!70!black] at (1.6, 0.3) {$\mathcal{A}_2$};
				
				% Rotated upper arc
				\draw[thick, orange] plot[smooth, tension=0.8] coordinates {
					(1.2,0) (0.5,0.7) (-0.5,0.8) (-1.5,0)
				};
				
				% Rotated lower arc
				\draw[thick, orange, dashed] plot[smooth, tension=0.8] coordinates {
					(1.2,0) (0.5,-0.7) (-0.5,-0.8) (-1.5,0)
				};
				
				% NEW: Orange dots at the endpoints inside the rotated scope
				\filldraw[orange] (1.2, 0) circle (1.5pt) node[above right] {$\sigma(1)$};
				\filldraw[orange] (-1.5, 0) circle (1.5pt) node[left] {$\sigma(a^-)$};
				
				% Label for \sigma(\Gamma)
				\node[orange, font=\large] at (-0.5, 0.45) {$\sigma(\Gamma)$};
			\end{scope}
			
			% % --- INTERSECTION HIGHLIGHT ---
			% Mark one of the clean intersection points where the loops cross to emphasize \Gamma \cap %\sigma(\Gamma) \neq \emptyset
			% \filldraw[red] (-0.32, 0.68) circle (1.8pt);
			% \filldraw[red] (0.32, -0.68) circle (1.8pt);
			% \draw[red, <-] (-0.28, 0.72) .. controls (-0.1, 1.2) and (0.3, 1.2) .. (0.6, 1.3)
			% node[right, black, font=\small] {Intersection $\Gamma \cap \sigma(\Gamma) \neq \emptyset$};
			
		\end{tikzpicture}
		\caption{Illustration of the contradiction arising from the intersection of $\Gamma$ and $\sigma(\Gamma)$.}
		\label{loops_Intersect}
	\end{figure}
\end{proof}	
	
 We now establish the simple connectivity and unboundedness of the immediate basins corresponding to the non-zero roots of $p_n$ by proving Theorem~\ref{Imm_unbd}. 
	
	\begin{proof}[Proof of Theorem~\ref{Imm_unbd}]
		Since each immediate basin $\mathcal{A}_i$, $i=1,2,\dots, n$, is the image of $\mathcal{A}_1$ under the map $z \mapsto \lambda z$ where ${\lambda}^n=1$ (see Lemma~\ref{rot-symm}), it is sufficient to prove this theorem for $\mathcal{A}_1$.
		\par It follows from Proposition~\ref{dynamics-criticallines} that $(e_2, \infty) \subset \mathcal{A}_1$. In particular, the immediate basin $\mathcal{A}_1$ is unbounded. 
		
		\par In order to show that $\mathcal{A}_1$ is simply connected, we first prove that $\mathcal{A}_1$ does not contain any free critical point. If $n$ is even, this is immediate from Remark~\ref{No_free_cr_for_even}. Suppose now that $n$ is odd. Then $C_n$ has a real free critical point and that is negative. It follows from Lemma~\ref{Non-A_1} that the free critical point is not in $\mathcal{A}_1$. Hence, $\mathcal{A}_1$ contains exactly one (distinct) critical point. \cite[Theorem~9.3]{Milnor_book} gives that the immediate basin of a superattracting fixed point is simply connected whenever it does not contain any critical point other than the superattracting fixed point itself. Therefore, $\mathcal{A}_1$ is simply connected.
	\end{proof}
 We now prove the connectedness of the Julia set.
	\begin{proof}[Proof of Theorem~\ref{Connected_J.set}]
		It follows from the proof of Theorem~\ref{Imm_unbd} that for each $i=1,2,\dots, n$, the immediate basin $\mathcal{A}_i$ is unbounded. Therefore, its boundary $\partial \mathcal{A}_i$ contains a pole by the first part of Lemma~\ref{Jconnected}, as $\infty$ is a repelling fixed point of $C_n$. Moreover, each $\mathcal{A}_i$ is simply connected. Thus, the unbounded Julia component contains all the poles. This is a consequence of the fact that the set of all poles as well as the set of all $\mathcal{A}_i$s are preserved by rotations of order $n$ about the origin (see Lemma~\ref{Sym_C_p}).  It then follows by the second part of Lemma~\ref{Jconnected} that the Julia set of $C_n$ is connected.
	\end{proof}
%%%--------------------
%%%-------------------
\subsection{Proofs of Theorems~\ref{Bdd_imm} and~\ref{Convergent}}
 Let $U$ be a Fatou component, and $z_0\in U$. A boundary point $z^*$ is called accessible from $z_0$ if there is a simple curve $\gamma:[0,1)\to U$ such that $\gamma(0)=z_0$ and $\gamma(1)=z^*$. The homotopy classes of all such curves are called accesses to $z^*$.

We next study the accessibility of the extraneous fixed points from the immediate basin $\mathcal{A}_0$. The following proposition shows that these points lie on $\partial \mathcal{A}_0$ and are accessible through a unique access.
\begin{prop}\label{e_1onBdryA_0}
	The extraneous fixed points, which are solutions of $z^{n}=e_{1}^{n}$, lie on $\partial \mathcal{A}_{0}$. Moreover, there is exactly one access to these fixed points from $\mathcal{A}_0$.
\end{prop}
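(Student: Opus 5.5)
By the rotational symmetry of Lemma~\ref{rot-symm}, it suffices to treat the real extraneous fixed point $e_1$. The other solutions of $z^n=e_1^n$ are $\lambda e_1$ with $\lambda^n=1$. The rotation $z\mapsto \lambda z$ commutes with $C_n$ and preserves $\mathcal{A}_0$, so it carries $\partial\mathcal{A}_0$ to itself and carries accesses to accesses.

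\textbf{Step 1: $e_1\in\partial\mathcal{A}_0$.} First I show $(0,e_1)\subset\mathcal{A}_0$. By Lemma~\ref{Cn-increasing-decreasing}, $C_n$ is increasing on $(0,\xi)$, and $e_1<\xi$ by Lemma~\ref{extraneous}. Hence $C_n$ maps $(0,e_1)$ into itself. By Lemma~\ref{C_p-real-all}(1), $C_n(x)<x$ on this interval, so every orbit in $(0,e_1)$ decreases monotonically. Its limit is a fixed point in $[0,e_1)$, and the only one is $0$. This argument uses no restriction on $n$. Thus $(0,e_1)$ is a connected subset of the basin of $0$ whose closure contains $0$, so it lies in $\mathcal{A}_0$. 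On the other hand, $e_1$ is repelling, so $e_1\in\mathcal{J}(C_n)$. Therefore $e_1\in\overline{\mathcal{A}_0}\setminus\mathcal{A}_0=\partial\mathcal{A}_0$. The segment $[0,e_1)$ is an explicit curve in $\mathcal{A}_0$ landing at $e_1$, so at least one access exists.

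\textbf{Step 2: at most one access.} Suppose there were two non-homotopic accesses from $\mathcal{A}_0$ to $e_1$, represented by curves $\gamma_1,\gamma_2$ starting at $0$. Then $\gamma_1\cup\gamma_2\cup\{e_1\}$ contains a Jordan curve $\Gamma\subset\mathcal{A}_0\cup\{e_1\}$. Non-homotopy relative to $\mathcal{A}_0$ means both complementary components of $\Gamma$ meet $\mathcal{J}(C_n)$, and in particular the bounded one does.

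I would exploit reflection symmetry to force a contradiction. By Remark~\ref{symm-R}, $\mathcal{A}_0$ is invariant under $z\mapsto\bar z$, so the accesses come in conjugate pairs. The real access along $[0,e_1)$ is self-conjugate, so if there are two accesses there is also a non-real one. Take $\gamma$ in the upper half-plane landing at $e_1$ and form $\Gamma=\gamma\cup\bar\gamma$. This is a closed curve in $\mathcal{A}_0\cup\{e_1\}$ meeting the real axis only at $0$ and $e_1$, and the interval $(0,e_1)$ lies inside it. A Julia point separated by $\Gamma$ must lie in the region bounded by $\Gamma$. Since the interior of $\Gamma$ is bounded, it contains no preimage of $\infty$'s neighbourhood. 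It also lies in the angular sector between the neighbouring invariant lines through the origin, because $\mathcal{A}_0$ is disjoint from the rotated copies $\mathcal{A}_k$, which are unbounded (Theorem~\ref{Imm_unbd}).

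The plan is then to show that this bounded interior $D$ is a Fatou-free contradiction. Because $\partial D\subset\mathcal{A}_0\cup\{e_1\}$, the map $C_n$ sends $D$ into a set whose boundary lies in $\mathcal{A}_0\cup\{e_1\}$. If $D$ contains no pole, the maximum principle gives $C_n(D)\subset D\cup\mathcal{A}_0$-hull, and iterating shows the family $\{C_n^m\}$ is bounded on $D$. Hence $D\subset\mathcal{F}(C_n)$ and $D\subset\mathcal{A}_0$, a contradiction. The poles in the relevant sector are $\xi$ and its rotations, and $\xi>e_1$ lies on the real axis outside $\Gamma$. Any pole of the form $\xi e^{2\pi i j/n}$ is separated from $D$ by the invariant lines argument, so $D$ contains no pole.

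\textbf{Main obstacle.} I expect Step 2 to be the hard part, specifically justifying that $D$ contains no pole and that $C_n(D)$ stays inside the bounded region. I would first try to prove $\Gamma$ can be chosen to lie in the sector $\{0<\arg z<\pi/n\}\cup$ its conjugate, using that $L_1$ (or $L_{-1}$) is invariant and $\mathcal{A}_0\cap L_1$ is controlled by Proposition~\ref{dynamics-criticallines}. If that fails, I would fall back on a topological count: a second access would produce, via pullback by $C_n$ near the repelling fixed point $e_1$ (where $C_n'(e_1)>4$ is real, so linearization preserves the real direction), infinitely many accesses permuted with a rotation number. Real multiplier forces period one, so each access is fixed and reflection-symmetric. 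The only reflection-symmetric access is the real one, which again yields uniqueness.
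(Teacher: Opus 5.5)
Your Step 1 is correct and is the paper's argument: $(0,e_1)\subset\mathcal{A}_0$ by monotone convergence, $e_1$ is repelling and hence in the Julia set, and rotation handles the other solutions of $z^n=e_1^n$. The gap is in Step 2.

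\textbf{The maximum-principle argument is circular.} To bound $\{C_n^m\}$ on $D$ via the maximum principle, every iterate $C_n^m$, not just $C_n$, must be pole-free on $D$. Under your contradiction hypothesis $D$ meets $\mathcal{J}(C_n)$. Since $\infty$ is not exceptional, its backward orbit is dense in $\mathcal{J}(C_n)$, so the open set $D$ then contains a pole of some iterate. In other words, ``no iterate has a pole in $D$'' is equivalent to $D\subset\mathcal{F}(C_n)$, which is exactly what you are trying to prove. The step ``$C_n(D)\subset D\cup\mathcal{A}_0$-hull, and iterating'' does not provide a bounded, pole-free, forward-invariant region. For $n\le 16$, $\mathcal{A}_0$ is unbounded with poles on its boundary, so nothing controls where $C_n^m(\Gamma)$ goes. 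Two further claims are unsupported: that a bounded interior contains no preimage of a neighbourhood of $\infty$, and that $D$ lies in a sector because the $\mathcal{A}_k$ are unbounded.

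\textbf{The fallback does not work either.} A positive real multiplier does not by itself force accesses to be fixed. More seriously, ``fixed by $C_n$'' does not imply ``invariant under $z\mapsto\bar z$''. A conjugate pair of non-real fixed accesses at $e_1$ is consistent with everything you wrote, and that is precisely the configuration you need to exclude.

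\textbf{The missing ingredient is a global count of fixed accesses.} The paper's argument runs as follows.
\begin{enumerate}
\item $\mathcal{A}_0$ is simply connected, because the Julia set is connected.
\item By Riemann--Hurwitz, $d=\deg C_n|_{\mathcal{A}_0}$ is $n+1$ or $2n+1$. The origin contributes multiplicity $n$, and by rotational symmetry either none or all $n$ free critical points lie in $\mathcal{A}_0$.
\item Conjugating by a Riemann map $\varphi$ with $\varphi(0)=0$, chosen to commute with $z\mapsto\bar z$, gives a degree-$d$ Blaschke product. It has superattracting fixed points at $0$ and $\infty$, hence exactly $d-1$ fixed points on $\partial\mathbb{D}$. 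So there are at most $d-1\le 2n$ fixed accesses from $\mathcal{A}_0$.
\item The only conjugation-invariant accesses come from the real diameter. A second access to $e_1$ is therefore non-real and comes with its conjugate, giving three accesses at $e_1$.
\item By rotation, the $n$ points $\lambda e_1$ would then carry $3n>2n$ fixed accesses, a contradiction.
\end{enumerate}

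What legitimately makes the extra accesses fixed is not the sign of the multiplier. It is that the real access along $(0,e_1)$ is fixed, and $C_n$, an orientation-preserving local homeomorphism at $e_1$, preserves the cyclic order of accesses there. That is the correct replacement for your fallback, and it is what feeds into the count.
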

\begin{proof}
	Consider the interval $(0,e_{1})$. On this interval, $C_{n}$ is increasing, does not have any critical point, fixed point or pole, and satisfies $C_{n}(x) < x$. Therefore,  
	$$\lim_{k\rightarrow\infty} C_{n}^{k}(x) = 0 \quad \text{for all}\quad x \in (0,e_{1}).$$
	This implies that $\mathcal{A}_{0}$ contains $(0,e_{1})$, and hence, by symmetry, all extraneous fixed points satisfying $z^{n}=e_{1}^{n}$ lie on $\partial \mathcal{A}_{0}$.  
	
	Now we prove the second part. Let $\deg_{\mathcal{A}_{0}} C_{n} = d$. Since  $\mathcal{A}_0$ is invariant under the rotations of order $n$ about the origin, $d$ is either $n+1$ (if $\mathcal{A}_{0}$ contains no free critical points) or $2n+1$ (if $\mathcal{A}_{0}$ contains free critical points).  Since $\mathcal{A}_{0}$ is simply connected, there exists a normalized Riemann map $\varphi : \mathbb{D} \rightarrow \mathcal{A}_{0}$ such that $\varphi(0)=0$, and $\varphi((0,1)) \subseteq \mathbb{R}^{+} \cap \mathcal{A}_{0}$. Consider the inner function    
	$$f = \varphi^{-1} \circ C_{n} \circ \varphi : \mathbb{D} \rightarrow \mathbb{D}$$ associated to $C_{n}|_{\mathcal{A}_{0}}$. It follows that $\deg_{\mathbb{D}} f = d$. Using Schwarz's reflection principle, $f$ can be extended to a rational map $F : \widehat{\mathbb{C}} \rightarrow \widehat{\mathbb{C}}$ of degree $d$.
	
	Consequently, $F$ has $d+1$ fixed points. By construction, $0$ and $\infty$ are superattracting fixed points of $F$, having immediate basins $\mathbb{D}$ and $\widehat{\mathbb{C}} \setminus \overline{\mathbb{D}}$, respectively.  The remaining $d-1$ fixed points of $F$ lie on the unit circle. Let these be $\beta_{1}, \beta_{2}, \dots, \beta_{d-1}$. Each of these fixed points characterizes a distinct access to a fixed point on $\partial \mathcal{A}_{0}$ (as a fixed landing ray).  
	
	We claim that each fixed point from the set
	$\{z : z^{n} = e_{1}^{n}\}$ corresponds to exactly one such $\beta_{i}$ for $i = 1, 2, \dots, d$.  Consider $e_1$. Since $(0,e_1) \subseteq \mathcal{A}_0$, this interval determines an access to $e_{1}$ from $\mathcal{A}_{0}$, which corresponds to, say $\beta_{1}$. Suppose there exists a second access to $e_{1}$ from $\mathcal{A}_0$. This access must correspond to another fixed point $\beta_{i}$, without loss of generality, say $\beta_{2}$. Therefore, this second access must lie in either the upper or lower half-plane.  
	
	Symmetry with respect to the real axis implies that there must be another access to $e_{1}$ from $\mathcal{A}_0$ lying in the opposite half- plane, corresponding to some $\beta_{3}$ (say). Thus, there would be at least $3$ accesses to $e_{1}$ from $\mathcal{A}_{0}$.  This suggests that there are at least $3n$ total accesses to these extraneous fixed points. Hence, we would have $d-1 \ge 3n$, which implies $d \geq 3n+1$. However, this is not possible since the maximum value of $d$ is $2n+1$ for $(n \geq 2)$. This completes the proof.
\end{proof} 

\begin{rem}\label{one_access-A1}
	Using the arguments from the proof of Proposition~\ref{e_1onBdryA_0}, it follows that each immediate basin corresponding to a non-zero root of $p_n$ has exactly one access to $\infty$.
\end{rem}	

From Remark~\ref{Crit_A_0}, it follows that all free critical points are in $\mathcal{A}_0$. In this case, if $c$ is a free critical point, then $|C_n(c)|<|c|$ (see Lemma~\ref{c_and_c^*}). The next result deals with the case when $|C_n(c)|>|c|$.

\begin{prop}\label{n-geq-17}
	If  $c$ is a free critical point of $C_n$ and $|C_n(c)|>|c|$, then $c \notin \mathcal{A}_0$. 
\end{prop}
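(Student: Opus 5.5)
We argue by contradiction: we assume $c\in\mathcal{A}_0$ and find a point of $\mathcal{A}_0$ whose orbit does not tend to $0$. By Lemma~\ref{c_and_c^*}, the hypothesis $|C_n(c)|>|c|$ means exactly $n\ge 17$. By Lemma~\ref{rot-symm} and the rotational invariance of $\mathcal{A}_0$, it suffices to treat one free critical point on each type of critical line. We split by the parity of $n$, following the line dynamics in Remark~\ref{dynamics-invariantlines}.

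\emph{Odd $n$.} Take $c=c_r<0$, so $c^*=c_r^*>0$ by Equation~\eqref{critical-point-value}. If $n\ge 19$, Proposition~\ref{Odd_free_cr}(1) gives $c_r^*>e_2$. If $n=17$, Proposition~\ref{Odd_free_cr}(2) gives $C_n(c_r^*)>1>e_2$. In both cases, the first part of Proposition~\ref{dynamics-criticallines} shows that the orbit of $c_r$ converges to $1$. Hence $c_r$ lies in the basin of $1$, so $c_r\notin\mathcal{A}_0$; the other free critical points are handled by rotation. This case is essentially bookkeeping.

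\emph{Even $n$.} Now $c=\lambda_k\tilde c$ with $k$ odd, and on $L_k$ the map $C_n$ is conjugate to $Q_n$ on $\mathbb{R}$ (Remark~\ref{dynamics-invariantlines}(2)(a)). Recall that $Q_n$ is odd, $Q_n(\tilde c)<0$, and the hypothesis reads $-Q_n(\tilde c)>\tilde c$. Put $g(x)=-Q_n(x)$ for $x>0$, so that $Q_n^2=g\circ g$ on $(0,\infty)$ by oddness.
\begin{itemize}
\item Near $0$ we have $g(x)\sim \tfrac{n(n-1)}{2}x^{n+1}<x$, while $g(\tilde c)>\tilde c$.
\item By the intermediate value theorem there is $x_1\in(0,\tilde c)$ with $g(x_1)=x_1$.
\item Thus $\{x_1,-x_1\}$ is a $2$-cycle of $Q_n$, and $\{\lambda_k x_1,-\lambda_k x_1\}$ is a $2$-cycle of $C_n$ lying on the segment of $L_k$ between $0$ and $c$.
\end{itemize}
If that whole segment lay in $\mathcal{A}_0$, this $2$-cycle would be a non-fixed periodic orbit inside the basin of the fixed point $0$, which is absurd.

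It therefore remains to show that $c\in\mathcal{A}_0$ forces the segment $[0,c]\subset\mathcal{A}_0$, and I expect this to be the main obstacle. The plan is to use the reflection symmetry of $C_n$ about $L_k$ (Lemma~\ref{Inv_lines_ref-sym}(1)), which makes $\mathcal{A}_0$ symmetric about $L_k$. For a simply connected domain symmetric about a line, its intersection with that line is connected. Indeed, if a point of $L_k$ between two points of $\mathcal{A}_0\cap L_k$ were missing, then a path in $\mathcal{A}_0$ joining them, together with its reflection, would enclose that missing point, which is impossible in a simply connected domain. So $\mathcal{A}_0\cap L_k$ is an interval containing $0$ and $c$, and hence contains $[0,c]$.

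This step uses the simple connectivity of $\mathcal{A}_0$, which the paper also uses in Proposition~\ref{e_1onBdryA_0}. If that is not available at this point, I would instead take the component of $\mathcal{A}_0\cap L_k$ containing $0$; it is an open interval $(-a,a)$ invariant under $Q_n$. Its endpoint $a$ lies in the Julia set and satisfies $Q_n(a)=\pm a$. Since $Q_n(x)<x$ for $x>0$ (Remark~\ref{Qn-obs}(2)), this forces $g(a)=a$, and I would then compare the location of $a$ with $\tilde c$ using the monotonicity of $Q_n$ on $(0,\tilde c)$ and on $(\tilde c,\infty)$.
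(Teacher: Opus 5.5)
Your proof is correct. The even case follows the paper's own argument, while the odd case takes a genuinely different route.

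\textbf{Even $n$.} The paper, like you, applies the intermediate value theorem to $Q_n(x)+x$ on $(0,\tilde c)$ to get a $2$-cycle $\{\pm\lambda_1x^*\}$ of $C_n$ on the critical line. It then rules out $c_0\in\mathcal{A}_0$ with a loop built from a curve in $\mathcal{A}_0$ (joining $c_0$ and $C_n(c_0)$) and its mirror image in $L_1$, which would surround $\lambda_1x^*\in\mathcal{J}(C_n)$. Your statement that $\mathcal{A}_0\cap L_k$ is an interval is the same idea. It is slightly cleaner, since the path-plus-reflection loop has odd winding number about any omitted point of $L_k$ between its endpoints. (Only $\lambda_k x_1$, not $-\lambda_k x_1$, lies on the segment from $0$ to $c$, but one point of the cycle is all you need.)

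\textbf{Odd $n$.} Here the routes differ. The paper assumes $c_r\in\mathcal{A}_0$, joins $0$ to $c_r^*$ inside $\mathcal{A}_0$, and uses this curve with its complex conjugate to surround $e_1\in\mathcal{J}(C_n)$, contradicting connectedness of the Julia set. That argument needs only $c_r^*>e_1$, and it treats both parities with one topological mechanism. You instead push the real orbit of $c_r$ into $(e_2,\infty)$, using both parts of Proposition~\ref{Odd_free_cr} and the first part of Proposition~\ref{dynamics-criticallines}. This puts $c_r$ in the basin of $1$. Your argument is purely one-dimensional, needs no topology, and proves more: it is exactly the argument the paper later uses for Theorem~\ref{Convergent}. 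The price is reliance on the sharper numerical fact $C_{17}(c_r^*)>1$ when $n=17$.

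\textbf{Simple connectivity of $\mathcal{A}_0$.} You do not need to hedge here; it follows from Theorem~\ref{Connected_J.set}. A rational map with connected Julia set has only simply connected Fatou components. Since $\infty\in\mathcal{J}(C_n)$, $\mathcal{A}_0$ is a plane domain, so winding numbers about points outside it vanish.

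This matters, because your fallback would not finish the proof on its own. Knowing the endpoint $a$ of the component of $\mathcal{A}_0\cap L_k$ containing $0$ (with $Q_n(a)=-a$) does not exclude $c$ lying in some other component of $\mathcal{A}_0\cap L_k$. That possibility is exactly what the symmetric-loop argument rules out.
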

\begin{proof}
	Since the set of all free critical points, as well as the immediate basin $\mathcal{A}_0$ of $0$, are preserved under  $z \mapsto \lambda z$ for $\lambda^n =1$, it is enough to prove that any one free critical point is not in $ \mathcal{A}_0$.
	There are two cases, depending on the parity of $n$.  
	\par
	\noindent \underline{\textit{Case 1: $n$ is odd.}}	\\
	Let $n$ be odd. Then $C_n$ has a real free critical point  $c_r$ (see Figure~\ref{L_k}(a)). The corresponding critical value $c_r^* =C_n (c_r)$ satisfies $c_r^* >e_1$ as $n \geq 17$ (see Lemma~\ref{c_and_c^*} and Proposition~\ref{Odd_free_cr}). Suppose, on the contrary, that $c_r \in \mathcal{A}_0$. Since $\mathcal{A}_0$ is forward invariant under $C_n$, the point $c_r^*$ must also belong to $\mathcal{A}_0$. Since $\mathcal{A}_0$ is connected, there must exist a simple curve $\gamma$ in $\mathcal{A}_0$ joining $0$ and $c_r^*$. This curve cannot lie entirely on the real line as $\infty$ and $e_1$ belong to the Julia set. By the symmetry of $\mathcal{A}_0$ about the real axis, the complex conjugate curve $\bar{\gamma}=\{\overline{z}: z \in \gamma\}$ is also contained in $\mathcal{A}_0$ and connects $0$ to $c_r^*$ and $\Gamma =\gamma \cup \bar{\gamma}$ forms or contains a simple closed curve in $\mathcal{A}_0$ enclosing $e_1$. Therefore, $\Gamma$ separates the component of the Julia set containing $e_1$ from the one containing $\infty$  (see Figure~\ref{isolated_Julia_comp} for a schematic demonstration). This contradicts the connectedness of the Julia set of $C_n$. Therefore $c_r \notin \mathcal{A}_0$.

	\noindent	\underline{\textit{Case 2: $n$ is even.}}\\
	Now consider even $n$. Then $C_n$ has a free critical point $c_0$ on $L_1$ (see Figure~\ref{L_k}(b)).
	It follows from Lemma~\ref{Inv_lines_ref-sym}(2) that $C_{n} (\lambda_1 x)=\lambda_1 Q_n (x)$ for all $x \in \mathbb{R}$.
	Following Equation~(\ref{Q-deri}), we have
	$$Q_n'(x) = \frac{n(n+1)^2(2n+1)x^{n}(x^{n}+1)^{2} \big(x^{n} - \tilde{c}^n\big)}{2\big((n+1)x^{n}+1\big)^4},$$
	where $\tilde{c}$ denotes the positive critical point of $Q_n$. Thus the function  $Q_n$ decreases in $(0, \tilde{c})$ and increases thereafter.
	\par  
	Each free critical point $c$ of $C_n$ is of the form $\lambda c_0$ for some $\lambda$ with $\lambda^n =1$. Since $c_0$, the free critical point of $C_n$ lying on $L_1$ is nothing but $\lambda_1 \tilde{c}$, we have $c= \lambda \lambda_1 \tilde{c}$. It follows from the hypothesis of this proposition and Lemma~\ref{Inv_lines_ref-sym} (2) that $|Q_n (\tilde{c})|=|C_n (c)|> |c| =|\tilde{c}|$. Since $\tilde{c}>0$ and $Q_n (\tilde{c}) <0$, we have $Q_n (\tilde{c}) +\tilde{c}<0$.  On the other hand, since the origin is a real superattracting fixed point of $Q_n$, $Q_n (x)+x>0$ for sufficiently small $x>0$.By the Intermediate Value Theorem, there exists a unique point $x^*$ (choose smallest such $x^*$) in $(0, \tilde{c})$ such that $Q_n(x^*) = -x^*$.  Moreover, since $Q_n$ is odd, $Q_n^2(x^*) = x^*$, and therefore  
	$\{-x^*, x^*\}$ forms a $2$-periodic cycle. Note that $\lim_{x \to +\infty}Q_n (x)=+\infty.$ The graph of $Q_{18}$ is given in Figure~\ref{Graph_Q}(b) to demonstrate these properties.
	\par  
	The function $Q_n ^2: (0, x^*) \to (0, x^*)$ is increasing and satisfies $Q_n ^2 (x)<x$. It follows that $\lim_{k \to \infty}Q_n ^{2k} (x)=0$ for all $0< x < x^*$. Clearly, the line segment $l=\{\lambda_1 x: -x^* < x< x^*\}$ contained in $L_1$ is exactly equal to $L_1 \cap \mathcal{A}_0$. If $c_0 =\lambda_1 \tilde{c}$ belongs to $\mathcal{A}_0$ then there exists a simple closed curve in $\mathcal{A}_0$ joining $c_0$ and $C_n (c_0)$ that separates $\lambda_1 x^*$ from $\infty$. Both of these points are in the Julia set. This contradicts the simple connectivity of $\mathcal{A}_0$. The argument is identical to that of Case~1. Thus, the free critical point $c_0$ does not belong to $\mathcal{A}_{0}$. 
	\begin{figure}[htbp]
		\centering
		\begin{tikzpicture}[scale=1.5]
		
		% Draw the real axis line (olive/yellow-green color)
		\draw[->, thick, olive] (-1.5,0) -- (4.5,0);
		
		% Draw the outer blue curve (simulating the hand-drawn cloud shape)
		\draw[thick, blue] plot[smooth cycle, tension=0.8] coordinates {
			(-0.31,0) (0,0.5) (0.8,0.7) (1.5,0.6) (2.5,0.8) (3.5,0.5) (3.9,0) 
			(3.4,-0.6) (2.4,-0.9) (1.5,-0.7) (0.7,-0.8) (0,-0.5)
		};
		
		% Draw the inner isolated Julia component (black cloud shape)
		\draw[thick, black, fill=white] plot[smooth cycle, tension=0.7] coordinates {
			(0.9,0) (1.2,0.2) (1.7,0.1) (2.2,0.2) (2.5,0) 
			(2.1,-0.2) (1.6,-0.1) (1.2,-0.2)
		};
		
		% Label the isolated Julia component with an arrow
		\draw[thick, <-, >=stealth] (1.7, 0.25) .. controls (1.9, 0.6) and (2.1, 0.7) .. (3.2, 0.8) 
		node[right] {Isolated Julia component};
		
		% --- ADDED LINE: Draw the completed line from e_1 to c* ---
		\draw[ thick, olive] (0.88,0) -- (3.88,0);
		
		% Draw and label the specific points on the axis
		% Point 0
		\filldraw[olive] (-0.3,0) circle (1.5pt);
		\node[below left, olive] at (-0.3,0) {$0$};
		
		% Point e_1
		\filldraw[olive] (0.88,0) circle (1.5pt);
		\node[below left, olive] at (0.88,0) {$e_1$};
		
		% Point c*
		\filldraw[olive] (3.88,0) circle (1.5pt);
		\node[below right, olive] at (3.88,0) {$c^*$};
		
		\end{tikzpicture}
		\caption{Representation of the isolated Julia component.}
		\label{isolated_Julia_comp}
	\end{figure}
\end{proof}
%---------------------------------

%%%%%%%%%%%%%%%%%%%%%%%%%%%%%%%%%%%%
%\begin{figure}[h!]
%	\begin{subfigure}{.5\textwidth}
%		\centering
%		\includegraphics[width=1\linewidth]{Plot_Q_n16.png}
%		\caption{$n=16$}
%	\end{subfigure}
%	\begin{subfigure}{.5\textwidth}
%		\centering
%		\includegraphics[width=1\linewidth]{Plot_Q_n18.png}
%		\caption{$n=18$}
%	\end{subfigure}
%	\caption{The graphs of $Q: \mathbb{R} \to \mathbb{R}$.}
%	\label{Graph_Q}
%\end{figure}
%%%%%%%%%%%%%%%%%%%%%%%%%%%%%%%%%%%%

 With these results, we are now in a position to prove Theorem~\ref{Bdd_imm}, which completely characterizes the boundedness and unboundedness of $\mathcal{A}_{0}$ according to the value of $n$.

\begin{proof}[Proof of Theorem~\ref{Bdd_imm}]
	The proof is carried out by considering the following two cases:
	
	\noindent
	Case~(1): $n\leq 16$. In this case, $|c^*|<|c|$.
	
	We prove that $\mathcal{A}_0$ is unbounded. For odd $n$, the immediate basin $\mathcal{A}_0$ corresponding to $0$ contains the unbounded interval $(-\infty,e_1)$ by Proposition~\ref{dynamics-criticallines}(1).
    For even $n$,  it follows from Remark~\ref{dynamics-invariantlines}(2(a)) that $\lambda_k ^{-1}C_n (\lambda_k x) =Q_n (x)$ for all odd $k$ and real $x$. Since $ \lim_{m \to \infty} Q_n ^m (x)= 0$ for all 
    $x \in \mathbb{R}$ (see Proposition~\ref{dynamics-criticallines}(2)), the line $L_k = \{\lambda_k x: x \in \mathbb{R}\}$ for odd $k$ is contained in $\mathcal{A}_0$. Therefore, $\mathcal{A}_0$ is unbounded. \\

	\noindent
	Case~(2): $n\geq 17$. In this case, $|c^*|>|c|$.
	
	We prove that $\mathcal{A}_0$ is bounded. It follows from Proposition~\ref{n-geq-17} that $\mathcal{A}_0$ does not contain any free critical point. Thus, $\mathcal{A}_0$ contains exactly one distinct critical point, namely, the origin. It follows that $\deg_{\mathcal{A}_0} C_{n} = n+1$. Consequently, the inverse Böttcher coordinate can be extended globally to $\mathcal{A}_0$, i.e., $C_{n}$ restricted to $\mathcal{A}_0$ is conjugate to the monomial $z \mapsto z^{n+1}$. By Proposition~\ref{e_1onBdryA_0}, there are exactly $n$ internal fixed rays, each landing at one of the $n$ finite extraneous fixed points of $C_n$.  Consequently, no internal fixed ray lands on $\infty$. As $\infty$ is a repelling fixed point with positive multiplier, if it is a boundary point of $\mathcal{A}_0$, then it would necessarily be the landing point of a periodic ray. Therefore, the period of such a ray would have to be at least $2$. However, near $\infty$, $C_{n}$ is locally conjugate to a linear map of the form $z \mapsto \lambda z$.  This local linearity prevents the existence of such periodic rays accumulating at $\infty$, implying that $\infty \notin \partial \mathcal{A}_0$. Hence $\mathcal{A}_0$ is bounded.
\end{proof}

 Figures~\ref{n-dynamics}(a), (b), and (c) illustrate the unboundedness of $\mathcal{A}_0$ for $n=5,6$, and $16$, respectively, whereas, Figure~\ref{n-dynamics}(d) illustrates the boundedness of $\mathcal{A}_0$ for $n=17$.

 The unboundedness of $\mathcal{A}_0$ and Proposition~\ref{Odd_free_cr} set the stage for proving that $C_n$ is convergent whenever $n\leq16$ or $n$ is odd, which is precisely the statement of Theorem~\ref{Convergent}.

\begin{proof}[Proof of Theorem~\ref{Convergent}]
For $n \leq 16$, the proof is done as all free critical points are in $\mathcal{A}_0$ (see Remark~\ref{Crit_A_0}). Now consider $n \geq 17$ and $n$ odd. It follows from Proposition~\ref{Odd_free_cr} that the critical value corresponding to the real free critical point, or its image is greater than $e_2$, the larger real extraneous fixed point of $C_n$. Since $(e_2, \infty)$ is contained in the immediate basin of $1$, it follows that the real free critical point  is in the basin of $1$. As the set of all free critical points  and the set  $\cup_{i=1}^{n} \mathcal{A}_i$ are preserved under rotations (about the origin) of order $n$, it is clear that each free critical point is in the immediate basin of some non-zero root of $p_n$.  Hence, the map $C_n$ is convergent.
\end{proof}

Recall that a rational function is called geometrically finite if every critical point belonging to the Julia set has a finite forward orbit. Tan and Yin established in \cite[Theorem A]{Tan1996} that, for a geometrically finite rational map $R$, the Julia set $\mathcal{J}(R)$ is locally connected whenever it is connected. As the free critical points of $C_n$ are in the Fatou set, it is a geometrically finite rational map, and from Theorem~\ref{Connected_J.set} we conclude that $\mathcal{J}(C_n)$ is locally connected whenever $n\leq 16$ or odd.

\subsection{Proof of Theorem~\ref{Equal_sym}}

	It follows from Theorem~\ref{Convergent} that $C_n$ does not have any periodic (of period at least $2$) attracting or parabolic periodic domain whenever $n \leq 16$ or $n$ is odd. However, for $n\geq 18$ even, the existence of such a domain is not disproved. If such a domain exists, then the free critical points should belong to the union of such domains. In such a scenario, although $C_n$ is not convergent, it will be a geometrically finite map. Hence, the Julia set $\mathcal{J}(C_n)$ will be locally connected.
	
	Moreover, for $n$ even, each of the lines $L_{2j+1}$, for $j=0, \dots, (n-2)/2$, contains exactly two free critical points, both of which have the same modulus. As these lines are invariant and pass through the origin, a parabolic domain $\Omega$ of $C_n$ can contain at most one free critical point. Therefore, each of $\sigma^{l}(\Omega)$, where $\sigma^{l}(z)=e^{\frac{2\pi l}{n}}z$ for $l=0,1,\dots, n-1$, contains at most one free critical point.
	
	The next result will be helpful to prove Theorem~\ref{Equal_sym}.
	\begin{prop}\label{Para_access}
		Let $\Omega$ be an unbounded parabolic domain that contains a free critical point. Then there are at least two accesses to infinity from $\Omega$.
	\end{prop}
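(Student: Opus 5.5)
The idea is to combine the reflection symmetry of $C_n$ about the critical line through the critical point with the structure of the parabolic domain. Let $\Omega$ be the unbounded parabolic domain and $c\in\Omega$ the free critical point. Here $n$ is even; this is the only situation left open by Theorem~\ref{Convergent}. By Lemma~\ref{criticallines}(2), $c$ lies on an odd-numbered line $L_k$, and by the discussion preceding the proposition, $\Omega$ contains no other free critical point. Let $\rho_k(z)=\lambda_k^2\overline{z}$ denote the reflection about $L_k$. Lemma~\ref{Inv_lines_ref-sym}(1) says that $\rho_k$ conjugates $C_n$ to itself, so $\rho_k$ permutes Fatou components and preserves their types and periods. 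Since $c\in\Omega\cap\rho_k(\Omega)$, we get $\rho_k(\Omega)=\Omega$. Thus $\Omega$ is symmetric about $L_k$.

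First I would show that $\Omega$ is simply connected. It contains only one free critical point, and the remaining critical points lie in the attracting basins or are poles. A standard argument, as in \cite[Theorem~9.3]{Milnor_book} adapted to parabolic basins, should give simple connectivity. Failing that, I would note that a multiply connected periodic component would force a disconnected Julia set, contradicting Theorem~\ref{Connected_J.set}.

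Next I would locate an unbounded path in $\Omega$ that leaves $L_k$. The set $\Omega\cap L_k$ is a union of open intervals on the invariant line $L_k$. On $L_k$ the dynamics is conjugate to $Q_n$ on $\mathbb{R}$ (Remark~\ref{dynamics-invariantlines}), and for $n\ge 17$ the segment $L_k\cap\mathcal{A}_0=\{\lambda_k x: |x|<x^*\}$ comes from the proof of Proposition~\ref{n-geq-17}. Using this one-dimensional dynamics of $Q_n$, together with the fact that parabolic points are in the Julia set, I would argue that $L_k\cap\Omega$ is bounded. The orbit of $c$ under $Q_n$ must accumulate on a parabolic cycle on $L_k$ located at finite positions, so the interval of $\Omega\cap L_k$ containing $c$ lies between Julia points of $L_k$. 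Since $\Omega$ is unbounded, there is a curve $\gamma\subset\Omega$ from $c$ to $\infty$. We may take $\gamma$ to leave $L_k$ and, after reflecting pieces, to lie (apart from its starting point) in one open half-plane $H$ bounded by $L_k$. Then $\rho_k(\gamma)\subset\Omega$ is a curve from $c$ to $\infty$ in the other half-plane.

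The main step is to show that $\gamma$ and $\rho_k(\gamma)$ define distinct accesses to $\infty$. Suppose they were homotopic in $\Omega$ relative to their endpoints. Then, by simple connectivity, the closed curve $\gamma\cup\rho_k(\gamma)\cup\{\infty\}$ would bound a region of $\widehat{\mathbb{C}}$ contained in $\overline{\Omega}$ with no Julia points inside. This Jordan curve crosses $L_k$ only at $c$ and at $\infty$, so one of its complementary regions contains one of the two unbounded rays of $L_k\setminus\Omega$. That ray contains Julia points (points of $\partial\Omega\cap L_k$), a contradiction; hence the two accesses differ. I expect the bounding of $\Omega\cap L_k$ to be the delicate part. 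If it fails, the fallback is to argue via a Riemann map $\varphi:\mathbb{D}\to\Omega$ chosen to commute with the reflections, namely $\rho_k$ on $\Omega$ and complex conjugation on $\mathbb{D}$. Accesses to $\infty$ correspond to boundary points of $\mathbb{D}$ landing at $\infty$, and the reflection-invariant ones are $\pm1$. A single access would therefore have to sit at $\pm1$, which means approaching $\infty$ along $L_k$; this is impossible once $\Omega\cap L_k$ is bounded.
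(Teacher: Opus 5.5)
Your overall strategy matches the paper's. You work on the critical line $L_k$ (the paper conjugates by $z\mapsto\lambda_k z$ to $Q_n$ and works on $\mathbb{R}$). You show that no access to $\infty$ can run along that line, and you reflect an access lying in a half-plane to get a second one. Your Jordan-curve argument (equivalently, the $\rho_k$-equivariant Riemann map) for why $\gamma$ and $\rho_k(\gamma)$ are \emph{distinct} accesses is more complete than the paper's proof, which only asserts it.

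The step you flag as delicate is not established by what you wrote. Write $c=\lambda_k\tilde c$. Convergence of the orbit of $c$ to a finite parabolic cycle says nothing about where the interval of $\Omega\cap L_k$ through $c$ ends. In particular it does not exclude that the whole ray $\lambda_k(\tilde c,\infty)$ lies in $\Omega$. The point $\lambda_k x^*$ from Proposition~\ref{n-geq-17} only controls the side facing $0$.

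The missing input, which is the key observation in the paper's proof, is that $Q_n$ has zeros on both sides of $\tilde c$. One is the origin. The other is $\tilde x_r$, with $\tilde x_r^{\,n}=\frac{1}{2n+1}\bigl(-1+\frac{n\sqrt2}{\sqrt{n+1}}\bigr)>\frac{n-1}{(n+1)(2n+1)}=\tilde c^{\,n}$; the numerators differ by $n(\sqrt{2(n+1)}-2)>0$. Since $C_n(\lambda_k\tilde x_r)=\lambda_kQ_n(\tilde x_r)=0$, both $0$ and $\lambda_k\tilde x_r$ lie in the basin of $0$, hence not in $\Omega$. So each ray of $L_k$ issuing from $c$ contains a point outside $\Omega$, which is exactly what your Jordan-curve argument needs.

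For the folding step and the Riemann-map fallback you also need $\Omega\cap L_k$ itself to be bounded. This follows because $\Omega$ is simply connected (your second justification, via Theorem~\ref{Connected_J.set}, is the correct one). A simply connected $\rho_k$-symmetric domain meets $L_k$ in a connected set: a path in $\Omega$ between two points of $L_k$, together with its reflection, winds an odd number of times around every point of $L_k$ between them. Hence $\Omega\cap L_k=\varphi((-1,1))\subset\lambda_k(0,\tilde x_r)$.

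The second gap is the sentence ``since $\Omega$ is unbounded, there is a curve $\gamma\subset\Omega$ from $c$ to $\infty$''. Unboundedness only gives $\infty\in\partial\Omega$, not that $\infty$ is accessible. Without at least one access the conclusion fails outright, and your fallback tacitly assumes the same thing when it speaks of boundary points of $\mathbb{D}$ landing at $\infty$.

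The paper gets accessibility from local connectivity of $\mathcal{J}(C_n)$. In the scenario at hand $C_n$ is geometrically finite and $\mathcal{J}(C_n)$ is connected (Theorem~\ref{Connected_J.set}), so Tan--Yin applies. You need to invoke this, or another accessibility result for $\infty$, before constructing $\gamma$. With these two repairs your argument goes through.
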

	
	\begin{proof}
		%It follows from Theorem~\ref{Convergent} that $n\geq 18$, and therefore, $|c^*|>|c|$. 
		As the dynamics of $C_n$ and $Q$ are equivalent, we assume that $Q$ has an unbounded parabolic domain $\Omega_Q$ that contains a free critical point. Moreover, without loss of generality, assume that $c_0\in \Omega_Q$, $c_0$ is the real free critical point of $Q$.
		Since the Julia set of $C_n$, and therefore of $Q$, is locally connected, $\infty$ is accessible from $\Omega_Q$. 
		Note that each of the intervals $(-\infty, c_0)$ and $(c_0,\infty)$ contains a root of $Q(x)=0$. Thus, neither of those intervals can lie entirely on $\Omega_Q$. Therefore, an access to $\infty$ from $\Omega_Q$ should be either on the upper or lower half-plane. The symmetry of $Q$ about the real axis guarantees the existence of another access to $\infty$ from $\Omega_Q$. 
	\end{proof}
	
	\begin{rem}
		Let $V$ be an unbounded preimage component corresponding to $\Omega$, i.e.,
		$C_n(V)=\Omega.$ Then $V$ contains no critical point of $C_n$, and therefore, $V$ also has at least two accesses to $\infty$.
		
		Indeed, in this case the restriction
		$C_n:V\to \Omega$
		is conformal. Hence distinct accesses to $\infty$ in $\Omega$ lift homeomorphically to distinct accesses in $V$. Since there is no critical branching in $V$, two distinct lifted accesses cannot merge together. Therefore, $V$ must also have at least two accesses to $\infty$.
	\end{rem}

We are now in a position to prove Theorem~\ref{Equal_sym}, where equality of $\Sigma p_n$  and $\Sigma C_n$ is established.

\begin{proof}[Proof of Theorem~\ref{Equal_sym}]
	Since $\mathcal{A}_1$ is unbounded, for every $\sigma\in \Sigma C_n$, $\sigma(\mathcal{A}_1)$ is an unbounded Fatou component.
	For $n\leq 16$ or $n$ odd, $C_n$ is convergent. It follows from Lemma~\ref{bdd_pre-im} that all pre-images of each of the immediate basins are bounded. Therefore, $\Sigma C_n$ contains at most $n$ elements, and hence, this set is identical with $\Sigma p$.
	
	Consider $n\geq 18$ and even. 
	
	It follows from Proposition~\ref{no_rot-dom} that for every $n$, $C_n$ does not have any rotation domain. If $C_n$ has no parabolic domain, then by \cite[Theorem C]{Nayak-Pal2025}, the result holds.
	
	If there are parabolic domains, but all of them are bounded, then we are also done following Lemma~\ref{bdd_pre-im}. 
	
	Let $\sigma(\mathcal{A}_1)=\Omega$, where $\Omega$ is an unbounded parabolic domain. Then, for some $k\in \mathbb{N}$, $C_n^k(\Omega)$ contains a free critical point. It follows from Proposition~\ref{Para_access} that $\Omega$ has at least two accesses to $\infty$. It is not possible as $\mathcal{A}_1$ has exactly one access (see Remark~\ref{one_access-A1}). Thus, $\mathcal{A}_1$ can only be mapped to $\mathcal{A}_i$, $i=1,2,\dots, n$, under any rotation in $\Sigma C_n$. Hence, the symmetry groups of $p$ and $C_n$ are identical. 
\end{proof}
%%%-------
\begin{figure}[h!]
	\begin{subfigure}{.5\textwidth}
		\centering
		\includegraphics[width=1.1 \linewidth]{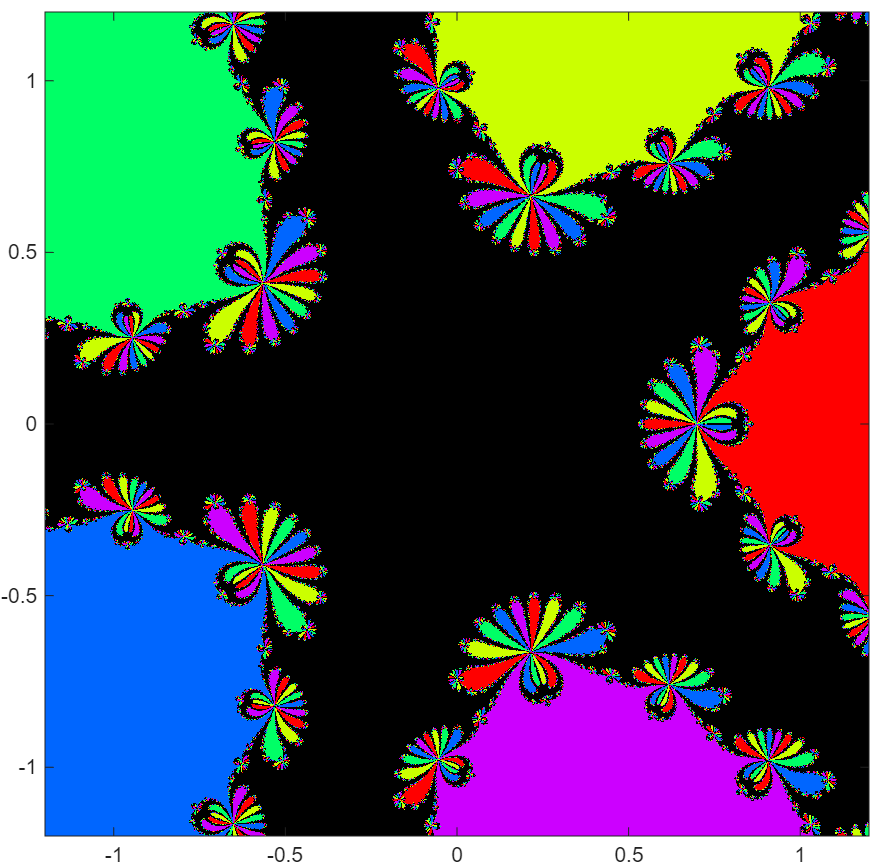}
		\caption{$n=5$}
	\end{subfigure}
	\begin{subfigure}{.5\textwidth}
		\centering
		\includegraphics[width=1.1 \linewidth]{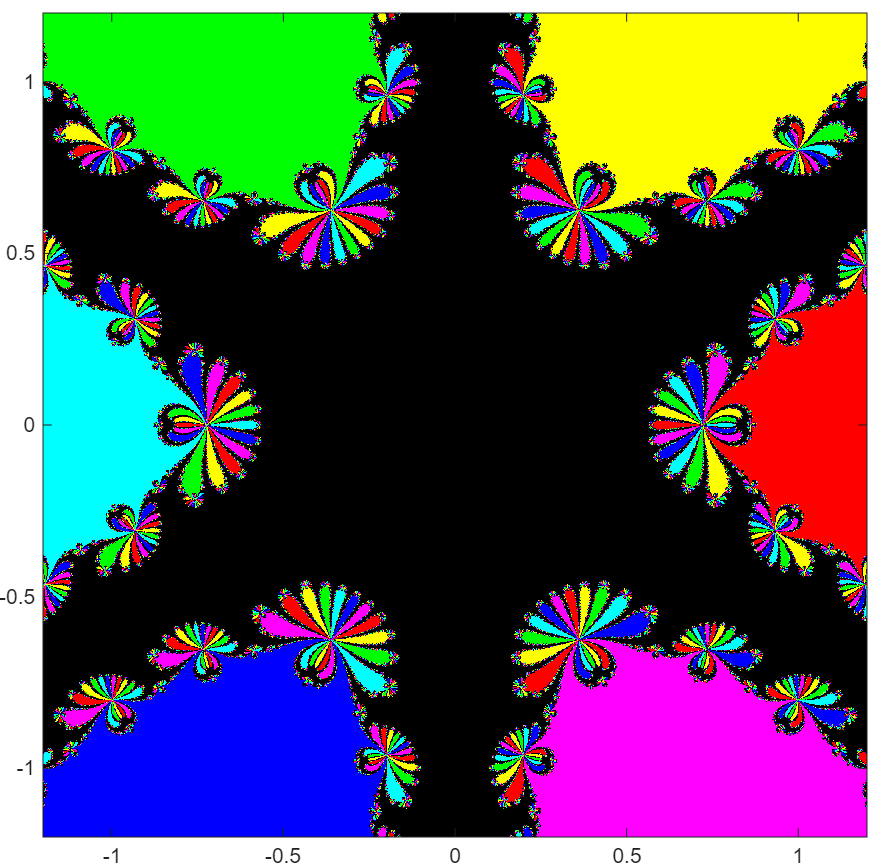}
		\caption{$n=6$}
	\end{subfigure}
	\vspace{0.5cm}
	\begin{subfigure}{.5\textwidth}
		\centering
		\includegraphics[width=1.1 \linewidth]{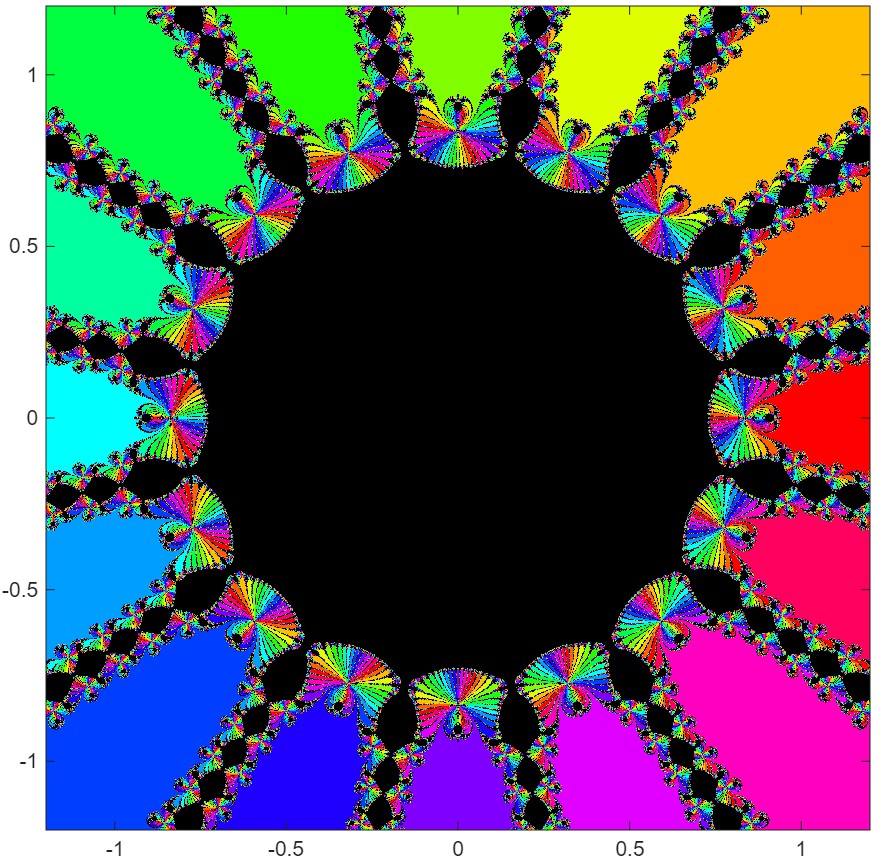}
		\caption{$n=16$}
	\end{subfigure}
	\begin{subfigure}{.5\textwidth}
		\centering
		\includegraphics[width=1.1 \linewidth]{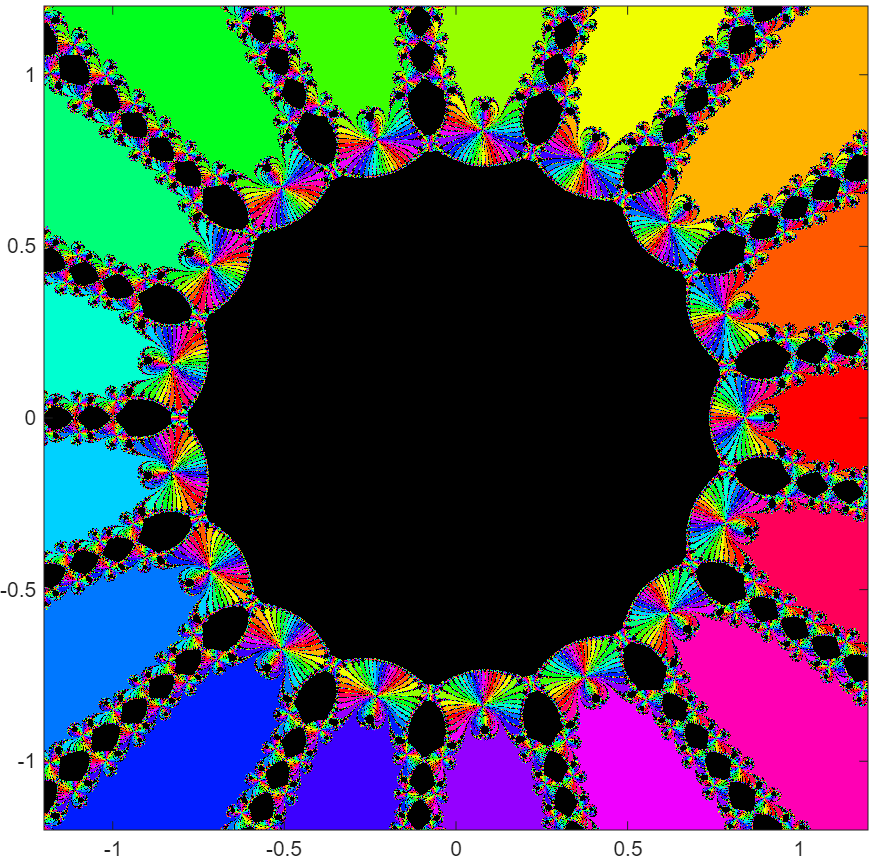}
		\caption{$n=17$}
	\end{subfigure}
	\caption{Dynamical planes of $C_n$.}
	\label{n-dynamics}
\end{figure} 
%%%------
 Figure~\ref{n-dynamics} represents the dynamical planes of $C_n$ for certain values of $n$. Different shades of colors represent different attracting basins, with black representing the basin of $0$. It can be observed that $\mathcal{A}_0$ is unbounded for $n=5$, $6$, and $16$, while it is bounded for $n=17$. It can also be seen that the Julia set of $C_n$ is preserved by $n$-th order rotations about the origin.
%%%--------------------
%%%--------------------

\section{Concluding remarks}\label{S5_Con-rem}
The convergence of $C_n$ for even $n$ is not straightforward. Numerical evidence (see Table~\ref{tab:free_critical_point_convergence}) indicates that the free critical points are in the basin of $0$ for all even $n \leq 112$. Although not included in the table, our computations in fact show that the forward orbits of all the free critical points are in $ \mathcal{A}_0$ for $52 \leq n  \leq 98$, whereas for larger values of $n$, the forward orbit of a free critical point does not enter $\mathcal{A}_0$ even after $10^{7}$  iterations. Some indicative values are presented in Table~\ref{tab:free_critical_point_convergence}.

  Based on these observations, together with theoretical insights developed in this article, we propose the following question for further investigation:
\textit{Is the map $C_n$ convergent for every even $n \geq 18$?}
\begin{table}[h!]
	\centering
	\renewcommand{\arraystretch}{1.3}
	\begin{tabular}{|c|c|c|c|c|}
		\hline
		$n$ & Free critical point $c$ & $|x^*|$ & $k$ & Is $|C_n^k(c)|<|x^*|$?\\
		\hline
		$18$  & $0.8008396 + 0.1412096 i$ & $0.7870328$ & $2$      & Yes \\
		\hline
		$20$  & $0.8162204 + 0.1292766 i$ & $0.7906919$ & $3$      & Yes \\
		\hline
		$22$  & $0.8291154 + 0.1192088 i$ & $0.7971412$ & $8$      & Yes \\
		\hline
		$24$  & $0.8401075 + 0.1106022 i$ & $0.8041752$ & $11$     & Yes \\
		\hline
		$26$  & $0.8496070 + 0.1031610 i$ & $0.8111648$ & $9$      & Yes \\
		\hline
		$28$  & $0.8579114 + 0.0966634 i$ & $0.8178806$ & $11$     & Yes \\
		\hline
		$30$  & $0.8652426 + 0.0909406 i$ & $0.8242398$ & $13$     & Yes \\
		\hline
		$32$  & $0.8717695 + 0.0858618 i$ & $0.8302206$ & $16$     & Yes \\
		\hline
		$34$  & $0.8776230 + 0.0813237 i$ & $0.8358292$ & $18$     & Yes \\
		\hline
		$36$  & $0.8829068 + 0.0772443 i$ & $0.8410839$ & $21$     & Yes \\
		\hline
		$38$  & $0.8877036 + 0.0735572 i$ & $0.8460079$ & $95$     & Yes \\
		\hline
		$40$  & $0.8920805 + 0.0702082 i$ & $0.8506257$ & $26$     & Yes \\
		\hline
		$42$  & $0.8960926 + 0.0671528 i$ & $0.8549616$ & $29$     & Yes \\
		\hline
		$44$  & $0.8997855 + 0.0643539 i$ & $0.8590383$ & $32$     & Yes \\
		\hline
		$46$  & $0.9031975 + 0.0617804 i$ & $0.8628770$ & $35$     & Yes \\
		\hline
		$48$  & $0.9063606 + 0.0594060 i$ & $0.8664970$ & $38$     & Yes \\
		\hline
		$50$  & $0.9093022 + 0.0572084 i$ & $0.8699160$ & $41$     & Yes \\
		\hline
		$100$ & $0.9476911 + 0.0297824 i$ & $0.9190445$ & $143$    & Yes \\
		\hline
		$110$ & $0.9515649 + 0.0271840 i$ & $0.9244602$ & $151$    & Yes \\
		\hline
		$112$ & $0.9522659 + 0.0267180 i$ & $0.9254492$ & $155$    & Yes \\
		\hline
		$114$ & $0.9529452 + 0.0262677 i$ & $0.9264102$ & $10^{7}$ & No \\
 		\hline
 		$116$ & $0.9536036 + 0.0258324 i$ & $0.9273442$ &  $10^{7}$ & No \\
%		\hline
%		$118$ & $0.954242 + 0.0254114 i$ & $0.9282525$ &  $10^{7}$ & No \\
		\hline
		$120$ & $0.9548622 + 0.0250039 i$ & $0.9291361$ &  $10^{7}$ & No \\
		\hline
		$150$ & $0.9623706 + 0.0201588 i$ & $0.9400130$ &  $10^{7}$ & No \\
		\hline
		$200$ & $0.9703066 + 0.0152428 i$ & $0.9518609$ &  $10^{7}$ & No \\
		\hline
		$300$ & $0.9788218 + 0.0102506 i$ & $0.9649923$ &  $10^{7}$ & No \\
		\hline
		$400$ & $0.9833816 + 0.0077236 i$ & $0.9722134$ &  $10^{7}$ & No \\
		\hline
		$500$ & $0.9862501 + 0.0061969 i$ & $0.9768289$ &  $10^{7}$ & No \\
		\hline
		$1000$ & $0.9924205 + 0.0031178 i$ & $0.9869703$ &  $10^{7}$ & No \\
		\hline
	\end{tabular}
%	\caption{For even $n$ with $n\geq 18$, the free critical point and the $2$-periodic point closest to the origin are denoted by $c$ and $x^*$, respectively, and $k$ is the number of iterations.}
	\caption{For even $n$ with $18 \leq n\leq 50$ and $n=100,110,112$, the free critical point and the $2$-periodic point closest to the origin are denoted by $c$ and $x^*$, respectively.  The first, second  and third columns contain the values of $n$, $c$ and $|x^*|$, respectively, whereas the fourth column gives the number of iterations used to determine whether  $C_n ^k (c) \in (-x^*, x^*)$ or not.  }
	\label{tab:free_critical_point_convergence}
\end{table}

Finally, we present a  comparison of Newton's method $N_n$, Halley's method $H_n$, and Chebyshev's method $C_n$, all applied to the polynomial $p_n (z)=z(z^n-1)$ in Table~\ref{tab:methods_comparison}. The information on Newton's and Halley's methods is taken from \cite{HSS2001} and \cite{CGJ2025,halley-liu-etal-2025}, respectively. The formula of $C_n$ is given by Equation~\eqref{eq:Cp2}, whereas
$$N_n(z)=\frac{nz^{n+1}}{(n+1)z^n-1}~~\text{and}~~H_{n}(z)=\frac{n z^{n+1}\left((n+1) z^{n}+(n-1)\right)}{(n+2)(n+1) z^{2 n}+(n+1)(n-4) z^{n}+2}.$$
\begin{table}[htbp]
	\centering
	\begin{tabular}{|p{2.25cm}| p{4.25cm}| p{4.25cm}| p{4.25cm}|}
		% \toprule
		\hline
		\textbf{Description} & \textbf{Newton's method $N_n$} & \textbf{Halley's method $H_n$} & \textbf{Chebyshev's method $C_n$} \\
		% \midrule
		\hline
		\textsl{Degree} & $n+1$ & $2n+1$ & $3n+1$\\
		\hline
		\textsl{Immediate basins corresponding to the non-zero roots of $p_n$} &
		All are unbounded. &
		All are unbounded \cite[Theorem~A]{CGJ2025}.\par   &
		All are unbounded.\par
		\\
		\hline
		\textsl{Immediate basin $\mathcal{A}_0$ of $0$} &  Always unbounded. &  Unbounded if and only if $n \leq 5$ \cite[Theorem~A]{CGJ2025}. & Unbounded if  and only if $n \leq 16$. \\
		% \midrule
		% \midrule
		
		\hline
		\textsl{Complete invariance of   $\mathcal{A}_0$} &
		Always completely invariant. Moreover, $0$ is an exceptional point, hence  $N_n$ is conjugate to a polynomial. &
		Completely invariant if and only if $n \le 5$. But $0$ is not an exceptional point \cite[Proposition~3.6]{CGJ2025}. &
		Never completely invariant, as $C_n: \mathcal{A}_0 \to \mathcal{A}_0 $ has degree $n+1$ or $2n+1$, depending on whether the free critical points are in $\mathcal{A}_0$ or not.\\
		% \midrule
		\hline
		\textsl{Free critical points} &
		No free critical points, giving that $N_n$ is convergent. &
		Free critical points are solutions of $z^n = -\frac{2(n-1)}{(n+1)(n+2)}.$ 
		\par $H_n$ is convergent whenever $n \leq 5$. It is believed but not yet proved that $H_n$ is convergent if $n$ is odd \cite[Theorem~B]{halley-liu-etal-2025}. &
		Free critical points are solutions of $z^n = -\frac{n-1}{(n+1)(2n+1)}.$ 
		\par $C_n$ is convergent whenever $n \leq 16$ or $n$ is odd. For $n \geq 17$, it is not yet proved or disproved. \\
		% \bottomrule
		\hline
		\textsl{Hyperbolic} &
		Always hyperbolic. &   If $H_n$ is convergent, or has no parabolic domain, then $H_n$ is a hyperbolic map. &
		As the poles of $C_n$ are critical points of $C_n$, $C_n$ is never a hyperbolic map. \\
		% \bottomrule
		\hline
	\end{tabular}
	% \vspace{10pt} % Adds a small, clean gap between the table bottom rule and the caption
	\caption{Comparison of Newton's, Halley's, and Chebyshev's methods when applied to $z(z^n -1)$.}
	\label{tab:methods_comparison}
\end{table}

	\section*{Declarations}
	\subsection*{Acknowledgements}  Pooja Phogat is funded by a Senior Research Fellowship (Grant number: 09/1059(0031)/2020-EMR-I) provided by the Council of Scientific and Industrial Research, Govt. of India.  
	
	\subsection*{Conflict of Interests}
	The authors declare that they have no conflict of interest, regarding the publication of this paper.
	
	\subsection*{Data Availability Statement}
	The authors declare that this research is purely theoretical and is not associated with any data.
	%%%%%%%%%%%%%%%%%%%%%%%%%%%%%%%
	%\newpage
	\FloatBarrier

\end{document}